\tikzstyle{BlueLine}=[thick,color=blue,text=black]
\tikzstyle{BluePoly}=[BlueLine,fill=blue!20]
\tikzstyle{RedLine}=[thick,color=red,text=black]
\tikzstyle{RedPoly}=[RedLine,fill=red!20]
\tikzstyle{GreenLine}=[thick,color=black!30!green,text=black]
\tikzstyle{OrangeLine}=[thick,color=orange]
\tikzstyle{GrayLine}=[thick,color=gray]
\tikzstyle{GrayPoly}=[GrayLine,fill=gray!20]
\tikzstyle{dot}=[shape=circle,draw,color=black,fill=black,inner sep=1pt]
\tikzstyle{littledot}=[dot,inner sep=0.75pt]
\tikzstyle{disk}=[thick,shape=circle,draw,color=black,fill=yellow!20]
\tikzstyle{plate}=[thick,shape=rectangle,draw,color=black,fill=yellow!20,rounded corners,minimum size=1.1cm]
\newcommand{\makepoints}{ 
  \foreach \n in {1,...,4} {
    \coordinate (\n) at (\n*-90-135:0.5cm);
  }; 
}
\newcommand{\drawpoints}{ 
  \foreach \n in {1,...,4} {
    \draw (\n) node [littledot] {};
  };
}
\newcommand{\makepent}{ 
  \foreach \n in {1,...,5} {
    \coordinate (\n) at (\n*-72+162:0.5cm);
  };
}
\newcommand{\drawpent}{ 
  \foreach \n in {1,...,5} {
    \draw (\n) node [littledot] {};
  };
}
\theoremstyle{plain}
\newtheorem{thm}{Theorem}[section]
\newtheorem{prop}[thm]{Proposition}
\newtheorem*{conj}{Conjecture}
\newtheorem{thma}{Theorem}
\theoremstyle{definition}
\newtheorem{defin}[thm]{Definition}
\theoremstyle{remark}
\newtheorem*{rem}{Remark}
\newcommand{\NC}{\textsc{NC}}
\newcommand{\PF}{\textsc{PF}}
\newcommand{\RF}{\textsc{RF}}
\newcommand{\pos}{\textsc{Poset}}
\newcommand{\sym}{\textsc{Sym}}
\begin{document}

\title{A Decomposition of Parking Functions by Undesired Spaces}
\author[M. Bruce]{Melody Bruce}
\author[M. Dougherty]{Michael Dougherty}
\author[M. Hlavacek]{Max Hlavacek}
\author[R. Kudo]{Ryo Kudo}
\author[I. Nicolas]{Ian Nicolas}
\date{\today}

\begin{abstract}
  There is a well-known bijection between parking functions of a fixed length and maximal chains of the noncrossing partition lattice which we can use to associate to each set of parking functions a poset whose Hasse diagram is the union of the corresponding maximal chains. We introduce a decomposition of parking functions based on the largest number omitted and prove several theorems about the corresponding posets. In particular, they share properties with the noncrossing partition lattice such as local self-duality, a nice characterization of intervals, a readily computable M\"obius function, and a symmetric chain decomposition. We also explore connections with order complexes, labeled Dyck paths, and rooted forests.
\end{abstract}

\maketitle

\section*{Introduction}

An $n$-tuple of integers is called a \emph{parking function} if it can be rearranged so that its $i$th entry is at most $i$. We can then record which of the missing numbers (if any) is the largest, which gives a natural yet little-studied decomposition of parking functions into $n$ parts. These sets can be constructed recursively by noticing that any parking function for which $k<n$ is the largest missing number can be obtained from a parking function of length $n-1$ by adding an $n$, so we are particularly interested in the set of all parking functions of length $k$ which omit $k$. 

By utilizing the well-known correspondence between parking functions and noncrossing partitions \cite{stanley97}, we can view elements from our decomposition as maximal chains in the noncrossing partition lattice $\NC_{n+1}$. Focusing on chains which come from parking functions where $k$ is the largest missing number yields a poset $\pos(\PF_{n,k})$ with elements from $\NC_{n+1}$ (although this is not an induced subposet). From this viewpoint, we can describe the observation above by noting that  $\pos(\PF_{n,k})$ is the direct product of $\pos(\PF_{k,k})$ with a Boolean lattice of rank $n-k$. Further
investigation of these posets yields a surprising number of nice results. 

Following the example set by $\NC_{n+1}$, we present several theorems which underscore the appeal of these posets, the first of which depicts the stucture of their intervals.

\begin{thma}
  Each interval in $\pos(\PF_{n,k})$ is either of the form 
  \[\pos(\PF_{r,r}) \times \prod_{i=1}^j \NC_{m_i} \text{ or } \prod_{i=1}^j \NC_{m_i}.\]
\end{thma}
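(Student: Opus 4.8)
The plan is to reduce everything to the ``diagonal'' posets $\pos(\PF_{r,r})$ and then understand their intervals inside $\NC_{r+1}$. The first, easy, reduction uses the product decomposition recorded in the introduction: $\pos(\PF_{n,k})$ is isomorphic to $\pos(\PF_{k,k})$ times the Boolean lattice of rank $n-k$. Intervals in a direct product are products of intervals of the factors, and every interval of a Boolean lattice is again a Boolean lattice, hence a product of copies of $\NC_2$ (the one-element poset being the empty such product). So every interval of $\pos(\PF_{n,k})$ is isomorphic to $I \times \prod_i \NC_2$ for some interval $I$ of $\pos(\PF_{k,k})$, and it suffices to prove the statement for $\pos(\PF_{k,k})$ itself; any extra $\NC_2$ factors are then absorbed into the $\prod_{i=1}^j \NC_{m_i}$ term of the conclusion.

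The heart of the argument is the analysis of an interval $[\sigma,\tau]$ of $\pos(\PF_{k,k})$, realized as a non-induced subposet of $\NC_{k+1}$ via the bijection of \cite{stanley97}: the elements of $\pos(\PF_{k,k})$ are exactly the noncrossing partitions through which some maximal chain encoding a length-$k$ parking function omitting its largest value $k$ passes. The key structural claim I would establish is that the constraint ``omits $k$'' is \emph{local}: it restricts a maximal chain only near one distinguished ground vertex $v$ (equivalently, near the block that eventually absorbs $v$), and becomes vacuous once a chain has moved past that point. I would extract this from a direct study of how the ``largest omitted value'' statistic changes under the elementary block-merges along a maximal chain. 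Granting it, I would apply the well-known decomposition of intervals of $\NC_{k+1}$ into products of smaller noncrossing partition lattices, $[\sigma,\tau]_{\NC_{k+1}} \cong \prod_B \NC_{m_B}$ over the relevant blocks $B$, and observe that this isomorphism restricts to $\pos(\PF_{k,k})$ on every factor whose block avoids $v$; those factors stay honest $\NC_{m_B}$'s, since on them the parking-function condition imposes nothing.

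It then remains to treat the at most one factor whose block $B_0$ contains $v$. If the lower endpoint $\sigma$ already lies above the obstruction at $v$, this factor is also an ordinary noncrossing partition lattice and $[\sigma,\tau] \cong \prod_{i=1}^j \NC_{m_i}$. Otherwise I would show that the chain combinatorics restricted to $B_0$, together with the surviving piece of the ``omits the top'' condition, is exactly the defining data of $\pos(\PF_{r,r})$ for a value $r$ read off from $|B_0|$ and from how $\sigma$ and $\tau$ cut down to $B_0$. This identification I would prove by induction on $k$, the inductive step being precisely the assertion that ``parking function of length $r$ omitting its largest value,'' localized to a sub-block, is again of that form at the smaller scale. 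Combined with the previous paragraph, this gives $[\sigma,\tau] \cong \pos(\PF_{r,r}) \times \prod_{i=1}^j \NC_{m_i}$, which with the reduction of the first paragraph completes the proof.

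The main obstacle is the middle and final steps: pinning down the distinguished vertex $v$ that records the omitted value, proving that an interval can have at most one block constrained by it (this should follow from noncrossingness, but wants care about how $v$ meets the various blocks appearing in the interval decomposition, including those coming from the upper endpoint), and verifying that the constrained sub-interval is again one of the posets $\pos(\PF_{r,r})$ rather than some genuinely new poset. All of this rests on a hands-on understanding of the ``largest omitted value'' statistic under the covering relations of $\NC_{k+1}$; the same analysis could instead be run in the labeled-Dyck-path or rooted-forest models mentioned in the abstract, whichever makes the locality statement most transparent.
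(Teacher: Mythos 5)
Your opening reduction (via $\pos(\PF_{n,k}) \cong \pos(\PF_{k,k}) \times B_{n-k}$, absorbing Boolean factors as copies of $\NC_2$) and your localization of the constraint are essentially the paper's first steps: an edge of $\NC_{k+1}$ carries the label $k$ precisely when the singleton $\{k+1\}$ merges into the block containing $k$, so an interval $[\sigma,\tau]$ is constrained only when $\{k+1\}\in\sigma$ and $k\sim k+1$ in $\tau$; otherwise $[\sigma,\tau]_{\pos(\PF_{k,k})}=[\sigma,\tau]_{\NC_{k+1}}$ and the known interval structure of $\NC_{k+1}$ finishes that case. (Note the constraint is attached to the pair $k,k+1$, not to a single ground vertex $v$, so the one constrained factor is the block of $\tau$ containing both; this is recoverable from your sketch but should be said precisely.) In the constrained case the paper, like you, factors out that block $B$ of $\tau$: writing $|B|=l+1$ and standardizing, one gets $[\sigma,\tau]_{\pos(\PF_{k,k})} \cong [\sigma',\hat{1}]_{\pos(\PF_{l,l})} \times [\sigma,\tau']_{\NC_{k+1}}$, and the second factor is an honest product of noncrossing partition lattices.

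The genuine gap is in your final step. The constrained factor is the \emph{upper} interval $[\sigma',\hat{1}]$ in $\pos(\PF_{l,l})$ with $\sigma'$ a generally nontrivial partition having $\{l+1\}$ as a singleton; it is typically not a bare $\pos(\PF_{r,r})$ determined by $|B|$ and the trace of $\sigma$ (for instance, upper intervals in $\pos(\PF_{l,l})$ can carry extra $\NC_{m_i}$ factors of arbitrary size), and, more seriously, your proposed induction on $k$ has nothing to recurse on in the hardest case: when $\tau=\hat{1}$ and $\{k+1\}\in\sigma$ with $\sigma\neq\hat{0}$, the block $B$ is all of $[k+1]$, so $l=k$, $\sigma'=\sigma$, and ``localizing to $B$'' returns exactly the interval you started with. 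The paper closes this loop with an ingredient you never invoke: the explicit order-reversing automorphism $\rho$ of $\pos(\PF_{l,l})$ constructed in the proof of self-duality. Applying $\rho$ turns $[\sigma',\hat{1}]$ into the lower interval $[\hat{0},\rho(\sigma')]$, and since $\{l+1\}\in\sigma'$ forces $l\sim l+1$ in $\rho(\sigma')$, that lower interval splits block-by-block: the block of $\rho(\sigma')$ containing $l$ and $l+1$ contributes the $\pos(\PF_{r,r})$ factor and the remaining blocks contribute noncrossing partition lattices. Without this duality step (or some substitute analysis of upper intervals $[\sigma,\hat{1}]$ in $\pos(\PF_{k,k})$), your argument does not close, so as written the proposal is incomplete exactly where the theorem's content lies.
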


Similar to the story for $\NC_{n+1}$, we can leverage this result to see the rich symmetry exhibited by these posets.

\begin{thma}
  $\pos(\PF_{n,k})$ is locally self-dual.
\end{thma}

This result is pleasing in its own right, but we can also use it to prove several other facts about the structure of these posets. In particular, we find that each $\pos(\PF_{n,n})$ is irreducible under direct product and use this to prove that there are no non-trivial order-preserving automorphisms of $\pos(\PF_{n,n})$. Perhaps most interestingly, we use our findings to analyze the M\"obius function of our posets.

\begin{thma}
  The M\"obius function of $\pos(\PF_{n,k})$ is zero.
\end{thma}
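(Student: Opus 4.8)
The plan is to strip off the Boolean factors and then induct. Since the M\"obius function of a bounded poset is multiplicative under direct products, the identification $\pos(\PF_{n,k})\cong\pos(\PF_{k,k})\times B_{n-k}$ recorded in the introduction (with $B_{n-k}$ a Boolean lattice of rank $n-k$) gives
\[
\mu\bigl(\pos(\PF_{n,k})\bigr)=(-1)^{\,n-k}\,\mu\bigl(\pos(\PF_{k,k})\bigr),
\]
so it suffices to prove $\mu(\pos(\PF_{n,n}))=0$ for all $n\ge 2$, and I would argue this by induction on $n$. The base case $n=2$ is immediate, since $\pos(\PF_{2,2})$ is a three-element chain.

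For the inductive step I would start from the defining recursion $\mu(\pos(\PF_{n,n}))=-\sum_{\hat 0\le x<\hat 1}\mu_{\pos(\PF_{n,n})}(\hat 0,x)$ and sort the terms according to Theorem~A. Each Hasse edge of $\pos(\PF_{n,n})$ is a cover relation of $\NC_{n+1}$, so $\pos(\PF_{n,n})$ inherits the rank function of $\NC_{n+1}$ and is graded of rank $n$; hence for $x<\hat 1$ the interval $[\hat 0,x]$ has rank at most $n-1$, and Theorem~A presents it as $\prod_i\NC_{m_i}$ or as $\pos(\PF_{r,r})\times\prod_i\NC_{m_i}$ with necessarily $r<n$. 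Because each $\pos(\PF_{r,r})$ and each $\NC_m$ is directly irreducible and no $\pos(\PF_{r,r})$ is isomorphic to any $\NC_m$, this decomposition is canonical, so the inductive hypothesis $\mu(\pos(\PF_{r,r}))=0$ together with multiplicativity annihilates every term of the second kind. Using the classical value $\mu(\NC_m)=(-1)^{m-1}C_{m-1}$, the step reduces to proving
\[
\sum_{x\in S}\ \prod_i(-1)^{\,m_i-1}C_{m_i-1}=0,
\qquad
S=\bigl\{\,x<\hat 1\ :\ [\hat 0,x]\cong\textstyle\prod_i\NC_{m_i}\,\bigr\}.
\]

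This vanishing is the crux, and the step I expect to be the real obstacle. The delicate point is that $\pos(\PF_{n,n})$ is not an induced subposet of $\NC_{n+1}$, so whether $x$ lies in $S$ — and, if so, what the multiset $\{m_i\}$ is — is dictated by the parking-function picture rather than by the block sizes of $x$. My approach would be to build a sign-reversing involution on $S$ by toggling whether the pair of circle elements that encodes the omission of the value $n$ (the ``undesired'' largest space) lies in one block: this should match an $x$ with $[\hat 0,x]\cong\prod_i\NC_{m_i}$ to an $x'$ with $[\hat 0,x']\cong\NC_2\times\prod_i\NC_{m_i}$, flipping the sign while leaving the remaining Catalan factors untouched, and pairing $S$ with itself.

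Local self-duality (Theorem~B) supplies a second lever: running the computation from $\hat 1$ downward writes $\mu(\pos(\PF_{n,n}))$ as the analogous sum over the family $S^{\ast}$ of upper intervals, and transporting $S$ to $S^{\ast}$ along the order-reversing isomorphism should either produce the desired pairing or force the common value to be its own negative. As a fallback one could instead try to show that the order complex of $(\hat 0,\hat 1)$ in $\pos(\PF_{n,n})$ is contractible — for instance via a closure operator that adjoins the distinguished pair and whose image has a least element — which would give $\mu=\tilde\chi=0$ directly. In every version of the argument the routine part is the bookkeeping with Theorem~A and multiplicativity, and the genuine work is pinning down the role of the omitted value $n$ inside the noncrossing-partition model.
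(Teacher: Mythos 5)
Your reduction is sound and runs parallel to the paper's own skeleton: multiplicativity removes the Boolean factor, the base case $\pos(\PF_{2,2})$ is a $3$-chain, and by Theorem A every term $\mu(\hat{0},x)$ whose lower interval carries a $\pos(\PF_{r,r})$ factor (equivalently, every $x$ with $n\sim_x n+1$) vanishes by induction. So the theorem comes down to showing $\sum_{x\in S}\prod_i(-1)^{m_i-1}C_{m_i-1}=0$, where concretely $S$ is the set of $x<\hat{1}$ in $\pos(\PF_{n,n})$ with $n\not\sim_x n+1$, i.e.\ all such $x\in\NC_{n+1}$ except the excluded copy $L_2$ of $\NC_{n-1}$ (those $x$ with $\{n+1\}$ a singleton and $1\sim n$). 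This residual cancellation is the entire content of the theorem, and your proposal does not prove it; you yourself flag it as the obstacle and offer only mechanisms that ``should'' work. As described, the involution cannot work: if the toggled pair is $\{n,n+1\}$, toggling leaves $S$ (the new partition either lies in $L_1$, hence outside the poset, or has a $\pos(\PF_{r,r})$ factor in its lower interval, whose term is already zero), so the nonzero terms of $S$ are paired against nothing; if the pair is anything else, merging or splitting it changes some $\NC_m$ to $\NC_{m\pm1}$, multiplying the term by a Catalan ratio rather than $-1$, and the toggle is not even defined on all of $S$ nor checked to preserve noncrossingness, membership in $\pos(\PF_{n,n})$, or avoidance of $L_2$. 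The self-duality ``lever'' is equally unexamined ($\rho$ exchanges lower and upper intervals but is not sign-reversing on $S$ in any evident way), and the fallback -- contractibility of the link -- is precisely the paper's open conjecture, strictly stronger than what you are trying to prove.

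For comparison, the paper finishes not with an involution but with bookkeeping: it identifies the surviving terms with values of $\mu_{\NC_{n+1}}(\hat{0},\cdot)$ and re-expresses their sum through the family $\{\pi:\{n+1\}\in\pi\}$, which is the full interval $[\hat{0},\{[n],\{n+1\}\}]\cong\NC_n$, over which $\mu_{\NC_{n+1}}(\hat{0},\cdot)$ sums to zero by the defining recursion, together with a correction indexed by $L_2$. Some concrete identity of this type is what your write-up is missing -- for instance, that $\sum\mu_{\NC_{n+1}}(\hat{0},\pi)$ over $\{\pi: n\not\sim_\pi n+1\}$ equals the same sum over $L_2$. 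A caution if you pursue this: the correct vanishing criterion is $n\sim_\pi n+1$, read off the lower interval as you do, and not the weaker condition $\{n+1\}\notin\pi$ appearing in the paper's wording; for example the atom $\{\{1,n+1\},\{2\},\ldots,\{n\}\}$ of $\pos(\PF_{n,n})$ has $\mu(\hat{0},\pi)=-1$ although $\{n+1\}\notin\pi$, so the bookkeeping at this step genuinely requires care. As it stands, your proposal is a correct reduction to the key identity, not a proof of the theorem.
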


If we consider the topology of these posets, this computation becomes circumstantial evidence for the homotopy type of the corresponding simplicial complex.

\begin{conj}
  The order complex of $\pos(\PF_{n,k})$ without its bounding elements is contractible.
\end{conj}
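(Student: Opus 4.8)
The plan is to reduce the general statement to the single family $\pos(\PF_{k,k})$ and then to deduce contractibility there by combining shellability with the M\"obius computation of Theorem C. For the reduction, recall from the introduction that $\pos(\PF_{n,k})\cong\pos(\PF_{k,k})\times B_{n-k}$, where $B_{n-k}$ is a Boolean lattice of rank $n-k$. For any two bounded posets $P$ and $Q$ there is a well-known homotopy equivalence between the order complex of $P\times Q$ with its minimum and maximum removed and the suspension of the join of the order complexes of the proper parts of $P$ and $Q$ (this appears, for instance, in work of Walker and in Bj\"orner's survey on topological methods in combinatorics). Since joining a contractible complex with any nonempty complex yields a contractible complex, and since suspension preserves contractibility, it suffices to prove that the order complex of $\pos(\PF_{k,k})$ without its bounding elements is contractible; the Boolean factor $B_{n-k}$ then contributes only harmless suspensions.

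For that base case I would induct on $k$. The small cases are immediate — for example $\pos(\PF_{2,2})$ is a three-element chain, whose proper part is a point — so assume $k$ is large. First observe that $\pos(\PF_{k,k})$ is graded of rank $k$: each of its covering relations is also a covering relation of $\NC_{k+1}$, hence of rank jump one, so every maximal chain has length $k$, and consequently the order complex of its proper part is pure of dimension $k-2$. The central point is that purity together with Theorem C forces contractibility as soon as we have shellability: a pure shellable complex of dimension $d$ is homotopy equivalent to a wedge of $d$-spheres, and the number of those spheres equals the absolute value of its reduced Euler characteristic, which here is $|\mu(\hat 0,\hat 1)|=0$. Thus it is enough to exhibit a shelling of $\pos(\PF_{k,k})$, say via an EL-labeling. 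Here the interval structure of Theorem A is the natural guide and consistency check: every open interval of $\pos(\PF_{k,k})$ is of the form $\prod_i\NC_{m_i}$ or $\pos(\PF_{r,r})\times\prod_i\NC_{m_i}$ with $r<k$, and since each $\NC_m$ is EL-shellable, since $\pos(\PF_{r,r})$ is EL-shellable by the inductive hypothesis, and since a product of EL-shellable posets is EL-shellable, one expects a lexicographic edge labeling built from the underlying parking functions to restrict compatibly to all intervals.

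The main obstacle is to produce the shelling itself — or, failing that, to construct directly a complete acyclic matching (a discrete Morse function whose only critical cell is a vertex) on the order complex of the proper part. What makes this delicate is that the phenomenon is genuinely global: every factor $\NC_{m_i}$ occurring in an interval has nonzero M\"obius number, so there is no cone point and no purely local collapse available, and the required contractibility must come from cancellation among the homologies of all the $\NC$-type intervals. The vanishing of the M\"obius function in Theorem C is precisely the numerical trace of that cancellation, which is why it is persuasive evidence; turning it into a geometric statement — either by a shelling order driven by the recursion $\pos(\PF_{n,k})\cong\pos(\PF_{k,k})\times B_{n-k}$ and the structure of the parking functions, or alternatively by a nerve-lemma argument that covers the complex by the closed stars of its coatoms (each a cone, hence contractible) and then analyzes the resulting nerve with the help of Theorem A — is where the real work lies.
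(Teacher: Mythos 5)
First, a point of orientation: the paper does not prove this statement at all --- it is stated explicitly as a conjecture, and the only evidence the authors offer is that the M\"obius function of $\pos(\PF_{n,k})$ vanishes (so by Philip Hall's theorem the link has reduced Euler characteristic $0$) together with low-dimensional computations. So you should not read your proposal as an alternative to a proof in the paper; the question is whether it closes the conjecture on its own, and it does not.

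Your reduction step is sound and is genuinely more than the paper says: Walker's product formula $\Delta(\overline{P\times Q})\simeq \Sigma\bigl(\Delta(\overline{P})*\Delta(\overline{Q})\bigr)$, applied to $\pos(\PF_{n,k})\cong\pos(\PF_{k,k})\times B_{n-k}$, correctly reduces the conjecture to the case $n=k$ (joins with a contractible factor and suspensions of contractible spaces are contractible, and the degenerate case $\overline{B_1}=\emptyset$ causes no trouble). The purity observation is also fine, since every covering relation of $\pos(\PF_{k,k})$ is a covering relation of $\NC_{k+1}$. But the argument then hinges entirely on exhibiting a shelling (or a complete acyclic matching) of the proper part of $\pos(\PF_{k,k})$, and this is precisely the step you leave open. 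Without it, the M\"obius computation gives nothing: a complex with vanishing reduced Euler characteristic need not be contractible (e.g.\ $S^1\vee S^2$), so ``pure $+$ $\widetilde{\chi}=0$'' only becomes contractibility after shellability (or Cohen--Macaulayness plus vanishing top homology) is established. Note also that the standard EL-labelings of $\NC_{k+1}$ do not obviously restrict, because $\pos(\PF_{k,k})$ is not an induced subposet of $\NC_{k+1}$ and is not a lattice, so the appeal to ``products of EL-shellable intervals'' via Theorem A is a plausibility check, not a construction --- EL-shellability is a property of a global edge labeling, not something that can be assembled interval-by-interval. In short, your write-up is a reasonable strategy document, and its first paragraph is a correct reduction, but the conjecture remains open both in the paper and in your proposal; the missing shelling/discrete Morse construction is the entire content of the problem.
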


In addition to these results, we can follow the decomposition of parking functions through to other common Catalan-type objects such as labeled Dyck paths and labeled rooted forests. In each of these settings, we illustrate the sets which correspond to our decomposition and find a natural way to view the recursion mentioned above, lending weight to the notion that this method illustrates some interesting information.

In the first section of this article, we review the basic definitions for parking functions and introduce the proposed decomposition. Section 2 is dedicated to the posets associated to our decomposition and our proofs of their properties. We discuss the implications regarding order complexes that our results appear to make in Section 3, and we illustrate our decomposition in the areas of labeled Dyck paths and labeled rooted forests in Section 4.

\setcounter{thma}{0}

\section{Background}

We begin by reviewing the basic definitions and introducing our proposed decomposition.

\begin{defin}
  An $n$-tuple of integers $(a_1,\ldots ,a_n)$ is a \emph{parking function} if there is a permutation $\sigma \in Sym_n$ such that $a_{\sigma(1)} \leq \ldots \leq a_{\sigma(n)}$ and $a_{\sigma(i)} \leq i$ for all $i\in [n]$, where $[n]=\{1,\ldots,n\}$. A parking function is called \emph{primitive} if it is already in weakly increasing order - these form the canonical representatives of the equivalence classes formed under the relation of permutation. A clever argument \cite{enum2} shows that there are $(n+1)^{n-1}$ parking functions of length $n$ and $C_n = \frac{1}{n+1}\binom{2n}{n}$ primitive parking functions, where $C_n$ is the $n$th \emph{Catalan number}. We denote the set of all parking functions of length $n$ by $\PF_n$. 
\end{defin}

With these basic properties out of the way, we can define our decomposition of $\PF_n$ into $n$ subsets.

\begin{defin}
\label{parking}
  Let $n,k\in \mathbb{N}$ with $1 < k \leq n$ and define $\PF_{n,k}$ to be the set of all parking functions of length $n$ for which $k$ is the largest missing number. That is,
  \[\PF_{n,k} = \{a \in \PF_n \mid k\not\in a \text{ but } k+1,\ldots , n \in a\}.\]
We can then identify the remaining parking functions with permutations of $[n]$ by thinking of each 
$(a_1,\ldots ,a_n)$ as the element in $\sym_n$ which sends $i$ to $a_i$. Then $\PF_n$ is the disjoint 
union
\[\PF_n = \sym_n \sqcup \PF_{n,2} \sqcup \ldots \sqcup \PF_{n,n}.\]
For example, we can decompose $\PF_3$ as follows:
\begin{align*}
\sym_3 &= \{(1,2,3),(1,3,2),(2,1,3),(2,3,1),(3,1,2),(3,2,1)\} \\
\PF_{3,2} &= \{(1,1,3),(1,3,1),(3,1,1)\} \\
\PF_{3,3} &= \{(1,1,1), (1,1,2), (1,2,1), (2,1,1), (1,2,2), (2,1,2), (2,2,1)\}
\end{align*}
\end{defin}

\begin{rem}
  There is a recursive structure to these subsets: given an element of $\PF_{n,k}$, we can insert $``n+1"$ to create $n+1$ different elements of $\PF_{n+1,k}$. In fact, this is the only way to create elements in $\PF_{n+1,k}$, which reduces some of our work to the case when $k=n$, where we have 
  \[\PF_{n,n} = \{a \in \PF_n \mid n \not\in a\}.\]
  In many of our proofs for properties of $\PF_{n,k}$, it will suffice to demonstrate them for $\PF_{n,n}$.
\end{rem}

\begin{prop}
  The number of parking functions of length $n$ for which $k$ is the largest missing number is
  \[|\PF_{n,k}| = \frac{n!}{k!}((k+1)^{k-1} - k^{k-1}).\]
\end{prop}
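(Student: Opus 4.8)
The plan is to separate the statement into two pieces: first reduce to the diagonal case via the recursion already noted in the Remark, and then compute $|\PF_{k,k}|$ by complementary counting inside $\PF_k$.

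For the reduction, I would argue by induction on $n-k$ that $|\PF_{n,k}| = \tfrac{n!}{k!}\,|\PF_{k,k}|$. The base case $n=k$ is trivial. For the inductive step, suppose $n > k$ and let $a \in \PF_{n,k}$. Since $n \in a$ but two entries equal to $n$ would force, in sorted order, two of the last entries to be $n > n-1$, exactly one entry of $a$ equals $n$; deleting it yields a tuple $a'$ of length $n-1$ which is still a parking function (the sorted subsequence still satisfies $b_i \le i$), still contains $k+1,\dots,n-1$, and still omits $k$, so $a' \in \PF_{n-1,k}$. Conversely, inserting an ``$n$'' into any of the $n$ positions of an element of $\PF_{n-1,k}$ lands in $\PF_{n,k}$, and these two operations are mutually inverse once we record the insertion position. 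Hence $|\PF_{n,k}| = n\,|\PF_{n-1,k}|$, and the induction gives the claimed formula.

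It then remains to show $|\PF_{k,k}| = (k+1)^{k-1} - k^{k-1}$. Since $\PF_{k,k} = \{a \in \PF_k \mid k \notin a\}$ and $|\PF_k| = (k+1)^{k-1}$, this is equivalent to showing that the number of parking functions of length $k$ that \emph{do} contain $k$ equals $k^{k-1}$. By the same argument as above, such a parking function has exactly one entry equal to $k$; deleting that entry produces a parking function of length $k-1$, and conversely inserting a ``$k$'' into one of the $k$ positions of any element of $\PF_{k-1}$ produces a parking function of length $k$ containing $k$. This gives a bijection between $\{a \in \PF_k \mid k \in a\}$ and $[k] \times \PF_{k-1}$, so the count is $k\cdot|\PF_{k-1}| = k\cdot k^{k-2} = k^{k-1}$, as desired.

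I do not anticipate a serious obstacle here; the only points requiring care are the two ``exactly one copy of the top value'' observations and the verification that deletion preserves the parking-function property, both of which follow immediately from the sorted-order characterization in Definition~\ref{parking}. One should also note the hypothesis $k \ge 2$ ensures $k-1 \ge 1$ so that $|\PF_{k-1}| = k^{k-2}$ is valid (with $\PF_1 = \{(1)\}$ giving the base value $1$ when $k=2$), and a quick sanity check against the $\PF_3$ decomposition in Definition~\ref{parking} confirms $|\PF_{3,3}| = 4^2 - 3^2 = 7$ and $|\PF_{3,2}| = \tfrac{3!}{2!}(3^1 - 2^1) = 3$.
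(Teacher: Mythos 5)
Your proof is correct and follows essentially the same route as the paper: it uses the recursion $|\PF_{n,k}| = n\,|\PF_{n-1,k}|$ (the insertion/deletion of the top value, which the paper draws from its Remark) to reduce to $|\PF_{k,k}|$, and then the same complementary count $|\PF_{k,k}| = (k+1)^{k-1} - k\cdot k^{k-2}$. The only difference is that you spell out the "exactly one copy of the top value" and deletion-preserves-parking details that the paper leaves implicit.
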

\begin{proof}
  By the remark above, we know that $|\PF_{n,k}| = n|\PF_{n-1,k}|$. Then \[|\PF_{n,k}| = \frac{n!}{k!} |\PF_{k,k}|\] so it suffices to compute $|\PF_{k,k}|$. We know that $|\PF_k| = (k+1)^{k-1}$, and since every parking function with a $k$ can be obtained uniquely from $\PF_{k-1}$ by inserting a $k$, there are $k|\PF_{k-1}| = k\cdot k^{k-2}$ parking functions in $\PF_k$ with a $k$. Hence,
  \[|\PF_{k,k}| = (k+1)^{k-1} - k^{k-1}\]
  and the claim follows.
\end{proof}

\section{Connections with $\NC_n$}

In addition to deserving study in their own right, parking functions have an intimate relationship with the lattice of noncrossing partitions. This setting proves an interesting one to consider our decomposition of $\PF_n$.

\begin{figure}
  \begin{tikzpicture}
    \begin{scope}[shift={(-2,0)},BluePoly]
      \node () [disk,minimum size=3.2cm] {};
      \foreach \n in {1,...,6} {
        \coordinate (\n) at (\n*-60-120:1cm);
        \node[black] () at (\n*-60-120:1.3cm) {\n};
      }
      \filldraw (1) -- (2) -- (3) -- cycle;
      \draw (4) -- (6);
      \foreach \n in {1,...,6} {
        \draw (\n) node [dot] {};
      }
    \end{scope}

    \begin{scope}[shift={(2,0)},BluePoly]
      \node () [disk,minimum size=3.2cm] {};
      \foreach \n in {1,...,6} {
        \coordinate (\n) at (\n*-60-120:1cm);
        \node[black] () at (\n*-60-120:1.3cm) {\n};
      }
      \filldraw (5) -- (2) -- (3) -- cycle;
      \draw (4) -- (6);
      \foreach \n in {1,...,6} {
        \draw (\n) node [dot] {};
      }
    \end{scope}
  \end{tikzpicture}
  \caption{A noncrossing partition and a crossing partition}
   \label{noncrossing}
\end{figure}
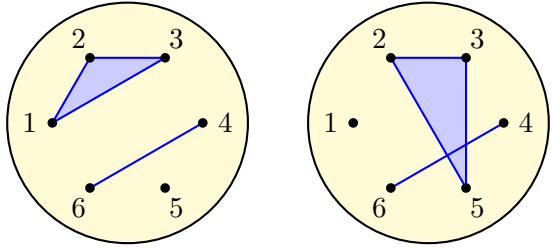

\begin{defin}
  A partition $\sigma$ of $[n]$ is said to be \emph{noncrossing} if, when we consider the elements of 
  $[n]$ arranged in clockwise ascending order around a circle, the convex hulls of the blocks of 
  $\sigma$ are disjoint (Figure \ref{noncrossing}). The poset of all such partitions (a subposet of the
   partition lattice $\Pi_n$) is called the \emph{noncrossing partition lattice} and is denoted $\NC_n$
    (Figure \ref{ncn}). 
  It is well-known that $|\NC_n| = C_n$, the $n$th Catalan number. In addition, the number of maximal chains in $\NC_{n+1}$ (paths from bottom to top in its Hasse diagram) is $(n+1)^{n-1}$.
\end{defin}

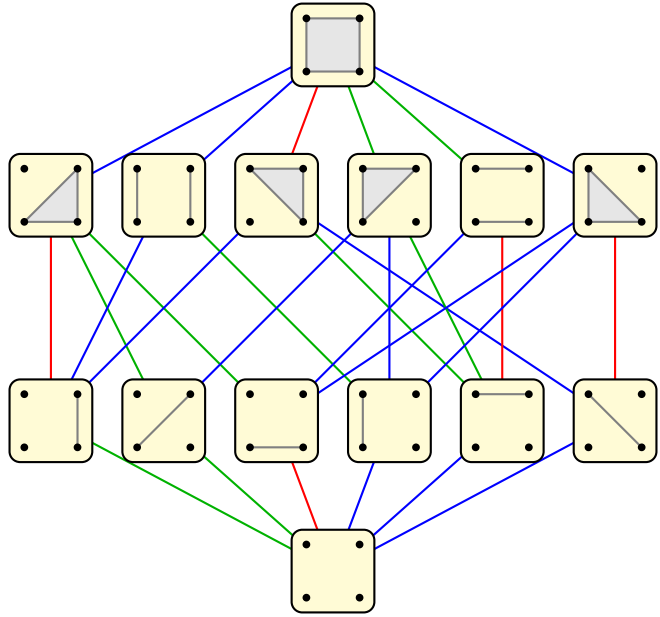
\begin{figure}
\vspace{1em}
\begin{tikzpicture}
	\begin{scope}
		\coordinate (top) at (0,0);
		\foreach \i in {1,...,6} \coordinate (a\i) at (-5.25+\i*1.5,-2);
		\foreach \i in {1,...,6} \coordinate (b\i) at (-5.25+\i*1.5,-5);
		\coordinate (bottom) at (0,-7);
	\end{scope}
	
	\draw[BlueLine] (top) -- (a1);
	\draw[BlueLine] (top) -- (a2);
	\draw[RedLine] (top) -- (a3);
	\draw[GreenLine] (top) -- (a4);
	\draw[GreenLine] (top) -- (a5);
	\draw[BlueLine] (top) --(a6);
		
	\draw [RedLine] (a1) -- (b1);
	\draw [GreenLine] (a1) -- (b2);
	\draw [GreenLine] (a1) -- (b3);
	\draw [BlueLine] (a2) -- (b1);
	\draw [GreenLine] (a2) -- (b4);
	\draw [BlueLine] (a3) -- (b1);
	\draw [GreenLine] (a3) -- (b5);
	\draw [BlueLine] (a3) -- (b6);
	\draw [BlueLine] (a4) -- (b2);
	\draw [BlueLine] (a4) -- (b4);
	\draw [GreenLine] (a4) -- (b5);
	\draw [BlueLine] (a5) -- (b3);
	\draw [RedLine] (a5) -- (b5);
	\draw [BlueLine] (a6) -- (b3);
	\draw [BlueLine] (a6) -- (b4);
	\draw [RedLine] (a6) -- (b6);
		
	\draw [GreenLine] (b1) -- (bottom);
	\draw [GreenLine] (b2) -- (bottom);
	\draw [RedLine] (b3) -- (bottom);
	\draw [BlueLine] (b4) -- (bottom);
	\draw [BlueLine] (b5) -- (bottom);
	\draw [BlueLine] (b6) -- (bottom); 
	
	\begin{scope}
		\node[plate] (top) at (0,0) {};
		\foreach \i in {1,...,6} \node[plate] (a\i) at (-5.25+\i*1.5,-2) {};
		\foreach \i in {1,...,6} \node[plate] (b\i) at (-5.25+\i*1.5,-5) {};
		\node[plate] (bottom) at (0,-7) {};
	\end{scope}
	
	\begin{scope}[GrayPoly]
	\begin{scope}[shift={(0,0)}] \makepoints \filldraw (1) -- (2) -- (3) -- (4) -- cycle; \drawpoints \end{scope}
	
	\begin{scope}[shift={(-3.75,-2)}] \makepoints \filldraw (2) -- (3) -- (4)-- cycle; \drawpoints \end{scope}
	\begin{scope}[shift={(-2.25,-2)}] \makepoints \draw (1) -- (4); \draw (2) -- (3); \drawpoints \end{scope}
	\begin{scope}[shift={(-0.75,-2)}] \makepoints \filldraw (1) -- (2) -- (3) -- cycle; \drawpoints \end{scope}
	\begin{scope}[shift={(0.75,-2)}] \makepoints \filldraw (1) -- (2) -- (4) -- cycle; \drawpoints \end{scope}
	\begin{scope}[shift={(2.25,-2)}] \makepoints \draw (1) -- (2); \draw (3) -- (4); \drawpoints \end{scope}
	\begin{scope}[shift={(3.75,-2)}] \makepoints \filldraw (1) -- (3) -- (4) -- cycle; \drawpoints \end{scope}
	
	\begin{scope}[shift={(-3.75,-5)}] \makepoints \draw (2) -- (3); \drawpoints \end{scope}
	\begin{scope}[shift={(-2.25,-5)}] \makepoints \draw (2) -- (4); \drawpoints \end{scope}
	\begin{scope}[shift={(-0.75,-5)}] \makepoints \draw (3) -- (4); \drawpoints \end{scope}
	\begin{scope}[shift={(0.75,-5)}] \makepoints \draw (1) -- (4); \drawpoints \end{scope}
	\begin{scope}[shift={(2.25,-5)}] \makepoints \draw (1) -- (2); \drawpoints \end{scope}
	\begin{scope}[shift={(3.75,-5)}] \makepoints \draw (1) -- (3); \drawpoints \end{scope}
	
	\begin{scope}[shift={(0,-7)}] \makepoints \drawpoints \end{scope}
	\end{scope}
\end{tikzpicture}
\caption{The noncrossing partition lattice $\NC_4$, where the upper-left vertex is labeled $1$ and proceeds clockwise.}
\label{ncn}
\end{figure}

The reappearance of $(n+1)^{n-1}$ is not a coincidence - the connection which proves this is essential to 
our results.

\begin{thm}
  There is a one-to-one correspondence between parking functions of length $n$ and maximal chains of $\NC_{n+1}$.
\end{thm}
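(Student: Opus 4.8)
The plan is to build the bijection explicitly through the standard edge‑labelling of $\NC_{n+1}$, check that it carries maximal chains to parking functions, and then promote it to a bijection using the fact, already recorded above, that both $\PF_n$ and the set of maximal chains of $\NC_{n+1}$ have size $(n+1)^{n-1}$.

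First I would analyse a single covering step. Since $\NC_{n+1}$ is graded with bottom $\hat 0$ (all singletons), top $\hat 1$ (one block), and rank function $\pi \mapsto (n+1)-|\pi|$, every maximal chain $\hat 0 = \pi_0 \lessdot \pi_1 \lessdot \cdots \lessdot \pi_n = \hat 1$ has exactly $n$ steps, each merging two blocks $B, B'$ of $\pi_{i-1}$ into one block of $\pi_i$. Taking $B$ to be the block containing $\min(B\cup B')$, the noncrossing condition forces $B'$ to lie on the arc of the circle strictly between two cyclically consecutive elements of $B$: otherwise some element of $B$ would fall strictly between two elements of $B'$, or an element of $B'$ would fall on the ``wrong'' side of $B$, and the two convex hulls would meet. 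Let $c$ be the largest element of $B$ smaller than $\min B'$ (this exists because $\min B < \min B'$); I would record the step by the edge $\{c,\min B'\}$ and the label $\lambda_i := c \in \{1,\dots,n\}$, and assign to the chain the $n$‑tuple $\Phi(\pi_\bullet)=(\lambda_1,\dots,\lambda_n)$.

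Next I would check that $\Phi(\pi_\bullet)$ is a parking function. Let $T$ be the graph on $\{1,\dots,n+1\}$ carrying the $n$ recorded edges. A short induction shows the blocks of $\pi_i$ are exactly the connected components of the first $i$ edges, so each new edge joins two distinct components; hence $T$ is acyclic, and since $\pi_n=\hat 1$ it is connected, so $T$ is a tree on $n+1$ vertices with $n$ distinct edges. For each $j\in[n]$, the edges of $T$ with both endpoints in $\{j+1,\dots,n+1\}$ induce a forest on those $n+1-j$ vertices, hence number at most $n-j$; therefore at least $j$ edges meet $\{1,\dots,j\}$, i.e.\ have smaller endpoint $\le j$. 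Since $\lambda_i$ is, by construction, the smaller endpoint of the $i$th edge, we get $|\{\,i : \lambda_i \le j\,\}| \ge j$ for all $j$, which is exactly the condition for $(\lambda_1,\dots,\lambda_n)$ to be a parking function.

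It remains to see that $\Phi$ is a bijection, and since $|\PF_n| = (n+1)^{n-1}$ is the number of maximal chains of $\NC_{n+1}$ it suffices to prove $\Phi$ is injective. I expect this to be the main obstacle: $\pi_{i-1}$ together with $\lambda_i$ does \emph{not} in general determine $\pi_i$, because the arc after $\lambda_i$ may contain several outermost blocks, any of which could be absorbed with the same resulting label, so the chain cannot be recovered greedily from its label word. I would instead invert $\Phi$ by induction on $n$, peeling off the point $n+1$: in any maximal chain there is a unique step $t$ at which the singleton $\{n+1\}$ is first merged, necessarily into a block $B$ with $\lambda_t = \max B$, and deleting $n+1$ from every $\pi_i$ collapses the chain to a maximal chain of $\NC_n$ whose label word is $(\lambda_1,\dots,\lambda_n)$ with its $t$th entry erased. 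Conversely, given a maximal chain of $\NC_n$ and a prospective parking function $w$ of length $n$, the position $t$ of reinsertion and the value $\lambda_t$ are constrained (only the ``visible'' blocks along the arc near $n\,|\,1$ may absorb $n+1$, and $\lambda_t$ must be the maximum of such a block), and a count of the admissible reinsertions reproduces the recursion behind $|\PF_n|$ — equivalently the relation $|\PF_{n,k}| = n\,|\PF_{n-1,k}|$ from the previous proposition. Carrying this induction through shows that every length‑$n$ parking function is $\Phi$ of a unique maximal chain of $\NC_{n+1}$, which is the assertion of the theorem.
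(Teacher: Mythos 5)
Your labelling is exactly the one the paper uses (the paper merely describes it and cites Stanley for bijectivity), and the first part of your argument is solid and in fact more detailed than anything in the paper: the recorded edges do form a tree on $[n+1]$ whose components after $i$ steps are the blocks of $\pi_i$, and your forest-counting argument correctly verifies the criterion $\#\{i:\lambda_i\le j\}\ge j$ for all $j$, which is equivalent to the sorted condition defining a parking function. Reducing bijectivity to injectivity via the chain count $(n+1)^{n-1}$ is also legitimate in context, since the paper records that count (and it is due to Kreweras, independently of parking functions).

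The genuine gap is the injectivity itself, which is the actual content of the theorem, and your sketch does not close it. If two chains have the same label word $w$ and absorb the singleton $\{n+1\}$ at steps $t_1$ and $t_2$, deleting $n+1$ yields chains of $\NC_n$ whose words are $w$ with the $t_1$th, respectively $t_2$th, entry erased; the inductive hypothesis only bites after you show that $w$ forces $t_1=t_2$ and forces the absorbing block, and nothing in your outline establishes this --- indeed you rejected the greedy reconstruction precisely because the word does not locally determine the merge. The fallback you propose, counting admissible reinsertions of $n+1$ into a maximal chain of $\NC_n$, cannot substitute for this: the count is not even constant over chains (for the three maximal chains of $\NC_3$ it is $6$, $4$, $6$, summing to the $16$ chains of $\NC_4$; note $3\nmid 16$), so it does not reproduce a per-chain recursion, and in any case a counting identity only recovers the total $(n+1)^{n-1}$ you are already assuming, saying nothing about fibers of $\Phi$ having size one. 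To finish you would need either an argument that the word determines the reinsertion data $(t,B)$, or a direct construction of the inverse (equivalently, surjectivity), which is what the paper's citation to Stanley supplies.
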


\begin{figure}
  \begin{tikzpicture}
    \begin{scope}[minimum size=1.5cm]
      \node[disk] (top) at (0,0) {};
      \node[disk] (a) at (2.25,0) {};
      \node[disk] (b) at (4.5,0) {};
      \node[disk] (c) at (6.75,0) {};
      \node[disk] (bottom) at (9,0) {};
    \end{scope}
    
    \begin{scope}[GrayPoly]
      \begin{scope}[shift={(9,0)}] \makepent \filldraw (1) -- (2) -- (3) -- (4) -- (5) -- cycle; \drawpent \end{scope}
      \begin{scope}[shift={(6.75,0)}] \makepent \filldraw (1) -- (2) -- (3) -- (5) -- cycle; \drawpent \end{scope}
      \begin{scope}[shift={(4.5,0)}] \makepent \draw (1) -- (2); \draw (3) -- (5); \drawpent \end{scope}
      \begin{scope}[shift={(2.25,0)}] \makepent \draw (3) -- (5); \drawpent \end{scope}
      \begin{scope}[shift={(0,0)}] \makepent \drawpent \end{scope}
    \end{scope}
    
    \begin{scope}[black,thick]
      \draw (top.east) -- (a.west) node [midway,above] {3};
      \draw (a.east) -- (b.west) node [midway,above] {1};
      \draw (b.east) -- (c.west) node [midway,above] {2};
      \draw (c.east) -- (bottom.west) node [midway,above] {3};
    \end{scope}
  \end{tikzpicture}
  \caption{A maximal chain in $\NC_5$ and the corresponding parking function $(3,1,2,3)$, where the top vertex is labeled $1$, proceeding clockwise.}
  \label{edgelabels}
\end{figure}
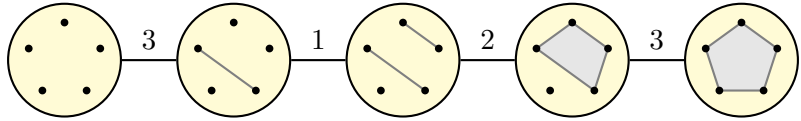

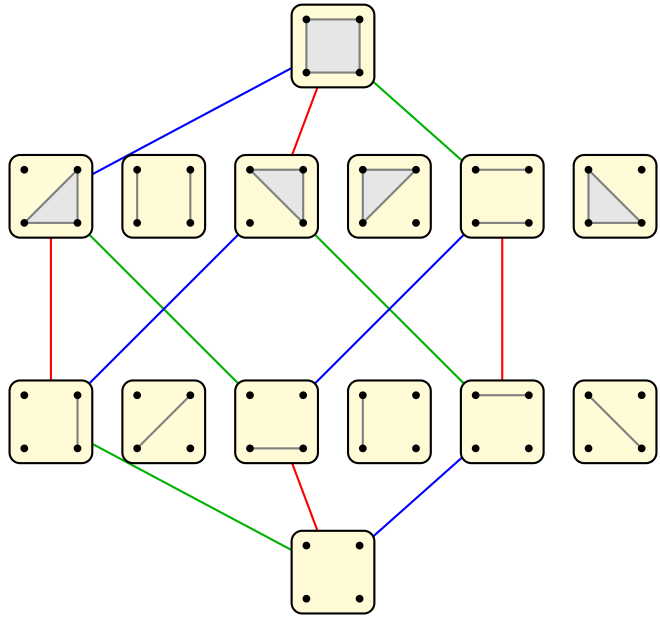
\begin{figure}
\begin{tikzpicture}
	\begin{scope}
		\coordinate (top) at (0,0);
		\foreach \i in {1,...,6} \coordinate (a\i) at (-5.25+\i*1.5,-2);
		\foreach \i in {1,...,6} \coordinate (b\i) at (-5.25+\i*1.5,-5);
		\coordinate (bottom) at (0,-7);
	\end{scope}
	
	\draw [BlueLine] (top) -- (a1);
	\draw [RedLine] (top) -- (a3);
	\draw [GreenLine] (top) -- (a5);
		
	\draw [RedLine] (a1) -- (b1);
	\draw [GreenLine] (a1) -- (b3);
	\draw [BlueLine] (a3) -- (b1);
	\draw [GreenLine] (a3) -- (b5);
	\draw [BlueLine] (a5) -- (b3);
	\draw [RedLine] (a5) -- (b5);
		
	\draw [GreenLine] (b1) -- (bottom);
	\draw [RedLine] (b3) -- (bottom);
	\draw [BlueLine] (b5) -- (bottom);
	
	\begin{scope}
		\node[plate] (top) at (0,0) {};
		\foreach \i in {1,...,6} \node[plate] (a\i) at (-5.25+\i*1.5,-2) {};
		\foreach \i in {1,...,6} \node[plate] (b\i) at (-5.25+\i*1.5,-5) {};
		\node[plate] (bottom) at (0,-7) {};
	\end{scope}
	
	\begin{scope}[GrayPoly]
	\begin{scope}[shift={(0,0)}] \makepoints \filldraw (1) -- (2) -- (3) -- (4) -- cycle; \drawpoints \end{scope}
	
	\begin{scope}[shift={(-3.75,-2)}] \makepoints \filldraw (2) -- (3) -- (4)-- cycle; \drawpoints \end{scope}
	\begin{scope}[shift={(-2.25,-2)}] \makepoints \draw (1) -- (4); \draw (2) -- (3); \drawpoints \end{scope}
	\begin{scope}[shift={(-0.75,-2)}] \makepoints \filldraw (1) -- (2) -- (3) -- cycle; \drawpoints \end{scope}
	\begin{scope}[shift={(0.75,-2)}] \makepoints \filldraw (1) -- (2) -- (4) -- cycle; \drawpoints \end{scope}
	\begin{scope}[shift={(2.25,-2)}] \makepoints \draw (1) -- (2); \draw (3) -- (4); \drawpoints \end{scope}
	\begin{scope}[shift={(3.75,-2)}] \makepoints \filldraw (1) -- (3) -- (4) -- cycle; \drawpoints \end{scope}
	
	\begin{scope}[shift={(-3.75,-5)}] \makepoints \draw (2) -- (3); \drawpoints \end{scope}
	\begin{scope}[shift={(-2.25,-5)}] \makepoints \draw (2) -- (4); \drawpoints \end{scope}
	\begin{scope}[shift={(-0.75,-5)}] \makepoints \draw (3) -- (4); \drawpoints \end{scope}
	\begin{scope}[shift={(0.75,-5)}] \makepoints \draw (1) -- (4); \drawpoints \end{scope}
	\begin{scope}[shift={(2.25,-5)}] \makepoints \draw (1) -- (2); \drawpoints \end{scope}
	\begin{scope}[shift={(3.75,-5)}] \makepoints \draw (1) -- (3); \drawpoints \end{scope}
	
	\begin{scope}[shift={(0,-7)}] \makepoints \drawpoints \end{scope}
	\end{scope}
\end{tikzpicture}
\caption{The maximal chains which form \pos($\sym_3$), i.e those whose labels are a permutation of $(1,2,3)$}
\end{figure}

Given a maximal chain in $\NC_{n+1}$, we produce a parking function by labeling its covering relations, i.e. the edges in the Hasse diagram. For each covering relation $\sigma < \tau$, the larger partition $\tau$ is obtained by joining two blocks $B_1$ and $B_2$ in $\sigma$ to form one block $B$ in $\tau$. Without loss of generality, suppose $B_1$ contains $\min B$; we will then label this edge by the largest number in $B_1$ which is less than all the elements in $B_2$ - see Figure \ref{edgelabels} for an example. Performing this process on each edge and reading the labels on a maximal chain from bottom to top creates a parking function of length $n$, and this map is a bijection. \cite{stanley97}

\begin{defin}
  By the correspondence above, each $\PF_{n,k}$ corresponds to a collection of maximal chains in $\NC_{n+1}$; define $\pos(\PF_{n,k})$ to be the poset whose Hasse diagram is the union of these chains in $\NC_{n+1}$.
\end{defin}

Notice that although $\pos(\PF_{n,k})$ is a subset of $\NC_{n+1}$, it is not an induced subposet in general since there are missing
relations in the smaller poset. In addition, $\pos(\PF_{n,k})$ is not usually a lattice, but there is still a 
nice structure here which deserves exploration.

Since our definition for $\pos(\PF_{n,k})$ is based on the maximum chains of $\NC_{n+1}$, it is not 
immediately clear which noncrossing partitions will appear. Focusing for a moment on 
when $n=k$, we can see that the elements of $\NC_{n+1}$ which do not lie in $\pos(\PF_{n,n})$ are
precisely those for which every maximal chain containing them has an $n$-label on one of its edges. 

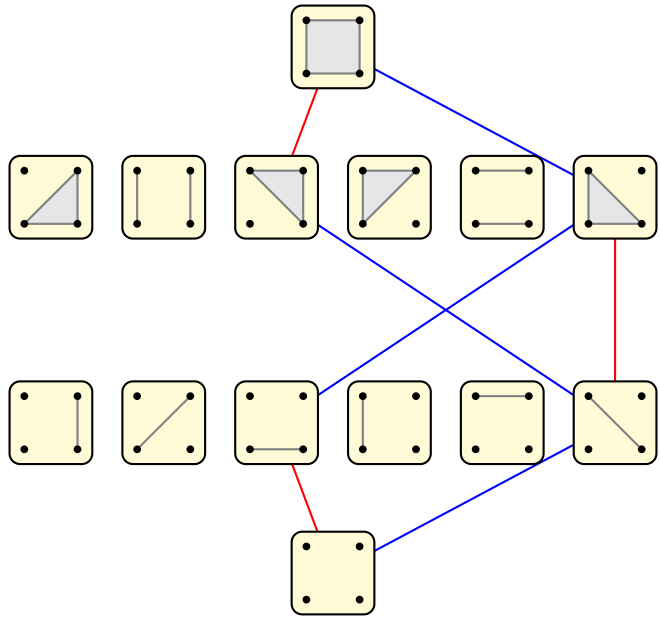
\begin{figure}
\begin{tikzpicture}	
	\begin{scope}
		\coordinate (top) at (0,0);
		\foreach \i in {1,...,6} \coordinate (a\i) at (-5.25+\i*1.5,-2);
		\foreach \i in {1,...,6} \coordinate (b\i) at (-5.25+\i*1.5,-5);
		\coordinate (bottom) at (0,-7);
	\end{scope}
	
	\draw [RedLine] (top.south) -- (a3.north);
	\draw [BlueLine] (top.south) --(a6.north);
		
	\draw [BlueLine] (a3.south) -- (b6.north);
	\draw [BlueLine] (a6.south) -- (b3.north);
	\draw [RedLine] (a6.south) -- (b6.north);
		
	\draw [RedLine] (b3.south) -- (bottom.north);
	\draw [BlueLine] (b6.south) -- (bottom.north); 
	
	\begin{scope}
		\node[plate] (top) at (0,0) {};
		\foreach \i in {1,...,6} \node[plate] (a\i) at (-5.25+\i*1.5,-2) {};
		\foreach \i in {1,...,6} \node[plate] (b\i) at (-5.25+\i*1.5,-5) {};
		\node[plate] (bottom) at (0,-7) {};
	\end{scope}	
	
	\begin{scope}[GrayPoly]
	\begin{scope}[shift={(0,0)}] \makepoints \filldraw (1) -- (2) -- (3) -- (4) -- cycle; \drawpoints \end{scope}
	
	\begin{scope}[shift={(-3.75,-2)}] \makepoints \filldraw (2) -- (3) -- (4)-- cycle; \drawpoints \end{scope}
	\begin{scope}[shift={(-2.25,-2)}] \makepoints \draw (1) -- (4); \draw (2) -- (3); \drawpoints \end{scope}
	\begin{scope}[shift={(-0.75,-2)}] \makepoints \filldraw (1) -- (2) -- (3) -- cycle; \drawpoints \end{scope}
	\begin{scope}[shift={(0.75,-2)}] \makepoints \filldraw (1) -- (2) -- (4) -- cycle; \drawpoints \end{scope}
	\begin{scope}[shift={(2.25,-2)}] \makepoints \draw (1) -- (2); \draw (3) -- (4); \drawpoints \end{scope}
	\begin{scope}[shift={(3.75,-2)}] \makepoints \filldraw (1) -- (3) -- (4) -- cycle; \drawpoints \end{scope}
	
	\begin{scope}[shift={(-3.75,-5)}] \makepoints \draw (2) -- (3); \drawpoints \end{scope}
	\begin{scope}[shift={(-2.25,-5)}] \makepoints \draw (2) -- (4); \drawpoints \end{scope}
	\begin{scope}[shift={(-0.75,-5)}] \makepoints \draw (3) -- (4); \drawpoints \end{scope}
	\begin{scope}[shift={(0.75,-5)}] \makepoints \draw (1) -- (4); \drawpoints \end{scope}
	\begin{scope}[shift={(2.25,-5)}] \makepoints \draw (1) -- (2); \drawpoints \end{scope}
	\begin{scope}[shift={(3.75,-5)}] \makepoints \draw (1) -- (3); \drawpoints \end{scope}
	
	\begin{scope}[shift={(0,-7)}] \makepoints \drawpoints \end{scope}
	\end{scope}
\end{tikzpicture}
\caption{The maximal chains which form $\pos(\PF_{3,2})$}
\end{figure}

A covering relation $\sigma < \tau$ in $\NC_{n+1}$ is labeled $n$ if and only if a block in $\tau$
is the union of the block containing $n$ in $\sigma$ and $\{n+1\} \in \sigma$. There are exactly two ways to guarantee that this will happen in every maximal chain passing through some
$\pi\in \NC_{n+1}$: either $\{n,n+1\}$ is a block in $\pi$ or $1$ and $n$ are in the same block in $\pi$
(written $1\sim n$ in $\pi$ or $1\sim_\pi n$) and $\{n+1\}$ is a block in $\pi$. This can be summarized in a proposition
as follows:

\begin{figure}
\begin{tikzpicture}
	\begin{scope}
		\coordinate (top) at (0,0);
		\foreach \i in {1,...,6} \coordinate (a\i) at (-5.25+\i*1.5,-2);
		\foreach \i in {1,...,6} \coordinate (b\i) at (-5.25+\i*1.5,-5);
		\coordinate (bottom) at (0,-7);
	\end{scope}
	
	\draw [BlueLine] (top.south) -- (a1.north);
	\draw [BlueLine] (top.south) -- (a2.north);
	\draw [GreenLine] (top.south) -- (a4.north);
	\draw [BlueLine] (top.south) --(a6.north);
		
	\draw [GreenLine] (a1.south) -- (b2.north);
	\draw [BlueLine] (a2.south) -- (b1.north);
	\draw [GreenLine] (a2.south) -- (b4.north);
	\draw [BlueLine] (a4.south) -- (b2.north);
	\draw [BlueLine] (a4.south) -- (b4.north);
	\draw [GreenLine] (a4.south) -- (b5.north);
	\draw [BlueLine] (a6.south) -- (b4.north);
		
	\draw [GreenLine] (b1.south) -- (bottom.north);
	\draw [GreenLine] (b2.south) -- (bottom.north);
	\draw [BlueLine] (b4.south) -- (bottom.north);
	\draw [BlueLine] (b5.south) -- (bottom.north);
	
	\begin{scope}
		\node[plate] (top) at (0,0) {};
		\foreach \i in {1,...,6} \node[plate] (a\i) at (-5.25+\i*1.5,-2) {};
		\foreach \i in {1,...,6} \node[plate] (b\i) at (-5.25+\i*1.5,-5) {};
		\node[plate] (bottom) at (0,-7) {};
	\end{scope}
	
	\begin{scope}[GrayPoly]
	\begin{scope}[shift={(0,0)}] \makepoints \filldraw (1) -- (2) -- (3) -- (4) -- cycle; \drawpoints \end{scope}
	
	\begin{scope}[shift={(-3.75,-2)}] \makepoints \filldraw (2) -- (3) -- (4)-- cycle; \drawpoints \end{scope}
	\begin{scope}[shift={(-2.25,-2)}] \makepoints \draw (1) -- (4); \draw (2) -- (3); \drawpoints \end{scope}
	\begin{scope}[shift={(-0.75,-2)}] \makepoints \filldraw (1) -- (2) -- (3) -- cycle; \drawpoints \end{scope}
	\begin{scope}[shift={(0.75,-2)}] \makepoints \filldraw (1) -- (2) -- (4) -- cycle; \drawpoints \end{scope}
	\begin{scope}[shift={(2.25,-2)}] \makepoints \draw (1) -- (2); \draw (3) -- (4); \drawpoints \end{scope}
	\begin{scope}[shift={(3.75,-2)}] \makepoints \filldraw (1) -- (3) -- (4) -- cycle; \drawpoints \end{scope}
	
	\begin{scope}[shift={(-3.75,-5)}] \makepoints \draw (2) -- (3); \drawpoints \end{scope}
	\begin{scope}[shift={(-2.25,-5)}] \makepoints \draw (2) -- (4); \drawpoints \end{scope}
	\begin{scope}[shift={(-0.75,-5)}] \makepoints \draw (3) -- (4); \drawpoints \end{scope}
	\begin{scope}[shift={(0.75,-5)}] \makepoints \draw (1) -- (4); \drawpoints \end{scope}
	\begin{scope}[shift={(2.25,-5)}] \makepoints \draw (1) -- (2); \drawpoints \end{scope}
	\begin{scope}[shift={(3.75,-5)}] \makepoints \draw (1) -- (3); \drawpoints \end{scope}
	
	\begin{scope}[shift={(0,-7)}] \makepoints \drawpoints \end{scope}
	\end{scope}
\end{tikzpicture}
\caption{The maximal chains which form $\pos(\PF_{3,3})$}
\end{figure}

\newpage

\begin{prop}
  \label{pnn}
  Define two subsets of $\NC_{n+1}$ as follows:
  \[L_1 = \{\pi \mid \{n,n+1\}\in \pi\},\]
  \[L_2 = \{\pi \mid \{n+1\}\in \pi \text{ and } 1\sim n \text{ in } \pi\}.\]
  Then the elements in $\pos(\PF_{n,n})$ are precisely those in $\NC_{n+1} - (L_1 \cup L_2)$.
\end{prop}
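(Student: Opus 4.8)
The plan is to translate the whole statement into the language of edge labels and the single moment along a maximal chain at which the point $n+1$ ceases to be a singleton. First I would sharpen the discussion preceding the proposition into the following statement: a covering relation $\sigma \lessdot \tau$ in $\NC_{n+1}$ has label $n$ if and only if $\{n+1\}$ is a block of $\sigma$ and $\tau$ is obtained from $\sigma$ by merging $\{n+1\}$ with the block of $\sigma$ containing $n$. (For ``only if'': if the merge of $B_1 \ni \min B$ with $B_2$ has label $n$, then $n \in B_1$ and every element of $B_2$ exceeds $n$, so $B_2 = \{n+1\}$ and $B_1$ is the block of $n$; for ``if'': $B_1 \subseteq [n]$ contains $n$, so the label $\max B_1 = n$.) Consequently, along any maximal chain of $\NC_{n+1}$ there is a unique covering relation, the \emph{release edge}, at which the block of $n+1$ grows from $\{n+1\}$ to something larger; every other edge on the chain automatically has label $\neq n$; and the chain has an edge labeled $n$ if and only if the release edge merges $\{n+1\}$ with the block containing $n$. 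Thus $\pi \in \pos(\PF_{n,n})$ if and only if some maximal chain through $\pi$ has its release edge merge $\{n+1\}$ with a block \emph{not} containing $n$.

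Next I would show $L_1 \cup L_2 \subseteq \NC_{n+1} \setminus \pos(\PF_{n,n})$ by proving that for such $\pi$ every maximal chain through $\pi$ has a label-$n$ edge. If $\{n,n+1\} \in \pi$, then along any such chain the blocks of $n$ and of $n+1$ are contained in $\{n,n+1\}$ below $\pi$, so the release edge lies in $[\hat 0,\pi]$ and merges $\{n\}$ with $\{n+1\}$: label $n$. If $\{n+1\} \in \pi$ and $1 \sim_\pi n$, the release edge lies in $[\pi,\hat 1]$; here the key point is a crossing argument: for any $\rho \geq \pi$ the block of $\rho$ containing both $1$ and $n$ (they remain together above $\pi$) would cross the block of $n+1$ if the latter contained some $j$ with $2 \leq j \leq n-1$, since cyclically $1 < j < n < n+1$. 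Hence in the partition just above the release edge, the block of $n+1$, being larger than $\{n+1\}$, contains $n$, so the release edge merged $\{n+1\}$ with the block of $n$: label $n$.

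For the reverse inclusion, given $\pi \notin L_1 \cup L_2$ I would build a maximal chain through $\pi$ whose release edge avoids the block of $n$, casing on the block $B$ of $\pi$ containing $n+1$. If $\{n+1\}$ is not a block and $n \notin B$, then any maximal chain through $\pi$ works: the release edge lies in $[\hat 0,\pi]$ and merges $\{n+1\}$ with a subset of $B$, which omits $n$. If $\{n+1\}$ is not a block and $n \in B$, then $B \neq \{n,n+1\}$ forces $|B| \geq 3$, so I route the chain through the noncrossing partition $\pi'$ obtained from $\pi$ by splitting $B$ into $B \setminus \{n\}$ and $\{n\}$ (with $\pi' \lessdot \pi$): then $n$ is a singleton throughout $[\hat 0,\pi']$ so no edge there is labeled $n$, the release edge lies in $[\hat 0,\pi']$ and merges $\{n+1\}$ into $B \setminus \{n\}$, and the final edge $\pi' \lessdot \pi$ merges the two blocks $\{n\}$ and $B \setminus \{n\}$, neither of which is $\{n+1\}$, so it is not labeled $n$. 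Finally, if $\{n+1\} \in \pi$ then $1 \not\sim_\pi n$, and I route the chain through $\pi'' = \pi$ with $\{n+1\}$ merged into the block $A$ of $\pi$ containing $1$: one checks $\pi \lessdot \pi''$ is noncrossing by a short crossing argument using that $n+1$ is cyclically adjacent to $1$, this edge merges $A$ with $\{n+1\}$ but $n \notin A$ so it is not labeled $n$, and below $\pi$ the point $n+1$ remains a singleton so no label-$n$ edge occurs there. In every case the completed maximal chain has no edge labeled $n$, so $\pi \in \pos(\PF_{n,n})$.

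The facts I would invoke without comment are that $\NC_{n+1}$ is graded with minimum $\hat 0$ and maximum $\hat 1$, so every interval $[\hat 0,\pi]$ and $[\pi,\hat 1]$ contains a maximal chain, and that refining one block of a noncrossing partition into a sub-block and a singleton keeps it noncrossing. I expect the two crossing arguments to be the main obstacle — showing that once $n+1$ leaves $\{n+1\}$ it must land in the block of $n$ when $1 \sim_\pi n$, and that pulling $n+1$ into the block of $1$ preserves noncrossing — both of which come down to tracking the cyclic position of $n+1$ between $n$ and $1$.
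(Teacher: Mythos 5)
Your proposal is correct and takes essentially the same route as the paper: characterize the $n$-labeled covering relations as exactly those merging the singleton $\{n+1\}$ into the block containing $n$, then determine which $\pi$ force such an edge onto every maximal chain through $\pi$. The paper leaves this two-way verification at the level of the discussion preceding the proposition, while you supply the details (the release-edge observation, the crossing arguments, and the explicit chain constructions), all of which check out.
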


Notice that $L_1 \cong L_2 \cong \NC_{n-1}$ since any $\pi \in \NC_{n-1}$ can be sent to an element of $L_1$ by adding 
$\{n,n+1\}$ as a block or to $L_2$ by adding $\{n+1\}$ as a block and inserting $n$ into the block containing $1$, and removing $n$ and $n+1$ reverses these inclusions. In particular, the proposition above 
tells us that 
\[|\pos(\PF_{n,n})| = C_{n+1} - 2C_{n-1}\]
since $L_1$ and $L_2$ are disjoint copies of $\NC_{n-1}$.
Similar to our result for $\PF_{n,k}$ and $\PF_{n-1,k}$, we can see that there is a recursive structure to these posets. In this setting, the corresponding result is that $\pos(\PF_{n,k})$ is the direct product of $\pos(\PF_{n-1,k})$ and a two-element chain. Applying this repeatedly, we obtain a useful result.

\begin{figure}
  \begin{tikzpicture}
    \begin{scope}[shift={(-3,0)},BluePoly]
      \node () [disk,minimum size=4.5cm] {};
      \foreach \n in {1,...,8} {
        \coordinate (\n) at (202.5+\n*-45:1.5cm) {};
      }
      \draw (7) -- (8);
      \draw (3) -- (4);
      \filldraw (1) -- (5) -- (6) -- cycle;
      
      \node[black] () at (202.5-45:1.8cm) {\small 1};
      \node[black] () at (202.5+10:1.8cm) {\small $n+1$};
      \node[black] () at (202.5+50:1.8cm) {\small $n$};
      \foreach \n in {1,...,8} {
        \draw (\n) node [dot] {};
      }
    \end{scope}

    \begin{scope}[shift={(3,0)},BluePoly]
      \node () [disk,minimum size=4.5cm] {};
      \foreach \n in {1,...,8} {
        \coordinate (\n) at (202.5+\n*-45:1.5cm) {};
      }
      \draw (3) -- (4);
      \filldraw (1) -- (5) -- (6) -- (7) -- cycle;
      \node[black] () at (202.5-45:1.8cm) {\small 1};
      \node[black] () at (202.5+10:1.8cm) {\small $n+1$};
      \node[black] () at (202.5+50:1.8cm) {\small $n$};
      \foreach \n in {1,...,8} {
        \draw (\n) node [dot] {};
      }
    \end{scope}
  \end{tikzpicture}
  \caption{The elements of $L_1$ and $L_2$ in $\NC_8$ which correspond to $\{\{1,5,6\},\{2\},\{3,4\}\}$
  in $\NC_6$}
\end{figure}
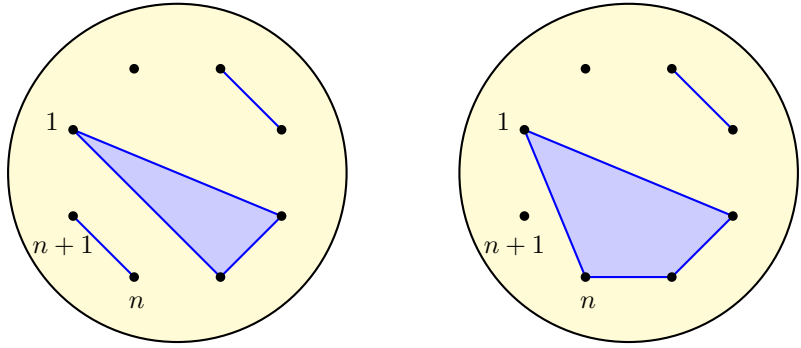

%
%

\begin{thm}
  \label{product}
  Suppose $n>k$ and let $B_{n-k}$ be the Boolean lattice of height $n-k$. Then
  $\pos(\PF_{n,k}) = \pos(\PF_{k,k}) \times B_{n-k}$.
\end{thm}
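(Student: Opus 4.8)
The plan is to establish the single-step isomorphism $\pos(\PF_{n,k}) \cong \pos(\PF_{n-1,k}) \times B_1$ for every $n>k$, where $B_1$ is the two-element chain, and then to induct on $n-k$: applying this $n-k$ times and using $B_{n-1-k}\times B_1 \cong B_{n-k}$ yields the claim, the base case $n=k$ being the observation that $B_0$ is a single point.

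For the inductive step, I would begin from the recursive description in the Remark. Since $n>k$, a parking function $a\in\PF_{n,k}$ contains exactly one entry equal to $n$, and deleting it produces a well-defined $b\in\PF_{n-1,k}$; conversely $a$ is determined by $b$ together with the position $j\in[n]$ of the inserted $n$. The first substantive step is to see what this does to the associated maximal chain $C_a$ in $\NC_{n+1}$. I would analyze covering relations labeled $n$: if $\sigma\lessdot\tau$ carries the label $n$, then in passing to $\tau$ two blocks $B_1\ni\min B$ and $B_2$ are joined with $\max\{x\in B_1 : x<\min B_2\}=n$, which forces $\min B_2=n+1$, hence $B_2=\{n+1\}$ and $n=\max B_1$. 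It follows that along $C_a$ the element $n+1$ lies in a singleton block up to the unique step labeled $n$, where it is absorbed into the block containing $n$ and stays there; moreover every label other than $n$ is unaffected by erasing $n+1$, so deleting $n+1$ from each partition of $C_a$ and collapsing the one repeated term recovers exactly the chain $C_b$ in $\NC_n$. (This parallels Proposition \ref{pnn}, which is the analogous statement at the top of the decomposition.)

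With this structural picture I would define $\rho\colon\pos(\PF_{n,k})\to\pos(\PF_{n-1,k})$ by deleting $n+1$ from its block and $\epsilon\colon\pos(\PF_{n,k})\to\{0,1\}$ recording whether $\{n+1\}$ is a singleton block, and show that $\pi\mapsto(\rho(\pi),\epsilon(\pi))$ is the required isomorphism onto $\pos(\PF_{n-1,k})\times B_1$. That $\rho$ lands in $\pos(\PF_{n-1,k})$ is immediate from the previous paragraph. For the inverse, given a pair $(\sigma,0)$ one adjoins $\{n+1\}$ as a new block and given $(\sigma,1)$ one adjoins $n+1$ to the block of $\sigma$ containing $n$; both operations preserve noncrossingness (because $n+1$ is the largest point and $n$ is forced to be the maximum of its block), they invert $(\rho,\epsilon)$, and each output lies in $\pos(\PF_{n,k})$ because choosing the insertion position $j$ large enough keeps $n+1$ a singleton through the relevant partition, while choosing $j$ small enough forces $n+1$ into the block of $n$ by that stage. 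Finally I would match covering relations: an edge of $C_a$ labeled $\ne n$ corresponds to a cover $(\tau_i,\epsilon)\lessdot(\tau_{i+1},\epsilon)$, the unique edge labeled $n$ corresponds to $(\tau_{j-1},0)\lessdot(\tau_{j-1},1)$, and conversely every cover of the product is realized for a suitable $b$ and $j$. Since both posets are the transitive closure of their covering relations, a cover-preserving bijection is a poset isomorphism.

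The main obstacle I expect is precisely this last bookkeeping: determining, as a function of the insertion position $j$, exactly which noncrossing partitions appear in $C_a$, and checking that every cover of $\pos(\PF_{n-1,k})\times B_1$ — including the ``vertical'' ones $(\sigma,0)\lessdot(\sigma,1)$ — is hit by some admissible $j\in[n]$. The label-preservation claim under deletion of $n+1$, while intuitively clear, also requires a short case analysis on whether the joined blocks involve the block of $n$. Once the behavior of $n+1$ in the chains $C_a$ is nailed down, the rest is routine.
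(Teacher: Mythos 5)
Your proposal is correct and follows essentially the same route as the paper: reduce to the single-step factorization $\pos(\PF_{n,k}) \cong \pos(\PF_{n-1,k}) \times B_1$, observe that every element either has $\{n+1\}$ as a singleton or has $n \sim n+1$ (the paper phrases this as the two intervals $[\hat{0},\tau]$ and $[\sigma,\hat{1}]$), and use the ``forget $n+1$'' map to identify each part with $\pos(\PF_{n-1,k})$. Your version just makes the chain-label and covering-relation bookkeeping more explicit than the paper does, which is fine.
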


\begin{proof}
  Recall that $B_m$ is the direct product of $m$ Boolean lattices of height $1$, i.e. the direct product of $m$ 2-element chains. Hence, it suffices to show that $\pos(\PF_{n,k}) = \pos(\PF_{n-1,k}) \times B_1$.
  To do so, we construct an explicit decomposition of $\pos(\PF_{n,k})$ into two isomorphic copies of $\pos(\PF_{n-1,k})$ which are related appropriately in the poset structure. 

  Define two elements of $\pos(\PF_{n,k})$ as follows: \footnote{The elements $\sigma$ and $\tau$ do not exist in $\pos(\PF_{n,n})$
  but are present in each other $\pos(\PF_{n,k})$ for which $n>k$, which is the case here.}
  \[\sigma = \{\{1\},\ldots,\{n-1\},\{n,n+1\}\}\]
  \[\tau = \{\{1,\ldots, n\},\{n+1\}\}\]
  Let $\hat{0}$ and $\hat{1}$ denote the minimum and maximum elements of $\NC_{n+1}$, respectively, and 
  consider the intervals $[\sigma,\hat{1}]$ and $[\hat{0},\tau]$ in $\pos(\PF_{n,k})$ - these consist of elements in which 
  $n$ and $n+1$ share a block and in which $n+1$ forms a singleton block, respectively. Notice that 
  their (disjoint) union is $\pos(\PF_{n,k})$ since each element 
  in this 
  poset must be part of a maximal chain in which an $n$-label is created. Now, construct a map from 
  $\pos(\PF_{n,k})$ to $\pos(\PF_{n-1,k})$ which ``forgets" $n+1$ by removing it from whichever block it
   was in. This map respects the poset structure and is a bijection when restricted to either of the above
    intervals, hence each is isomorphic to $\pos(\PF_{n-1,k})$. Following these bijections gives us an
     isomorphism
  \[\Phi: [\hat{0},\tau] \to [\sigma,\hat{1}],\]
  and we can see that for each $\pi \in [\hat{0},\tau]$, $\pi \leq \Phi(\pi)$. Therefore, we have found two copies of $\pos(\PF_{n-1,k})$ which satisfy the structure of a direct product with a 2-element chain. That is,
  \[\pos(\PF_{n,k}) = \pos(\PF_{n-1,k}) \times B_1.\]
\end{proof}

Theorem \ref{product} tells us that we may focus our investigation on $\pos(\PF_{n,n})$ since many
desirable poset properties respect the operation of direct product. Interestingly, this poset cannot be
factored any further via the direct product - it is irreducible in this sense.

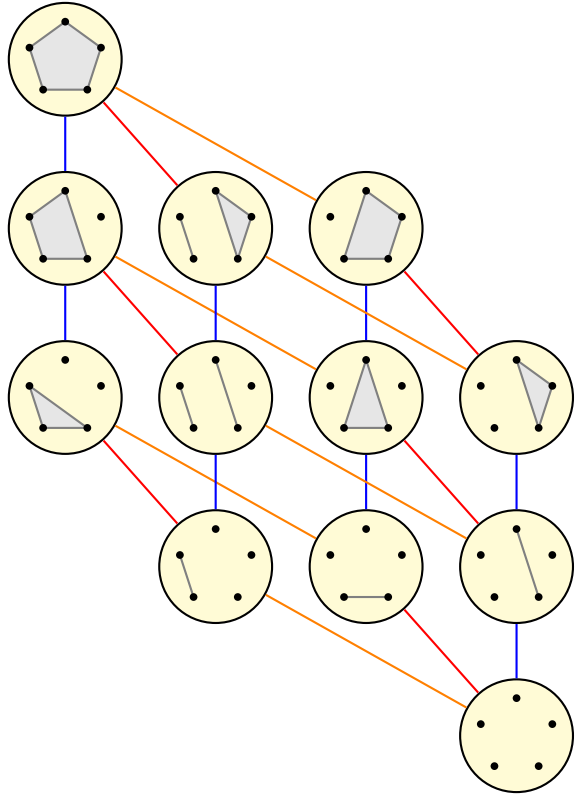
\begin{figure}
	\begin{tikzpicture}
		\node[disk,minimum size=1.5cm] (top) at (0,0) {};
		\foreach \i in {1,...,3} \node[disk,minimum size=1.5cm] (a\i) at (-2+\i*2,-2.25) {};
		\foreach \i in {1,...,4} \node[disk,minimum size=1.5cm] (b\i) at (-2+\i*2,-4.5) {};
		\foreach \i in {1,...,3} \node[disk,minimum size=1.5cm] (c\i) at (\i*2,-6.75) {};
		\node[disk,minimum size=1.5cm] (bottom) at (6,-9) {};
		
		\begin{scope}[GrayPoly]
			\begin{scope}[shift={(0,0)}] 
				\makepent 
				\filldraw (1) -- (2) -- (3) -- (4) -- (5) -- cycle; 
				\drawpent 
			\end{scope}
			
			\begin{scope}[shift={(0,-2.25)}] \makepent \filldraw (1) -- (3) -- (4) -- (5) -- cycle; \drawpent \end{scope}
			\begin{scope}[shift={(2,-2.25)}] \makepent \filldraw (1) -- (2) -- (3) -- cycle; \draw (4) -- (5); \drawpent \end{scope}
			\begin{scope}[shift={(4,-2.25)}] \makepent \filldraw (1) -- (2) -- (3) -- (4) -- cycle; \drawpent \end{scope}
			
			\begin{scope}[shift={(0,-4.5)}] \makepent \filldraw (3) -- (4) -- (5) -- cycle; \drawpent \end{scope}
			\begin{scope}[shift={(2,-4.5)}] \makepent \draw (1) -- (3); \draw (4) -- (5); \drawpent \end{scope}
			\begin{scope}[shift={(4,-4.5)}] \makepent \filldraw (1) -- (3) -- (4) -- cycle; \drawpent \end{scope}
			\begin{scope}[shift={(6,-4.5)}] \makepent \filldraw (1) -- (2) -- (3) -- cycle; \drawpent \end{scope}
			
			\begin{scope}[shift={(2,-6.75)}] \makepent \draw (4) -- (5); \drawpent \end{scope}
			\begin{scope}[shift={(4,-6.75)}] \makepent \draw (3) -- (4); \drawpent \end{scope}
			\begin{scope}[shift={(6,-6.75)}] \makepent \draw (1) -- (3); \drawpent \end{scope}
			
			\begin{scope}[shift={(6,-9)}] 
				\makepent
				\drawpent 
			\end{scope}
		\end{scope}
		
		\draw[OrangeLine] (top) -- (a3);
		\draw[BlueLine] (top) -- (a1);
		\draw[RedLine] (top) -- (a2);

		\draw[OrangeLine] (a1) -- (b3);
		\draw[BlueLine] (a1) -- (b1);
		\draw[BlueLine] (a2) -- (b2);
		\draw[BlueLine] (a3) -- (b3);
		\draw[RedLine] (a1) -- (b2);
		\draw[RedLine] (a3) -- (b4);	
		\draw[OrangeLine] (a2) -- (b4);	
		
		\draw[OrangeLine] (b1) -- (c2);	
		\draw[BlueLine] (b2) -- (c1);
		\draw[BlueLine] (b3) -- (c2);
		\draw[BlueLine] (b4) -- (c3);
		\draw[RedLine] (b1) -- (c1);
		\draw[RedLine] (b3) -- (c3);
		\draw[OrangeLine] (b2) -- (c3);	
		
		\draw[OrangeLine] (c1) -- (bottom);
		\draw[BlueLine] (c3) -- (bottom);
		\draw[RedLine] (c2) -- (bottom);	
	\end{tikzpicture}
	\caption{The direct product structure of $\pos(\PF_{4,2})$, where we label the top vertex of each noncrossing partition by $1$ and proceed clockwise.}
\end{figure}

%
%

\begin{prop}
  \label{irred}
  $\pos(\PF_{n,n})$ is irreducible as a direct product of posets.
\end{prop}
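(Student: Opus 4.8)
The plan is to argue directly with atoms and joins, bypassing Theorem~A entirely. Suppose, toward a contradiction, that $\pos(\PF_{n,n})\cong Q\times R$ with $Q$ and $R$ each having more than one element (the case $n=2$ is immediate, since $\pos(\PF_{2,2})$ has a single atom). In any product $Q\times R$ of finite bounded posets the atoms are exactly the elements $(\alpha,\hat0_R)$ and $(\hat0_Q,\beta)$ with $\alpha,\beta$ atoms of $Q,R$, and both kinds occur. Transporting along the isomorphism therefore partitions the atoms of $\pos(\PF_{n,n})$ into two nonempty sets $A_1,A_2$. The crucial point is that for $a\in A_1$ and $b\in A_2$ the join $a\vee b$ exists and $[\hat0,a\vee b]$ is a $4$-element Boolean lattice: in the product this join is $(\alpha,\beta)$, and $[\hat0,(\alpha,\beta)]=[\hat0_Q,\alpha]\times[\hat0_R,\beta]$ is a product of two $2$-element chains.

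I would then encode this in a graph $\Gamma$ on the atoms of $\pos(\PF_{n,n})$, placing an edge between $a$ and $a'$ exactly when $a\vee a'$ either fails to exist or has a lower interval $[\hat0,a\vee a']$ that is not a $4$-element Boolean lattice. By the contrapositive of the previous paragraph, every edge of $\Gamma$ joins atoms lying in the same block $A_i$; hence a nontrivial product decomposition forces $\Gamma$ to be disconnected, and the whole proposition reduces to proving that $\Gamma$ is connected.

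To handle $\Gamma$, recall from Proposition~\ref{pnn} that the atoms of $\pos(\PF_{n,n})$ are the partitions whose unique nonsingleton block is a pair, omitting $\{1,n\}$ and $\{n,n+1\}$; I identify each atom with its pair. Call a $3$-subset $T$ of $[n+1]$ \emph{admissible} if it contains neither $\{1,n\}$ nor $\{n,n+1\}$. For admissible $T=\{p,q,r\}$, the partition $\pi_T$ whose only nonsingleton block is $T$ lies in $\pos(\PF_{n,n})$, its three sub-pairs are genuine atoms, and $[\hat0,\pi_T]$ has five elements and three atoms, so it is not Boolean; thus the three pairs inside an admissible triple are pairwise $\Gamma$-adjacent. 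A direct check then shows these edges already connect every atom, using $\{1,n+1\}$ as a hub: the triples $\{1,z,n+1\}$ with $2\le z\le n-1$ join the hub to all $\{1,z\}$ and $\{z,n+1\}$; the triples $\{1,i,j\}$ with $2\le i<j\le n-1$ join $\{i,j\}$ to $\{1,i\}$; and a pair $\{i,n\}$ is reached through a triple $\{i,n,k\}$ with $k\in\{2,\dots,n-1\}\setminus\{i\}$, which exists once $n\ge4$. The sole exception is $n=3$, where the only atom not yet reached is $\{2,3\}$; there I would instead verify by hand in the $10$-element poset $\pos(\PF_{3,3})$ that $\{2,3\}\vee\{1,2\}=\hat1$, whose lower interval is all of $\pos(\PF_{3,3})$ and so certainly not Boolean, supplying the edge $\{2,3\}\sim\{1,2\}$.

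The main obstacle, I expect, is exactly this last step: the structural edges of $\Gamma$ coming from three-element blocks are easy to produce, but they leave $\{2,3\}$ isolated precisely when $n=3$, and one must notice that the relevant join of atoms skips over rank $2$ and lands at $\hat1$; packaging this residual small case cleanly rather than by an unilluminating check is the delicate part. An alternative would use Theorem~A: any nontrivial direct factor of $\pos(\PF_{n,n})$ is an interval, hence a product of pieces $\pos(\PF_{r,r})$ and $\NC_m$, so $\pos(\PF_{n,n})$ itself would be such a product with at least two factors; comparing rank with the number of atoms $\binom{n+1}{2}-2$ excludes any $\pos(\PF_{r,r})$ factor, after which an atom count forces the only surviving possibility to be $\NC_2\times\NC_3$ with $n=3$, ruled out because it has $9$ maximal chains whereas $|\PF_{3,3}|=7$.
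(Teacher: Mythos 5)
Your proposal is correct, but it takes a genuinely different route from the paper. The paper's proof is a short counting argument: a nontrivial factorization into posets of heights $m$ and $n-m$ would force the number of maximal chains of $\pos(\PF_{n,n})$ to be divisible by $\binom{n}{m}$, yet that number is $|\PF_{n,n}|=(n+1)^{n-1}-n^{n-1}\equiv 1 \pmod n$, while $\gcd\bigl(\binom{n}{m},n\bigr)>1$ for $0<m<n$ (the cited Erd\H{o}s result), a contradiction. You instead localize irreducibility at the atoms: in any nontrivial product the atoms split into two nonempty classes, cross-class joins exist with $B_2$ lower intervals, so connectivity of your graph $\Gamma$ forbids a splitting. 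This buys an elementary, self-contained argument (no chain count, no binomial gcd theorem) that explains structurally why no splitting can occur, at the cost of length and of one point you should make explicit: since $\pos(\PF_{n,n})$ is \emph{not} an induced subposet of $\NC_{n+1}$, the claims that the three sub-pairs of an admissible triple lie below $\pi_T$ and that $[\hat0,\pi_T]$ has five elements must be verified in the $\pos(\PF_{n,n})$ order, i.e., each cover $\{x,y\}\lessdot\pi_T$ must sit on some $n$-label-free maximal chain. This does hold --- above $\pi_T$ one can first merge $\{n+1\}$ into the block containing $1$ (or observe $n+1\in T$ already), after which no $n$-label can arise --- and then, since every $\pos$-upper bound of two sub-pairs lies above $\pi_T$ in $\NC_{n+1}$, their join is either nonexistent or equal to $\pi_T$, which yields exactly the edges you need; your $n=3$ patch (join equal to $\hat1$) and the $n=2$ remark are fine. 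Your closing alternative via Theorem A (rank versus atom count, then excluding $\NC_2\times\NC_3$ by its $9$ maximal chains against $|\PF_{3,3}|=7$) is also valid, but note that the paper's own proof of this proposition uses neither Theorem A nor atoms.
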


\begin{proof}
  Notice that if $P$ and $Q$ are posets of height $m$ and $n$, respectively, then there is a $1$-to-$\binom{m+n}{m}$ correspondence between pairs of maximal chains from $P$ and $Q$ and the set of maximal chains in $P\times Q$. We can see this by realizing that the maximal chains in the direct product of $P$ and $Q$  are formed by merging a maximal chain from each, and there are $\binom{m+n}{m}$ ways to do this.

  If $\pos(\PF_{n,n})$ were reducible, then the number of maximal chains in $\pos(\PF_{n,n})$ could be expressed as
  $\binom{n}{k}$ for some $k\in\{1,\ldots ,n-1\}$. However, we know that the number of maximal chains in $\pos(\PF_{n,n})$ is 
  \[|\PF_{n,n}| = (n+1)^{n-1} - n^{n-1},\]
  which is coprime to $n$, whereas $\binom{n}{k}$ shares a nontrivial factor with $n$ since 
  $\gcd(\binom{n}{i},\binom{n}{j}) > 1$ whenever $0 < i,j < n$. \cite{erdos} Therefore, $\pos(\PF_{n,n})$ is irreducible.
\end{proof}

%
%

In addition to the recursive structure depicted above, the symmetry
exhibited by these posets convinces us that they are a worthwhile setting for investigation.

\begin{thm}
  \label{selfdual}
  $\pos(\PF_{n,k})$ is self-dual.
\end{thm}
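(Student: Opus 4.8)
The plan is to first reduce to the case $k = n$: by Theorem~\ref{product} we have $\pos(\PF_{n,k}) \cong \pos(\PF_{k,k}) \times B_{n-k}$, the Boolean lattice $B_{n-k}$ is self-dual (via $S \mapsto S^c$), and a direct product of self-dual posets is self-dual, so it suffices to produce an order-reversing bijection from $\pos(\PF_{n,n})$ to itself.

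The natural source of such a map is the self-duality of $\NC_{n+1}$ itself, witnessed by the Kreweras complement $K$, which is an order-reversing bijection of $\NC_{n+1}$. Recall that $\pos(\PF_{n,n})$ is, by definition, the poset whose Hasse diagram is the union of the maximal chains of $\NC_{n+1}$ corresponding to $\PF_{n,n}$; thus any anti-automorphism of $\NC_{n+1}$ that permutes this collection of maximal chains (reversing each one) will restrict to an anti-automorphism of $\pos(\PF_{n,n})$. The complement $K$ by itself need not do this --- it moves, for instance, the element $\sigma = \{\{1\},\ldots,\{n-1\},\{n,n+1\}\}\in L_1$ to a partition lying outside $L_1\cup L_2$ --- so I would look for a reflection $\rho$ of the $(n+1)$-gon (an automorphism of $\NC_{n+1}$) for which $\Psi := \rho\circ K$ does the job. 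The point is that both $L_1$ and $L_2$ of Proposition~\ref{pnn}, and the covering relations carrying the label $n$, are described purely in terms of the three cyclically consecutive vertices $n$, $n+1$, $1$, so the correcting reflection is essentially forced. Since $\Psi$ already sends maximal chains of $\NC_{n+1}$ to maximal chains and reverses them, the whole question reduces to showing that $\Psi$ preserves the set of covering relations of $\NC_{n+1}$ that receive the label $n$; this automatically forces $\Psi$ to preserve the vertex set $\NC_{n+1}\setminus(L_1\cup L_2)$ of $\pos(\PF_{n,n})$, since by Proposition~\ref{pnn} that set is exactly the union of the vertex sets of the $\PF_{n,n}$-chains.

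Two structural observations make the target concrete. First, as already noted in the discussion before Proposition~\ref{pnn}, a covering relation $\pi < \pi'$ of $\NC_{n+1}$ has label $n$ precisely when $\{n+1\}$ is a singleton block of $\pi$ and $n \sim_{\pi'} n+1$ (equivalently, $\pi'$ is obtained from $\pi$ by merging $\{n+1\}$ into the block containing $n$); this is the only way that merging the singleton $\{n+1\}$ can produce an $n$-label, the sole alternative being a merge with the block of $1$, which yields a label $< n$. Second --- and this is the precise sense in which $\pos(\PF_{n,n})$ is not an induced subposet --- a covering relation $\pi < \pi'$ of $\NC_{n+1}$ with $\pi,\pi' \in \NC_{n+1}\setminus(L_1\cup L_2)$ is a covering relation of $\pos(\PF_{n,n})$ if and only if its label is not $n$: given such an edge with label $\neq n$, splicing the lower portion of a $\PF_{n,n}$-chain through $\pi$, the edge $\pi < \pi'$, and the upper portion of a $\PF_{n,n}$-chain through $\pi'$ produces a $\PF_{n,n}$-chain containing it. Hence everything reduces to checking that $\Psi$ sends label-$n$ edges to label-$n$ edges.

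I expect that last verification to be the main obstacle. It requires understanding the interplay between the Kreweras complement and the edge-labeling of $\NC_{n+1}$: one must track what $K$ and the reflection $\rho$ do to the blocks containing $1$, $n$, and $n+1$, and in particular establish that the structural condition ``$\{n+1\}$ is a singleton of the lower vertex and $n \sim n+1$ in the upper vertex'' is preserved by $\Psi$. Pinning down the reflection $\rho$ so that this bookkeeping closes up is where the real work lies; once it does, the same computation shows $\Psi^2 = \mathrm{id}$, which is consistent with --- and a precursor to --- the later fact that $\pos(\PF_{n,n})$ has no nontrivial order-preserving automorphisms.
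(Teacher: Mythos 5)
Your reductions are sound and in fact coincide with the paper's strategy: reduce to $k=n$ via Theorem~\ref{product} (the paper likewise invokes the fact that a product of self-dual posets is self-dual), look among the $2n+2$ anti-automorphisms of $\NC_{n+1}$ for a dihedrally twisted Kreweras complement, and observe that everything comes down to showing the chosen map carries $n$-labeled covering relations to $n$-labeled covering relations. Your remark that preserving the label-$n$ edges alone already forces preservation of the vertex set $\NC_{n+1}\setminus(L_1\cup L_2)$ is correct, and even a little slicker than the paper, which verifies separately that its map swaps $L_1$ and $L_2$. (One small slip: a singleton $\{n+1\}$ in $\sigma$ can merge with blocks other than those of $1$ and $n$; but your characterization of the label-$n$ covers, namely $\{n+1\}\in\sigma$ and $n\sim_\tau n+1$, is the correct one and is what matters.)

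However, the proposal stops exactly where the proof begins. You never exhibit the anti-automorphism $\Psi=\rho\circ K$ --- the correcting reflection is only asserted to be ``essentially forced'' --- and you never carry out the verification that label-$n$ covers map to label-$n$ covers, explicitly deferring it as ``the main obstacle'' and ``where the real work lies.'' That verification is the entire substance of the paper's proof: the paper defines the map concretely (intersperse white points so that black point $i$ immediately precedes white point $n-i$ modulo $n+1$, i.e.\ the Kreweras complement composed with the relabeling $i\mapsto n-i$), from which one reads off that $\{n+1\}$ is a singleton of $\pi$ if and only if $n\sim n+1$ in $\rho(\pi)$, and $n\sim_\pi n+1$ if and only if $\{n+1\}\in\rho(\pi)$; these two equivalences turn the condition ``$\{n+1\}\in\sigma$ and $n\sim_\tau n+1$'' for a cover $\sigma\lessdot\tau$ into ``$\{n+1\}\in\rho(\tau)$ and $n\sim n+1$ in $\rho(\sigma)$'' for the reversed cover $\rho(\tau)\lessdot\rho(\sigma)$. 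So the gap is fillable and your plan points in the right direction, but as written the proposal contains no proof of the one nontrivial claim on which everything rests.
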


\begin{proof}
  There are $2n+2$ order-reversing automorphisms of $\NC_{n+1}$ \cite{biane97}, so we will describe one and show that it fixes $\pos(\PF_{n,n})$. Self-duality of $\pos(\PF_{n,k})$ then follows from Theorem \ref{product} since the product of self-dual posets is self-dual.

  Let $\pi \in \NC_{n+1}$ and suppose $i\leq j$ for some $i,j\in [n+1]$. Define
  \[A_{i,j} = \{n-j+1, n-j+2, \ldots , n-i-1, n-i\}\]
  and 
  \[B_{i,j} = [n+1] - A_{i,j}.\]
  We then define a map $\rho : \NC_{n+1} \to \NC_{n+1}$ by declaring that $i\sim j$ in $\rho(\pi)$ if and only if $x\not\sim y$ in $\pi$ for all $x\in A_{i,j}$ and $y\in B_{i,j}$.

  \begin{figure}
    \begin{tikzpicture}

      \begin{scope}[shift={(-4.4,0)},BluePoly]
	\node () [disk,minimum size=3.6cm] {};
	\foreach \n in {1,...,8} {
          \coordinate (\n) at (202.5+\n*-45:1.2cm) {};
          \node[black] () at (202.5+\n*-45:1.5cm) {\n};
	}
	\filldraw (1) -- (3) -- (8) -- cycle;
	\filldraw (5) --(6) -- (7) -- cycle;
	\foreach \n in {1,...,8} {
          \draw (\n) node [dot] {};
	}
      \end{scope}

      \draw[->] (-2.4,0) to (-2,0);

      \begin{scope}[shift={(0,0)}]
	\node () [disk,minimum size=3.6cm] {};
	\foreach \n in {1,...,8} {
          \coordinate (\n) at (202.5+\n*-45:1.2cm) {};
          \coordinate (\n+8) at (-135+\n*45:1.2cm) {};
          \node[black] () at (202.5+\n*-45:1.5cm) {\n};
          \node[gray] () at (-135+\n*45:1.5cm) {\n};

	}
	\filldraw[BluePoly] (1) -- (3) -- (8) -- cycle;
	\filldraw[BluePoly] (5) -- (6) -- (7) -- cycle;
	\filldraw[RedPoly,dashed] (3+8) -- (4+8) -- (8+8) -- cycle; 
	\draw[RedPoly,dashed] (5+8) -- (6+8);
	\foreach \n in {1,...,8} {
          \fill (\n) circle (2pt);
          \draw[color=black,fill=white] (\n+8) circle (2pt);
	}
      \end{scope}

      \draw[->] (2,0) to (2.4,0);

      \begin{scope}[shift={(4.4,0)},RedPoly]
	\node () [disk,minimum size=3.6cm] {};
	\foreach \n in {1,...,8} {
          \coordinate (\n) at (202.5+\n*-45:1.2cm) {};
          \node[black] () at (202.5+\n*-45:1.5cm) {\n};
	}
	\filldraw[RedPoly] (3) -- (4) -- (8) -- cycle;
	\draw[red] (5) -- (6);
	\foreach \n in {1,...,8} {
          \draw (\n) node [dot] {};
	}
      \end{scope}
    \end{tikzpicture}
    \caption{An order-reversing involution of $\NC_8$}
    \label{dual}
  \end{figure}
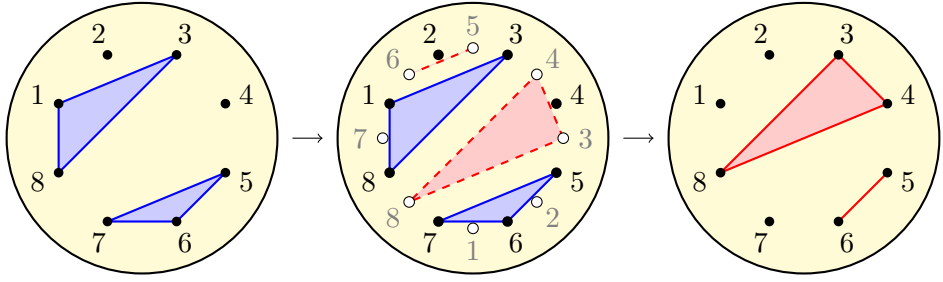

  Put another way, we can obtain $\rho(\pi)$ by first drawing the circular representation of $\pi$ and interspersing $n+1$ extra white points between the preexisting black points so that, proceeding clockwise, the black point labeled $i$ is just before the white point labeled $n-i$, modulo $n+1$. Then, to obtain the noncrossing partition $\rho(\pi)$, we take the convex hulls of white points which do not cross the original partition - see Figure \ref{dual}.

  It is easy to see that this forms an order-reversing involution on $\NC_{n+1}$ and is thus an isomorphism. We need to check that this map fixes the elements $\pos(\PF_{n,n})$ and is order-reversing in that poset's relation.

  Notice that if $\{n,n+1\} \in \pi$, then $\{n+1\} \in \rho(\pi)$ and $1 \sim n$ in $\rho(\pi)$, so $\rho$ restricts to an
  order-reversing isomorphism $L_1 \to L_2$ as described in Proposition \ref{pnn}. Hence $L_1\cup L_2$ is
  fixed by $\rho$ and thus so is $\pos(\PF_{n,n})$.
  
  To see that $\rho$ is order-reversing with respect to $\pos(\PF_{n,n})$, it suffices to show that any covering relation in $\NC_{n+1}$ which produces an $n$-label is sent to another $n$-labeled covering relation. Thankfully, we know that such a covering relation $\sigma < \tau$ occurs if and only if $\{n+1\} \in \sigma$ and $n \sim n+1$ in $\tau$. Examining the definition of $\rho$, we see this is equivalent to knowing that $n\sim n+1$ in $\rho(\sigma)$ and $\{n+1\} \in \rho(\tau)$, so the new covering relation is also labeled by an $n$. Hence, $\rho$ is an order-reversing automorphism of $\NC_{n+1}$ which fixes $\pos(\PF_{n,n})$ and respects its poset structure, so $\rho$ is an order-reversing automorphism of $\pos(\PF_{n,n})$.
\end{proof}

The rest of our results make use of the convenient structure of intervals in $\pos(\PF_{n,n})$, on which we must first prove a somewhat technical theorem, mirroring the fact that each interval in $\NC_{n+1}$ is a direct product of smaller noncrossing
partition lattices.

%
%

\begin{thma}
  \label{intervals}
  Each interval in $\pos(\PF_{n,n})$ is either of the form 
  \[\prod_{i=1}^j \NC_{m_i} \text{ or } \pos(\PF_{r,r}) \times \prod_{i=1}^j \NC_{m_i}.\]
\end{thma}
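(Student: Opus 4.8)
The plan is to imitate, one level deeper, the classical analysis that realizes an interval of $\NC_{n+1}$ as a product of smaller noncrossing partition lattices, keeping careful track of what is deleted in passing to $\pos(\PF_{n,n})$. First I would record a chain‑theoretic description of the order. Recall that a covering relation $\sigma\lessdot\tau$ of $\NC_{n+1}$ carries the label $n$ precisely when $\{n+1\}$ is a block of $\sigma$ and $\tau$ is obtained by merging $\{n+1\}$ with the block of $\sigma$ containing $n$. I claim that for $\mu,\nu\in\pos(\PF_{n,n})$ one has $\mu\le\nu$ in $\pos(\PF_{n,n})$ if and only if there is a saturated chain from $\mu$ to $\nu$ in $\NC_{n+1}$ using no edge labeled $n$: one direction is the definition of the Hasse diagram, and for the other, any covering relation with both endpoints in $\pos(\PF_{n,n})$ and not labeled $n$ lies on a full $n$‑avoiding maximal chain, obtained by splicing $n$‑avoiding chains from $\hat0$ to $\sigma$ and from $\tau$ to $\hat1$ (which exist since $\sigma,\tau\in\pos(\PF_{n,n})$). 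The same splicing, together with Proposition~\ref{pnn}, shows that every element lying on an $n$‑avoiding chain automatically avoids $L_1\cup L_2$; in particular the interval $[\mu,\nu]$ in $\pos(\PF_{n,n})$ consists exactly of the $\xi$ that are simultaneously reachable from $\mu$ and co‑reachable to $\nu$ along $n$‑avoiding chains.

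Next I would cut the interval along the block structure of its top element. Fix $\pi\le\rho$ in $\pos(\PF_{n,n})$ and refine the standard fact that each interval of $\NC_{n+1}$ is a product of smaller $\NC$'s to the explicit form $[\pi,\rho]_{\NC_{n+1}}\cong\prod_B\NC_{m_B}$, the product running over the blocks $B$ of $\rho$, where $m_B$ is the number of blocks of $\pi$ contained in $B$ and the $B$‑factor is $\NC(B)$ acting on those blocks in cyclic order, with $\pi|_B$ and $\rho|_B$ in the roles of $\hat0$ and $\hat1$. Let $B^\ast$ be the block of $\rho$ containing $n+1$. Since an $n$‑labeled edge merges the singleton $\{n+1\}$ with the block of $n$, and since no block ever leaves its $\rho$‑block inside the interval, an edge of $[\pi,\rho]_{\NC_{n+1}}$ can be labeled $n$ only if it is a merge taking place inside $B^\ast$ (which requires $n\in B^\ast$). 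Hence $n$‑avoidance of a saturated chain is governed entirely by the chain's projection to the $B^\ast$‑factor, so by the previous paragraph $[\pi,\rho]_{\pos(\PF_{n,n})}\cong Z\times\prod_{B\ne B^\ast}\NC_{m_B}$ as posets, where $Z$ is the set of elements of $\NC_{m_{B^\ast}}$ that are reachable from $\hat0$ and co‑reachable to $\hat1$ along edges avoiding the distinguished label, with the reachability order.

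It remains to identify $Z$, and here lies the real work. If $\{n+1\}$ is not a block of $\pi$, or if $n\notin B^\ast$, then no edge of the $B^\ast$‑factor between $\pi|_{B^\ast}$ and $\rho|_{B^\ast}$ can carry the forbidden label — in the first case $\{n+1\}$ is never a singleton, and in the second the block of $n$ lies outside $B^\ast$, while a short noncrossing argument shows that $\rho\in\pos(\PF_{n,n})$ forces $1\not\sim n$ in $\rho$, so the interval meets neither $L_1$ nor $L_2$ — whence $Z=\NC_{m_{B^\ast}}$ and the interval has the first form. Otherwise $\{n+1\}$ is a block of $\pi$ and $n\sim_\rho n+1$; as $n$ immediately precedes $n+1$ on the circle, the block of $\pi$ containing $n$ is the cyclic predecessor of $\{n+1\}$ among the $\pi$‑blocks inside $B^\ast$, so we may relabel these $m:=m_{B^\ast}$ blocks cyclically by $1,\dots,m$ with $\{n+1\}\mapsto m$ and the block of $n\mapsto m-1$, after which the forbidden edges of the $B^\ast$‑factor become exactly those merging the singleton $m$ into the block containing $m-1$ — that is, the edges ``creating an $(m-1)$‑label'' for $\NC_m$. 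The main obstacle is then to verify that pruning these edges leaves \emph{precisely} $\pos(\PF_{m-1,m-1})$: that the surviving elements (those reachable from $\hat0$ and co‑reachable to $\hat1$) are exactly $\NC_m$ minus the partitions with $\{m-1,m\}$ a block or with $\{m\}$ a singleton and $1\sim m-1$, as in Proposition~\ref{pnn}, and that the reachability order coincides with that of $\pos(\PF_{m-1,m-1})$ — in short, that the ``dynamic'' and ``static'' descriptions of this factor agree. These are the same reachability computations that already justify Proposition~\ref{pnn}, so I expect them to go through; granting this, $Z\cong\pos(\PF_{m-1,m-1})$, and since $m=2$ would force the unique edge of the $B^\ast$‑factor to be forbidden (contradicting $\pi\le\rho$ in $\pos(\PF_{n,n})$), we have $r:=m-1\ge2$ and the interval has the second form. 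Besides this identification, the other point demanding care is the third‑paragraph claim that all deletions are confined to the single factor $B^\ast$.
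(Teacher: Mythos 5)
Your first paragraph (the description of the order in $\pos(\PF_{n,n})$ via $n$-label-avoiding saturated chains, with the splicing argument) is fine, and your overall strategy of localizing the problem to the block $B^\ast$ of the top element containing $n+1$ parallels the paper. But the pivot of your argument is false: an interval $[\pi,\rho]$ in $\NC_{n+1}$ is \emph{not} isomorphic to $\prod_B \NC_{m_B}$ with $m_B$ the number of $\pi$-blocks inside the $\rho$-block $B$, and the $B$-factor is not obtained by contracting those blocks to points in cyclic order (indeed "cyclic order" of nested or interleaved blocks is not even well defined). Upper intervals in $\NC$ are governed by the Kreweras complement: already for $\pi=\{\{1,3\},\{2\},\{4\}\}$ in $\NC_4$ the interval $[\pi,\hat{1}]$ has four elements ($\cong \NC_2\times\NC_2$), not five ($\NC_3$). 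Consequently your identification of $Z$ in the hard case fails. Concretely, take $n=4$, $\rho=\hat{1}$, and $\pi=\{\{1,3\},\{2\},\{4\},\{5\}\}\in\pos(\PF_{4,4})$: here $m_{B^\ast}=4$, so your recipe would give $Z\cong\pos(\PF_{3,3})$, a poset with $10$ elements; but $[\pi,\hat{1}]_{\NC_5}\cong\NC_2\times\NC_3$ has only $10$ elements to begin with, four of which lie in $L_1\cup L_2$, and the actual interval $[\pi,\hat{1}]_{\pos(\PF_{4,4})}$ is the $6$-element poset $\pos(\PF_{2,2})\times\NC_2$. So the conclusion of the theorem still has the right shape, but not via your parametrization, and the step you flag as "the same reachability computations as Proposition~\ref{pnn}" cannot go through as stated.

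The paper avoids exactly this trap. It never contracts the $\sigma$-blocks inside $B$ to points: it splits off the part of the interval outside $B$ as an honest $\NC_{n+1}$-interval $[\sigma,\tau']$ (no forbidden edges there), and keeps the inside-of-$B$ part as an \emph{upper} interval $[\sigma',\hat{1}]$ of $\pos(\PF_{l,l})$, where $l+1=|B|$. The key move you are missing is then to apply the order-reversing involution $\rho$ of Theorem~\ref{selfdual} to convert this upper interval into a \emph{lower} interval $[\hat{0},\rho(\sigma')]$; for lower intervals the naive block-by-block product decomposition really is valid ($[\hat{0},\pi]_{\NC_{l+1}}\cong\prod_{C\in\pi}\NC_{|C|}$), and only the factor of the block containing $l$ and $l+1$ gets pruned, yielding $\pos(\PF_{r,r})$. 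Without either that duality trick or a genuine Kreweras-complement analysis of which factor of the upper interval absorbs the forbidden edges, your third paragraph does not close the argument.
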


\begin{proof}
  Let $\sigma, \tau \in \pos(\PF_{n,n})$ with $\sigma < \tau$. If $\sigma = \hat{0}$ and $\tau = \hat{1}$, then $[\sigma,\tau] =\pos(\PF_{n,n})$ and we're done. For now, suppose that $\tau \neq \hat{1}$. Recall that the only way to produce an $n$-label in a chain between two elements in $\NC_{n+1}$ is if $n\sim n+1$ in the coarser partition and $n+1$ is a singleton in the finer partition. If one of these conditions is not satisfied, i.e. $n \not\sim_\tau n+1$ or $\{n+1\} \not\in \sigma$, then
  \[[\sigma,\tau]_{\pos(\PF_{n,n})} = [\sigma,\tau]_{\NC_{n+1}}\]
  and since we know that intervals in $\NC_{n+1}$ are products of smaller noncrossing partition lattices \cite{nica}, 
  the result follows for this case.

  Now, suppose that $n\sim_\tau n+1$ and $\{n+1\}\in \sigma$. Then there are elements between $\sigma$ and $\tau$ in $\NC_{n+1}$ which do not appear in $\pos(\PF_{n,n})$, so we need to try something different. 

  Let $B \in \tau$ be the block containing $n$ and $n+1$ and let $B_1,\ldots B_k\in \sigma$ be the blocks whose union is $B$. If we write $B = \{b_1,\ldots ,b_{l+1}\}$ such that $b_1 < \ldots <b_{l+1}$, then we can ``factor out" an interval in $\pos(\PF_{l,l})$ from $[\sigma, \tau]$ in the following way: define $f:B\to [l+1]$ by $f(b_i)=i$ and notice that 
  \[\sigma^\prime = \{f(B_1),\ldots f(B_k)\} \in \NC_{l+1}.\]
  In fact, $\sigma^\prime \in \pos(\PF_{l,l})$ by Proposition \ref{pnn}, which we will use momentarily.

  Next, define
  \[\tau^\prime = \{\tau - \{B\}\} \cup \{B_1,\ldots B_k\}.\]
  Notice that $\sigma$ and $\tau^\prime$ have blocks which agree on the elements of $B$. The benefit of making this modification is that the interval $[\sigma,\tau^\prime]_{\pos(\PF_{n,n})}$ is equal to $[\sigma,\tau^\prime]_{\NC_{n+1}}$ since we have ``removed" the blocks which could create $n$-labels. Similarly, the interval $[\sigma^\prime,\hat{1}]_{\pos(\PF_{l,l})}$ contains (an isomorphic copy of) only those elements. More concretely, we have decomposed our interval to obtain
  \[[\sigma,\tau]_{\pos(\PF_{n,n})} = [\sigma^\prime,\hat{1}]_{\pos(\PF_{l,l})} \times [\sigma,\tau^\prime]_{\NC_{n+1}}.\]
  Applying our order-reversing map $\rho$, we can see that
  \[[\sigma^\prime,\hat{1}] \cong \rho([\sigma^\prime,\hat{1}]) = [\rho(\hat{1}),\rho(\sigma^\prime)] = [\hat{0},\rho(\sigma^\prime)]\]
  in $\pos(\PF_{l,l})$. Notice that since $\{n+1\} \in \sigma$, then $\{l+1\} \in \sigma^\prime$, so we have $l \sim l+1$ in $\rho(\sigma^\prime)$. Then the interval $[\hat{0},\rho(\sigma^\prime)]$ decomposes into the direct product of $\pos(\PF_{r,r})$ (where $r$ is the size of the block containing $l$ and $l+1$ in $\rho(\sigma^\prime)$) and $\prod_{i=1}^j \NC_{m_i}$ (where the $m_i$'s are the sizes of the other blocks in $\rho(\sigma^\prime)$). Hence, we have
  \begin{align*}
    [\sigma,\tau]_{\pos(\PF_{n,n})} &\cong [\hat{0},\rho(\sigma^\prime)]_{\pos(\PF_{l,l})} \times [\sigma, \tau^\prime]_{\NC_{n+1}} \\
    &\cong \pos(\PF_{r,r}) \times \prod_{i=1}^j \NC_{m_i} \times [\sigma, \tau^\prime]_{\NC_{n+1}}
  \end{align*}
  Since we know the form of intervals in $\NC_{n+1}$, we're done.
\end{proof}

With this result in hand, we can prove a few more properties about the structure of these posets. In particular, we have proven a stronger version of Theorem \ref{selfdual}.

%
%

\begin{thma}
  $\pos(\PF_{n,k})$ is locally self-dual.
\end{thma}

We know that $\pos(\PF_{n,n})$ is self-dual via our map $\rho$, but it is worth wondering whether there are other maps which
would work just as well. Interestingly, this symmetry is unique - put another way, there is only one order-preserving automorphism.

%
%

\begin{thm}
  The identity map is the unique order-preserving automorphism on $\pos(\PF_{n,n})$.
\end{thm}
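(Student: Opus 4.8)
The plan is to show that any order-preserving automorphism $\varphi$ of $\pos(\PF_{n,n})$ must be the identity by pinning down the images of enough elements. First I would establish that an automorphism must fix the atoms (rank-one elements) and coatoms (elements just below $\hat 1$) setwise, and more: since $\pos(\PF_{n,n})$ is graded (all maximal chains have the same length $n$) and $\varphi$ preserves $\hat 0$ and $\hat 1$, it preserves rank. So the real task is to see that $\varphi$ fixes each atom individually. The atoms of $\pos(\PF_{n,n})$ are the noncrossing partitions of $[n+1]$ with exactly one block of size two, subject to not lying in $L_1\cup L_2$ from Proposition~\ref{pnn}; concretely these are the ``arcs'' $\{i,j\}$ with the pair $\{n,n+1\}$ excluded and with the pair $\{1,n\}$ allowed only when it coexists with a non-singleton $\{n+1\}$, which an atom cannot — so the excluded atoms are exactly $\{n,n+1\}$ and $\{1,n\}$. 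Thus the atoms are in bijection with the $\binom{n+1}{2}-2$ remaining unordered pairs.

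The key combinatorial handle is the structure of intervals $[a,\hat1]$ for $a$ an atom, or more robustly the up-sets of atoms, together with local self-duality (Theorem~C) and the interval classification (Theorem~D). Two atoms $a,a'$ corresponding to arcs $\{i,j\}$ and $\{i',j'\}$ can be distinguished by invariants that $\varphi$ must preserve: for instance, the number of atoms $b$ such that $a$ and $b$ have a common upper bound of rank two, or the isomorphism type of the interval $[a,\hat1]$, or the number of coatoms above $a$. Using Theorem~D, $[a,\hat1]$ is a product of noncrossing partition lattices (since $a$ is an atom, $n\sim_a n+1$ fails, so we are in the first case of the proof of Theorem~D), and the multiset of the $m_i$'s is determined by how the arc $\{i,j\}$ cuts the circle $[n+1]$; these multisets differ for arcs of different ``type.'' After showing the type of each arc is an invariant, I would use the asymmetry introduced by removing precisely the two atoms $\{n,n+1\}$ and $\{1,n\}$ to break any remaining symmetry: in $\NC_{n+1}$ the automorphism group is the dihedral action on the circle, but deleting those two specific atoms (and the other elements of $L_1\cup L_2$) destroys all nontrivial rotations and reflections, leaving only the identity as a candidate. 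A clean way to package this: any $\varphi$ extends information that would have to come from a symmetry of the configuration of surviving atoms under the ``common rank-two upper bound'' adjacency, and that graph — essentially the arc-crossing/nesting structure on $[n+1]$ minus two edges — has trivial automorphism group for $n$ large; small cases are checked directly.

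Concretely the steps are: (1) $\varphi$ preserves rank, hence permutes atoms and permutes coatoms; (2) identify the atoms explicitly via Proposition~\ref{pnn}; (3) show the isomorphism type of $[a,\hat1]$, computed via Theorem~D, together with adjacency data among atoms, forms a complete invariant distinguishing the atoms from one another; (4) conclude $\varphi$ fixes every atom; (5) run the same analysis at the top using local self-duality / $\rho$ to fix every coatom; (6) bootstrap to all of $\pos(\PF_{n,n})$ — an element of rank two is determined by the set of atoms below it, so $\varphi$ fixes all rank-two elements, and dually; induct inward, or simply note that in a graded poset in which every element is both a join of atoms below it and a meet of coatoms above it, fixing atoms and coatoms forces the identity (one should check $\pos(\PF_{n,n})$ has this property, which follows from it being a meet/join-generated subposet arising from maximal chains). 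The main obstacle I expect is step (3)–(4): proving that no nontrivial permutation of the surviving atoms is induced by an automorphism. The danger is an ``accidental'' symmetry exchanging two arcs of the same type; the resolution is that any such exchange must be globally consistent along chains, hence come from a dihedral symmetry of the circle fixing the pair-of-deleted-atoms configuration $\{\{n,n+1\},\{1,n\}\}$ — and those two arcs share the vertex $n$ but are not interchangeable by any rotation or reflection (a reflection fixing $n$ would have to fix or swap $\{n+1\}$ and $\{1\}$, and checking both cases against which arcs survive rules it out), so only the identity remains.
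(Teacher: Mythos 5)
Your overall strategy (fix the atoms, then bootstrap upward) is reasonable, but the heart of the argument --- your steps (3)--(4) --- is asserted rather than proven, and it is precisely the hard part of the theorem. You claim that any permutation of the surviving atoms induced by an automorphism ``must be globally consistent along chains, hence come from a dihedral symmetry of the circle,'' but no reason is given why an automorphism of $\pos(\PF_{n,n})$ should induce a symmetry of the crossing/nesting structure on $[n+1]$. The analogous statement for $\NC_n$ itself (automorphisms are dihedral) is a nontrivial theorem of Biane that cannot simply be transferred to this smaller, non-induced, non-lattice poset; atoms of the same arc-type far from the deleted region (say $\{2,3\}$ and $\{3,4\}$ for large $n$) have isomorphic upper intervals and locally identical adjacency data, so your proposed invariants do not by themselves pin the atoms down, and the triviality of the automorphism group of the ``crossing graph minus two edges'' is also only asserted. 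The paper circumvents exactly this obstruction differently: it locates two canonically characterized elements, $E=\{\{1,n+1\},\{2\},\ldots,\{n\}\}$ and $T=\{\{1,n,n+1\},\{2\},\ldots,\{n-1\}\}$ ($T$ being the unique height-$2$ element with a single child), so any automorphism fixes them and, via $\rho$, their duals; it then applies Biane's classification to the genuine copy of $\NC_n$ given by the interval $[E,\hat{1}]$, where the two fixed elements kill the dihedral action, and finally fixes every remaining $\sigma$ by showing it is the unique element of the small interval $[\sigma\wedge\rho(E),\sigma\vee E]$ lying outside $[E,\hat{1}]\cup[\hat{0},\rho(E)]$.

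Two secondary problems: your use of Theorem D is off --- for an atom $a$ not involving $n+1$ one has $\{n+1\}\in a$ and $n\sim n+1$ in $\hat{1}$, so $[a,\hat{1}]$ falls into the \emph{second} case and carries a $\pos(\PF_{r,r})$ factor, not a product of $\NC_{m_i}$'s; and your step (6) bootstrapping (``every element is determined by the atoms below it / is a join of atoms and meet of coatoms'') is unverified in this poset, which is not a lattice and is missing order relations, so the inductive passage from fixed atoms and coatoms to all elements needs an actual argument of the kind the paper supplies.
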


\begin{proof}
Let $f$ be an order-preserving automorphism of $\pos(\PF_{n,n})$ and consider the two elements characterized as an edge from 
$1$ to $n+1$ and the triangle between $1$, $n$, and $n+1$:
\[E \coloneqq \{\{1,n+1\},\{2\},\ldots , \{n\}\}\]
and
\[T \coloneqq \{\{1,n,n+1\},\{2\},\ldots, \{n-1\}\}.\]
Although $T$ has three children in $NC_{n+1}$, it is easy to see that it is the unique element at height 2 in $\pos(\PF_{n,n})$
with only one child: $E$. Hence, $T$ (and thus $E$ as well) must be fixed by $f$, and by duality via $\rho$, so must $\rho(T)$ and
$\rho(E)$. Then we know that the intervals
\[[E,\hat{1}] = \{\pi\in\pos(\PF_{n,n}) \mid 1 \sim n+1 \text{ in } \pi\}\]
and
\[[\hat{0},\rho(E)] = \{\pi \in \pos(\PF_{n,n}) \mid \{n\}\in \pi\}\]
are fixed setwise by $f$.

In fact, we can conclude something stronger - notice that $[E,\hat{1}]$ is isomorphic to $NC_n$ by merging $1$ with $n+1$. Since 
the automorphisms of $NC_n$ are the $2n$ natural dihedral symmetries \cite{biane97}, $f$ must restrict to one of these on this 
interval. But $E$ and $\rho(E)$ are fixed by $f$ and correspond to the elements
$\{\{1,n\},\{2,\ldots ,n-1\}\}$ and $\{\{1,\ldots ,n-1\},\{n\}\}$
in $NC_n$, so the identity is the only possibility. So $f$ fixes each element in $[E,\hat{1}]$ (and $[\hat{0},\rho(E)]$ by similar 
argument). 

All that remains is to show that elements outside these two intervals are fixed by $f$. Let $\sigma \in \pos(\PF_{n,n})$ be such
an element - then $\sigma$ covers $\sigma \wedge \rho(E)$  by splitting off $\{n\}$ and $\sigma \vee E$ covers $\sigma$ by joining 
the blocks containing $1$ and $n+1$. Since $\sigma\wedge\rho(E)$ and $\sigma \vee E$ are each fixed by $f$, the interval
$[\sigma \wedge \rho(E) , \sigma \vee E]$ is fixed setwise. If we can show that $\sigma$ is the unique element in this interval which
does not also lie in $[E,\hat{1}] \cup [\hat{0},\rho(E)]$, then we can conclude that $\sigma$ is fixed and we're done.

Suppose first that $n \sim n+1$ in $\sigma$. Then $1$, $n$, and $n+1$ lie in separate blocks in $\sigma \wedge \rho(E)$ and
share the same block in $\sigma \vee E$. The only way to obtain an intermediate element is to combine two of the blocks in 
$\sigma \wedge \rho(E)$; we obtain $\sigma$ by combining the blocks with $n$ and $n+1$. If we were to merge the $1$ and 
$n+1$ blocks (leaving $n$ a singleton) we would obtain an element in $[E,\hat{1}]\cap[\hat{0},\rho(E)]$, and if we merge the $1$
block with the $n$ block, the result is an element not in $\pos(\PF_{n,n})$. Hence, in this case the interval 
$[\sigma \wedge \rho(E) , \sigma \vee E]$ has two elements of height 1, each of which must be fixed.

Now suppose $n\not\sim n+1$ in $\sigma$. Then $1$, $n$, and $n+1$ live in distinct blocks in $\sigma \wedge \rho(E)$ as above,
but $1$ and $n+1$ share a block without $n$ in $\sigma \vee E$. Then there are only two possibilities for intermediate elements
in $[\sigma \wedge \rho(E) , \sigma \vee E]$, obtained by either merging $\{n\}$ with the rest of its block in $\sigma \vee E$ or by
combining the $1$ and $n+1$ blocks: the former results in $\sigma$ while the latter gives an element in 
$[E,\hat{1}]\cap[\hat{0},\rho(E)]$, so once again $\sigma$ is the unique element in this interval which lies outside of $[E,\hat{1}]$
and $[\hat{0},\rho(E)]$.

Therefore, we can see that $f$ must fix each element in $\pos(\PF_{n,n})$, and we're done.
\end{proof}

%
%

In the spirit of analyzing the symmetries and recursive structure of these posets, we also compute the M\"obius function of $\pos(\PF_{n,k})$.

\begin{defin}
Let $P$ be a poset. The \emph{M\"obius function} of $P$ is a map $\mu_P: P\times P \to \mathbb{Z}$ defined recursively as follows:
\[\mu_P(x,y) = 
\begin{cases} 
\hfil 1 & \text{if }x=y \\ 
\hfil 0 & \text{if }x>y \\ 
\displaystyle-\sum_{x\leq z < y}\mu_P(x,z) & \text{if } x < y 
\end{cases}\]
If $P$ has minimum and maximum elements $\hat{0}$ and $\hat{1}$ respectively, we simplify our notation by writing $\mu(P) \coloneqq \mu_P(\hat{0},\hat{1})$. If $P$ and $Q$ are two such posets, then we have the following properties:
\begin{enumerate}
\item $\mu(P\times Q) = \mu(P)\mu(Q)$
\item If $\phi : P \to Q$ is an order-reversing isomorphism, then we have $\mu_P(x,y) = \mu_Q(\phi(y),\phi(x))$.
\item $\displaystyle \sum_{\pi\in P} \mu_P(\hat{0},\pi) = 0$
\end{enumerate}
\end{defin}

As an example, the M\"obius function of $\NC_{n+1}$ is a pleasing computation which can be found in Kreweras'
 article introducing noncrossing partitions. \cite{kreweras}

\begin{prop}
  \label{mobius}
  The M\"obius function of $\NC_{n+1}$ is $(-1)^{n-1}C_n$.
\end{prop}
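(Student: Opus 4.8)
The plan is to derive the assertion from the classical recursion for $\mu(\NC_m)$ and a short generating‑function computation --- essentially Kreweras' original argument \cite{kreweras}. Write $\mu_m \coloneqq \mu(\NC_m)$; the goal is the identity $\mu_m = (-1)^{m-1}C_{m-1}$ for all $m\geq 1$ (so $\mu_1=1$), which evaluated at $m=n+1$ is the proposition.

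First I would record the interval structure that turns the problem into a recursion. For $\pi\in\NC_m$ with blocks $B_1,\dots,B_k$, a partition refining $\pi$ is obtained by independently choosing a noncrossing partition inside each block, so $[\hat{0},\pi]_{\NC_m}\cong\prod_{i=1}^{k}\NC_{|B_i|}$; by multiplicativity of the M\"obius function over direct products (property (1) above), $\mu_{\NC_m}(\hat{0},\pi)=\prod_{i=1}^{k}\mu_{|B_i|}$. Feeding this into property (3) applied to the poset $\NC_m$ gives, for every $m\geq 2$,
\[
  \sum_{\pi\in\NC_m}\ \prod_{B\in\pi}\mu_{|B|}\;=\;0,
\]
while the left-hand side equals $\mu_1=1$ when $m=1$. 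This family of identities determines all the $\mu_m$.

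Next I would translate this into generating functions via the standard recursive decomposition of a noncrossing partition. Given $\pi\in\NC_m$, let $B_0=\{\,1=j_1<j_2<\dots<j_r\,\}$ be the block containing $1$; the chords of $B_0$ cut the disk into $r$ arc-regions --- the $r-1$ gaps $\{j_t+1,\dots,j_{t+1}-1\}$ together with the tail $\{j_r+1,\dots,m\}$ --- and the noncrossing condition says exactly that $\pi$ restricts to an arbitrary noncrossing partition on each region, independently of the others. Writing $A(x)=\sum_{m\geq1}\mu_m x^m$ and $B(x)=\sum_{m\geq1}b_m x^m$ with $b_m=\sum_{\pi\in\NC_m}\prod_{B}\mu_{|B|}$, this decomposition yields the functional equation $B(x)=A\bigl(x(1+B(x))\bigr)$. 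But the previous paragraph shows $b_m=\delta_{m,1}$, hence $B(x)=x$ and therefore $A\bigl(x(1+x)\bigr)=x$. Inverting the substitution $y=x+x^2$ (via the branch $x=\tfrac12(\sqrt{1+4y}-1)$ vanishing at $0$) gives $A(y)=\tfrac12(\sqrt{1+4y}-1)$, and a direct check identifies this with $y\,C(-y)$ where $C(t)=\tfrac{1-\sqrt{1-4t}}{2t}=\sum_{j\geq0}C_j t^j$. Extracting the coefficient of $x^{m}$ gives $\mu_m=(-1)^{m-1}C_{m-1}$, and specializing $m=n+1$ completes the proof.

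I expect the only step with real content to be the recursive decomposition of a noncrossing partition by the block of $1$ and the verification that it is a size-preserving bijection, so that it translates cleanly into $B=A\bigl(x(1+B)\bigr)$; everything else (multiplicativity of $\mu$, the evaluation $b_m=\delta_{m,1}$, the power-series algebra) is routine. A slightly different endgame avoiding the functional equation is an induction on $m$: assuming the formula below $m$, the one-block term isolates $\mu_m$ in the displayed identity, and the remaining sum is evaluated by the same decomposition (now with weights $C_{|B|-1}$); alternatively one may simply cite \cite{kreweras}.
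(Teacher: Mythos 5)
Your argument is correct, but there is nothing in the paper to compare it against: the paper does not prove this proposition at all, it records it as a known ``pleasing computation'' and cites Kreweras \cite{kreweras}. What you wrote is essentially that cited argument --- the factorization $[\hat{0},\pi]\cong\prod_{B\in\pi}\NC_{|B|}$, multiplicativity of $\mu$, the identity $\sum_{\pi\in\NC_m}\prod_{B\in\pi}\mu_{|B|}=0$ for $m\geq 2$ (with value $1$ at $m=1$), and the decomposition by the block containing $1$, which does give $B(x)=A\bigl(x(1+B(x))\bigr)$, hence $A(x+x^2)=x$ and $\mu_m=(-1)^{m-1}C_{m-1}$ --- so you have supplied the proof that the paper outsources, and each step checks out (the alternative inductive endgame you sketch is also fine). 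One point you should not gloss over: your correct formula specializes at $m=n+1$ to $\mu(\NC_{n+1})=(-1)^{n}C_{n}$, not $(-1)^{n-1}C_n$, so it is not literally ``the proposition'' as displayed in the paper; the sign in the paper's statement is a slip (already for $n=1$, $\NC_2$ is a two-element chain with $\mu=-1$, whereas $(-1)^{n-1}C_n$ would give $+1$, and $\mu(\NC_4)=-C_3=-5$, not $+5$). Your final sentence silently identifies $(-1)^n$ with $(-1)^{n-1}$; you should instead note explicitly that your computation corrects the sign in the stated proposition.
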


In the case of $\pos(\PF_{n,k})$, we find a similarly interesting result.

\begin{thma}
The M\"obius function of $\pos(\PF_{n,k})$ is zero.
\end{thma}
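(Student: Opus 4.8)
The plan is to reduce everything to the case $k=n$ via Theorem~\ref{product} and then attack $\mu(\pos(\PF_{n,n}))$ directly. By Theorem~\ref{product}, $\pos(\PF_{n,k}) = \pos(\PF_{k,k}) \times B_{n-k}$ for $n > k$, so by multiplicativity of the M\"obius function, $\mu(\pos(\PF_{n,k})) = \mu(\pos(\PF_{k,k})) \cdot \mu(B_{n-k})$. Since $\mu(B_{n-k}) = (-1)^{n-k}$ is nonzero, it suffices to prove that $\mu(\pos(\PF_{n,n})) = 0$ for all $n$. (For $n=2$ one can check the base case by hand.)

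For $\pos(\PF_{n,n})$, the first thing I would try is property (3): $\sum_{\pi \in \pos(\PF_{n,n})} \mu(\hat0, \pi) = 0$. Rearranging, $\mu(\hat0, \hat1) = -\sum_{\pi < \hat1} \mu(\hat0, \pi)$, so it suffices to show that the terms $\mu(\hat0,\pi)$ over $\pi < \hat1$ cancel in pairs. The natural pairing is the order-reversing involution $\rho$ from the proof of Theorem~\ref{selfdual}: by property (2), $\mu(\hat0,\pi) = \mu(\rho(\pi), \hat1)$, which is $0$ unless $\rho(\pi) = \hat1$, i.e. unless $\pi = \hat0$. That gives $\mu(\hat0,\hat1) = -\mu(\hat0,\hat0) = -1$, which is wrong — so this naive approach fails, and the resolution must come from a more careful structural argument. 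The correct approach is to use the interval structure from Theorem~\ref{intervals}: every proper interval $[\hat0,\pi]$ with $\pi < \hat1$ is a direct product of the form $\pos(\PF_{r,r}) \times \prod_i \NC_{m_i}$ or $\prod_i \NC_{m_i}$. In the first case, $\mu(\hat0,\pi) = \mu(\pos(\PF_{r,r})) \cdot \prod_i \mu(\NC_{m_i})$; by induction on $n$ (since $r < n$), $\mu(\pos(\PF_{r,r})) = 0$, so $\mu(\hat0,\pi) = 0$. In the second case $\mu(\hat0,\pi) = \prod_i (-1)^{m_i - 1} C_{m_i - 1} \neq 0$.

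So the real content is: after discarding the vanishing terms, show $\sum \mu(\hat0,\pi)$ over the remaining $\pi$ (those whose interval $[\hat0,\pi]$ is a pure product of noncrossing partition lattices) is zero. An element $\pi < \hat1$ lies in this "good" set precisely when neither of the obstructing conditions $\{n,n+1\}\in\pi$, $1\sim_\pi n$ with $\{n+1\}\in\pi$, nor the mixed condition from Theorem~\ref{intervals}'s proof ($n\sim_\pi n+1$ together with room to create an $n$-label below) applies — concretely, $\pi$ is good iff $n \not\sim_\pi n+1$ and not already excluded, or $n \sim n+1$ but $\{n+1\} \notin \hat 0$... I need to pin this down: from the proof of Theorem~\ref{intervals}, $[\hat0,\pi] = [\hat0,\pi]_{\NC_{n+1}}$ exactly when $n\not\sim_\pi n+1$ or $\{n+1\}\notin\hat0$; since $\{n+1\}\in\hat0$ always, the interval is a pure $\NC$-product iff $n \not\sim_\pi n+1$. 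So the good elements are exactly $\{\pi \in \pos(\PF_{n,n}) : \pi < \hat1,\ n\not\sim_\pi n+1\}$, and for these $\mu(\hat0,\pi) = \mu_{\NC_{n+1}}(\hat0,\pi)$.

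Thus the identity to prove becomes
\[
\mu(\pos(\PF_{n,n})) = -\sum_{\substack{\pi \in \NC_{n+1} \\ \pi < \hat1,\ n\not\sim_\pi n+1}} \mu_{\NC_{n+1}}(\hat0,\pi),
\]
and I want to show the right-hand side is $0$, i.e. $\sum_{n\not\sim_\pi n+1} \mu_{\NC_{n+1}}(\hat0,\pi) = 0$ where the sum is over \emph{all} $\pi\in\NC_{n+1}$ (the term $\pi=\hat1$ has $n\sim n+1$, so including or excluding it makes no difference; the convention $\mu(\hat0,\hat1)$ on the left matches). Now apply property (3) in $\NC_{n+1}$: $\sum_{\pi\in\NC_{n+1}}\mu_{\NC_{n+1}}(\hat0,\pi) = 0$, so it is equivalent to show $\sum_{\pi:\ n\sim_\pi n+1} \mu_{\NC_{n+1}}(\hat0,\pi) = 0$. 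The set $\{\pi \in \NC_{n+1} : n \sim_\pi n+1\}$ is itself an order ideal? No — it's a filter (upward closed), isomorphic as a poset to $\NC_n$ via merging $n,n+1$, with its bottom element $E' := \{\{1\},\dots,\{n-1\},\{n,n+1\}\}$. Writing each such $\pi$ as sitting above $E'$, one has $\mu_{\NC_{n+1}}(\hat0,\pi) = \sum_{\hat0 \le \kappa \le E'} (\text{something})$... the cleanest route: $\sum_{\pi \ge E'} \mu_{\NC_{n+1}}(\hat0,\pi) = \sum_{\pi \ge E'} \sum_{\hat0\le\kappa\le\pi}\mu(\hat0,\kappa)\cdot[\text{incidence}]$ is getting complicated.

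Instead I would use the defining recursion at $E'$ read backwards, or better: the function $g(\pi) := \mu_{\NC_{n+1}}(\hat0, \pi)$ restricted to the filter $[E',\hat1]\cong\NC_n$; summing $g$ over a filter is not automatically zero, but summing $\mu_{\NC_{n+1}}(E',\pi)$ over $[E',\hat1]$ is zero by property~(3) applied in $\NC_n$. So I would express $\sum_{\pi\ge E'}\mu_{\NC_{n+1}}(\hat0,\pi)$ in terms of convolution of M\"obius functions: $\mu_{\NC_{n+1}}(\hat0,\pi) = \sum_{\hat0\le z\le\pi}\mu_{\NC_{n+1}}(\hat0, z)\,\delta$... the right tool is that $\sum_{\hat 0 \le \pi \le \hat 1} \mu(\hat0,\pi)\,\zeta(\pi,\hat1) = \delta(\hat0,\hat1)$, and I want to insert the indicator of $n\sim_\pi n+1$. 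Concretely: the indicator $[\,n\sim_\pi n+1\,]$ equals $\zeta(E',\pi)$ in $\NC_{n+1}$ since $n\sim_\pi n+1 \iff \pi \ge E'$. Therefore
\[
\sum_{\pi\in\NC_{n+1}} \mu_{\NC_{n+1}}(\hat0,\pi)\,\zeta(E',\pi) \;=\; (\mu * \zeta)(\hat0,E') \;=\; \delta(\hat0, E') \;=\; 0,
\]
since $\hat0 \ne E'$ (as $n \ge 2$). This is exactly $\sum_{\pi:\,n\sim_\pi n+1}\mu_{\NC_{n+1}}(\hat0,\pi) = 0$, which is what we needed.

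Assembling: by property~(3) in $\NC_{n+1}$ and the displayed vanishing, $\sum_{\pi:\,n\not\sim_\pi n+1}\mu_{\NC_{n+1}}(\hat0,\pi) = 0$; combined with Theorem~\ref{intervals} (giving $\mu_{\pos(\PF_{n,n})}(\hat0,\pi) = 0$ for $\pi$ with $n\sim_\pi n+1$ and $\pi<\hat1$, by the induction hypothesis $\mu(\pos(\PF_{r,r}))=0$ with $r<n$, and $\mu_{\pos(\PF_{n,n})}(\hat0,\pi) = \mu_{\NC_{n+1}}(\hat0,\pi)$ otherwise) and property~(3) in $\pos(\PF_{n,n})$ itself, we get $\mu(\pos(\PF_{n,n})) = 0$. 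Then Theorem~\ref{product} and multiplicativity finish the general case.

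\medskip

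\noindent\textbf{What I'd present as the write-up.} Set up induction on $n$; base case $n=2$ by inspection. Inductive step: (i) reduce $k<n$ to $k=n$ by Theorem~\ref{product}; (ii) in $\pos(\PF_{n,n})$, apply property~(3): $\mu(\pos(\PF_{n,n})) = -\sum_{\hat0\le\pi<\hat1}\mu_{\pos(\PF_{n,n})}(\hat0,\pi)$; (iii) split the sum by whether $n\sim_\pi n+1$; (iv) for $\pi$ with $n\sim_\pi n+1$, invoke Theorem~\ref{intervals}: $[\hat0,\pi] \cong \pos(\PF_{r,r})\times\prod_i\NC_{m_i}$ with $r<n$, so the term is $0$ by induction; (v) for $\pi$ with $n\not\sim_\pi n+1$, the interval $[\hat0,\pi]$ in $\pos(\PF_{n,n})$ coincides with the one in $\NC_{n+1}$, so the term equals $\mu_{\NC_{n+1}}(\hat0,\pi)$; (vi) observe $n\sim_\pi n+1 \iff \pi\ge E'$ where $E' = \{\{1\},\dots,\{n-1\},\{n,n+1\}\}$, so $\sum_{\pi:\,n\sim_\pi n+1}\mu_{\NC_{n+1}}(\hat0,\pi) = (\mu_{\NC_{n+1}}*\zeta_{\NC_{n+1}})(\hat0,E') = 0$ because $\hat0 \ne E'$; (vii) therefore $\sum_{\pi:\,n\not\sim_\pi n+1,\ \pi\le\hat1}\mu_{\NC_{n+1}}(\hat0,\pi) = 0$ by property~(3) in $\NC_{n+1}$, and the term $\pi=\hat1$ has $n\sim n+1$ so subtracting it changes nothing; hence the sum in (ii) is $0$ and $\mu(\pos(\PF_{n,n})) = 0$.

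\medskip

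\noindent\textbf{Main obstacle.} The delicate point is step (v)–(vi): correctly identifying \emph{which} $\pi < \hat1$ have $[\hat0,\pi]_{\pos(\PF_{n,n})} = [\hat0,\pi]_{\NC_{n+1}}$, and realizing that the complementary set is precisely the principal filter above $E'$, so that the alternating sum over it is a M\"obius--zeta convolution evaluated off the diagonal and hence vanishes. Getting the edge cases right ($\pi = \hat1$; the elements in $L_1\cup L_2$ that don't even lie in $\pos(\PF_{n,n})$ and so shouldn't appear in the sum at all; confirming $\{n+1\}\in\hat0$ is automatic) is where the bookkeeping must be careful. Everything else is a routine application of the stated properties of $\mu$.
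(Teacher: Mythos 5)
Your skeleton --- induct on $n$, reduce to $\pos(\PF_{n,n})$ by Theorem~\ref{product}, use Theorem~\ref{intervals} to kill the terms $\mu(\hat0,\pi)$ whose interval carries a $\pos(\PF_{r,r})$ factor, and convert the surviving terms to M\"obius values of $\NC_{n+1}$ --- is the same as the paper's (the paper splits on $\{n+1\}\in\pi$ rather than on $n\sim_\pi n+1$; these are dual conditions under $\rho$). But step (vi), which carries all the remaining content, is wrong. The sum $\sum_{\pi}\mu_{\NC_{n+1}}(\hat0,\pi)\,\zeta(E',\pi)$ runs over the principal \emph{filter} $[E',\hat1]$, whereas the convolution $(\mu*\zeta)(\hat0,E')=\sum_{\hat0\le z\le E'}\mu(\hat0,z)\zeta(z,E')$ runs over the lower interval $[\hat0,E']$; you have convolved in the wrong direction, and the identity $\mu*\zeta=\delta$ says nothing about a filter sum. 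Indeed the quantity you need is genuinely nonzero: in $\NC_4$ (so $n=3$, $E'=\{\{1\},\{2\},\{3,4\}\}$) the elements with $3\sim 4$ are $E'$, $\{\{1,2\},\{3,4\}\}$, $\{\{1,3,4\},\{2\}\}$, $\{\{2,3,4\},\{1\}\}$, and $\hat1$, with $\mu_{\NC_4}(\hat0,\cdot)$ equal to $-1,\,1,\,2,\,2,\,-5$ respectively, summing to $-1$, not $0$.

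There is a second, compensating omission, which you flagged as ``bookkeeping'' but never carried out: property~(3) is applied in $\pos(\PF_{n,n})$, so the sum over $\pi$ with $n\not\sim_\pi n+1$ must exclude the elements of $\NC_{n+1}$ that are not in $\pos(\PF_{n,n})$, namely $L_2=\{\pi:\{n+1\}\in\pi,\ 1\sim_\pi n\}$ from Proposition~\ref{pnn} (all of $L_1$ has $n\sim n+1$, so only $L_2$ matters here). Its contribution $\sum_{\pi\in L_2}\mu_{\NC_{n+1}}(\hat0,\pi)$ is also nonzero --- it equals $+1$ in the example above, coming from $\{\{1,3\},\{2\},\{4\}\}$ and $\{\{1,2,3\},\{4\}\}$ --- and is exactly what cancels the $-1$ from the filter sum. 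So the proof cannot be completed as written: you must both replace the false vanishing claim of step (vi) with an actual evaluation of $\sum_{\pi\ge E'}\mu_{\NC_{n+1}}(\hat0,\pi)$ and carry the $L_2$ correction, and then show these two nonzero quantities cancel. Steps (i)--(v) are fine; in (iv) you should also note that for $\pi\in\pos(\PF_{n,n})$ with $n\sim_\pi n+1$ and $\pi<\hat1$ the block containing $n$ and $n+1$ has size at least $3$ (size $2$ would put $\pi$ in $L_1$), so the factor $\pos(\PF_{l,l})$ has $2\le l<n$ and the induction hypothesis genuinely applies.
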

\begin{proof}
Since the M\"obius function respects direct products, it suffices to prove this for $\pos(\PF_{n,n})$; the first case is straightforward since $\pos(\PF_{2,2})$ is simply a 3-element chain.

Suppose $\mu(\pos(\PF_{k,k})) = 0$ for all $k<n$. For ease of notation, we define
\[P_{n,n} \coloneqq \pos(\PF_{n,n})\]
for this proof only. By definition, we have
\[\mu(P_{n,n}) = -\sum_{\pi < \hat{1}} \mu_{P_{n,n}}(\hat{0},\pi).\]
Now, if $\{n+1\} \not\in \pi$, then $n \not\sim n+1$ in $\rho(\pi)$ and by Theorem \ref{intervals}, we know that
$[\rho(\pi),\hat{1}]$ contains a factor of $\pos(\PF_{r,r})$ for some $r<n$, so $\mu([\rho(\pi),\hat{1}]) = 0$. Since $\rho$ is an order-reversing isomorphism, we can then compute
\[\mu_{P_{n,n}}(\hat{0},\pi) = \mu_{P_{n,n}}(\rho(\pi),\hat{1}) = 0,\]
so it suffices to compute the M\"obius function for elements of $P_{n,n}$ which contain $\{n+1\}$. 

Applying Theorem \ref{intervals} again, we see that $\mu_{P_{n,n}}(\hat{0},\pi) = \mu_{\NC_{n+1}}(\hat{0},\pi)$ when $\pi$ contains the singleton $\{n+1\}$, allowing us to work over $\NC_{n+1}$:
\begin{align*}
\mu_{P_{n,n}}(\hat{0},\hat{1}) &= -\sum_{\mathclap{\substack{\pi\in P_{n,n} \\ \{n+1\} \in \pi}}} \mu_{P_{n,n}}(\hat{0},\pi) \\
&= -\sum_{\mathclap{\substack{\pi \in P_{n,n} \\ \{n+1\}\in \pi}}} \mu_{\NC_{n+1}} (\hat{0},\pi) \\
&= -\sum_{\mathclap{\substack{\pi \in \NC_{n+1} \\ \{n+1\}\in \pi}}} \mu_{\NC_{n+1}} (\hat{0},\pi)
+ \sum_{\mathclap{\substack{\pi \in \NC_{n+1} \\ \{n+1\}\in \pi \\ 1\sim_\pi n}}} \mu_{\NC_{n+1}} (\hat{0},\pi)
\end{align*}
The terms on the right-hand side are sums of M\"obius functions over the entirety of two posets (isomorphic to $\NC_n$ and $\NC_{n-1}$ respectively), so each is zero and we're done.
\end{proof}

%
%

\begin{defin}
  A poset $P$ with height $n$ is said to have a \emph{symmetric chain decomposition} (SCD) if the elements of $P$ can be partitioned into saturated chains so that ranks of the minimum and maximum in each chain sum to $n$.
\end{defin}

It follows quickly from this definition that if posets $P$ and $Q$ have a symmetric chain decomposition, then so 
does $P\times Q$. 

\begin{thm}
  $\pos(\PF_{n,k})$ admits a symmetric chain decomposition.
\end{thm}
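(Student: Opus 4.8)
First I would reduce to the diagonal case. By Theorem~\ref{product} we have $\pos(\PF_{n,k}) \cong \pos(\PF_{k,k}) \times B_{n-k}$, and since a two-element chain trivially has a symmetric chain decomposition and the property is preserved by direct products (as noted just before the statement), $B_{n-k}$ has one. Hence the theorem follows as soon as we produce a symmetric chain decomposition of $\pos(\PF_{n,n})$ for every $n \ge 2$.

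For $\pos(\PF_{n,n})$ I would argue by induction on $n$, the base case $n=2$ being the three-element chain. The natural raw material for the inductive step is Theorem~\ref{intervals}: combined with the induction hypothesis and the classical fact that each $\NC_m$ carries a symmetric chain decomposition (Simion and Ullman), it equips every \emph{proper} interval of $\pos(\PF_{n,n})$ with one, and the task is to splice these interval-level decompositions into a global one. Two features of the poset should organize the splicing. One is the order-reversing involution $\rho$ of Theorem~\ref{selfdual}. The other is the decomposition implicit in Proposition~\ref{pnn} and in the proof that the only order-preserving automorphism is the identity: the set $\{\pi : \{n\}\in\pi\}$ is an order ideal isomorphic to $\NC_n$, its $\rho$-image $\{\pi : 1\sim n+1\}$ is an order filter isomorphic to $\NC_n$, and every remaining element $\sigma$ is the unique ``new'' element of a diamond $[\sigma\wedge\rho(E),\ \sigma\vee E]$ bridging the two. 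I would try to build the final decomposition out of a symmetric chain decomposition of the $\NC_n$ ideal, one of the $\NC_n$ filter, and these bridging diamonds, threading chains of one into chains of the next across single covering relations of $\pos(\PF_{n,n})$.

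The threading is the crux, and I expect it to be the main obstacle, because a naive version of it provably fails for two reasons. First, the subposets one is tempted to recurse on directly — for instance the complementary filter of the $\NC_n$ ideal — need not be graded, so ``an SCD of it'' is not even meaningful without extra care. Second, joining two symmetric chains by a single covering relation yields a chain symmetric in the ambient poset only when that relation sits at the middle rank; consequently the chains of two independently chosen symmetric chain decompositions cannot simply be matched up, and the two families must instead be constructed \emph{in coordination}, with the endpoints of every chain controlled so that the concatenations are symmetric about rank $n/2$ and nothing is left over on either side. I therefore expect the workable route to be an explicit recursive construction of the symmetric chains of $\pos(\PF_{n,n})$ that descends through the interval decomposition of Theorem~\ref{intervals} while tracking, chain by chain, where each one begins and ends. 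This is the higher-order analogue of the explicit order isomorphism $\Phi\colon[\hat 0,\tau]\to[\sigma,\hat 1]$ with $\pi\le\Phi(\pi)$ used in the proof of Theorem~\ref{product} — which is exactly a one-covering-relation splice of two copies of $\pos(\PF_{n-1,k})$ — and carrying out the corresponding, considerably more intricate, bookkeeping in the case $n=k$ is where the real work lies.
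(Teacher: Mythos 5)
Your reduction to $\pos(\PF_{n,n})$ via Theorem \ref{product} and the product-preservation of SCDs matches the paper, as does the choice of tools (induction on $n$, Simion--Ullman's SCD of $\NC_m$, the interval structure). But after that point the proposal stops being a proof: you correctly diagnose that the whole difficulty is how to splice interval-level SCDs into a global one, you observe that the naive splicing fails, and then you leave the construction that would resolve this unexecuted. That construction is exactly the content of the paper's proof, so the missing step is not bookkeeping around a known skeleton --- it is the key idea. Moreover, the skeleton you do propose (the order ideal $\{\pi : \{n\}\in\pi\}$, its $\rho$-image filter $\{\pi : 1\sim n+1\}$, and the bridging diamonds $[\sigma\wedge\rho(E),\sigma\vee E]$ from the automorphism argument) is not what the paper uses and, as you note yourself, offers no control over where chains start and end, so it does not obviously lead anywhere.

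The paper avoids the threading problem entirely by choosing a finer partition in which every piece (or a designated union of two pieces) is already \emph{symmetrically embedded}, so no coordination between independently chosen SCDs is ever needed. Mirroring Simion and Ullman, it partitions $\pos(\PF_{n,n})$ according to the second-smallest element $i$ of the block containing $1$, giving subposets $R_2,\ldots,R_{n+1}$, together with $R_1$ (the interval below $\{\{1\},\{2,\ldots,n+1\}\}$) and one extra interval $R_1'$. For $i\in\{3,\ldots,n-1\}$ each $R_i$ is an interval isomorphic to a product of smaller $\pos(\PF_{l,l})$'s and $\NC_m$'s sitting from rank $1$ to rank $n-1$, so induction plus Simion--Ullman finishes it; $R_1\cup R_2$ is a copy of $B_1\times\pos(\PF_{n-1,n-1})$ spanning ranks $0$ to $n$; $R_1'$ and $R_n$ are paired with two intervals $A,B\subseteq R_{n+1}$ (each $\cong\NC_{n-2}$) so that the unions are again symmetric products; and the leftover $R_{n+1}-(A\cup B)$ is partitioned into intervals $D_j$ (by the least element related to $n$) that are products of noncrossing partition lattices spanning ranks $2$ to $n-2$. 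Identifying this partition --- not refining your ideal/filter/diamond picture --- is the work your proposal defers, so as it stands the argument has a genuine gap.
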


\begin{proof}
  By the observation above, it suffices to show that $\pos(\PF_{n,n})$ has an SCD, and we will do so by mirroring Simion and 
  Ullman's proof for $\NC_n$. \cite{simion} 
  
  First, notice that we can easily find SCDs when $n$ is 1,2, or 3 by glancing at the Hasse diagram. Now, suppose that this result 
  holds for values less than some fixed $n>3$ and decompose $\pos(\PF_{n,n})$ as follows:
  \begin{align*}
    R_1 &= [\hat{0},\{\{1\},\{2,\ldots ,n+1\}\}] \\
    R_1^\prime &= [\{\{1\},\{2,n\},\{3\},\ldots ,\{n-1\},\{n+1\}\},\{\{1\},\{2,\ldots ,n\},\{n+1\}\}] \\
    R_i &= \{\pi \mid i = \min\{j \mid i\sim j, j\neq 1\}\} \text{ for each } i \in\{2,\ldots ,n+1\}
  \end{align*}
  That is, $R_i$ is the subposet consisting of elements for which $i$ is the second-smallest number in the block containing 1.
  We claim that $\pos(\PF_{n,n})$ is the disjoint union of these subposets.
  
  It is straightforward to see from the definition that each $R_i$ is disjoint from $R_1$, $R_1^\prime$, and each other $R_j$. 
  Observing that $R_1 \cap R_1^\prime = \emptyset$ amounts to noticing that no element of $R_1$ could be less than the 
  maximum element in $R_1^\prime$. Hence, this is a disjoint collection of subposets for $\pos(\PF_{n,n})$.
  
  As for their union, notice that for any $\pi \in \pos(\PF_{n,n})$, there are three cases. If $\{1\} \not\in \pi$, then $\pi \in R_i$ for some
  $i \neq 1$. If not, then we either have $2\sim n$ in $\pi$, in which case $\pi \in R_1^\prime$, or $2\not\sim n$ and thus $\pi\in R_1$.
  So $\pos(\PF_{n,n})$ is the disjoint union of these subposets.
  
  Observing each $R_i$ as an interval for $i \in \{3,\ldots ,n-1\}$, we can see that each is isomorphic to a direct product of $\pos(\PF_{l,l})$ and/or $\NC_m$ 
  for values of $l$ and $m$ less than $n$, so each has a symmetric chain decomposition by our inductive hypothesis and the
  analogous result for $\NC_n$. \cite{simion} Additionally, each $R_i$ ranges from height 1 to height $n-1$ in $\pos(\PF_{n,n})$,
  so it is ``symmetrically embedded". Additionally, $R_1$ and $R_2$ are each isomorphic to $\pos(\PF_{n-1,n-1})$ and embedded
  so that $R_2$ covers $R_1$ - that is, $R_1\cup R_2$ is the direct product of a 2-element chain and $\pos(\PF_{n-1,n-1})$.
  Hence it admits an SCD which is also symmetrically embedded from height 0 to height $n$.
  
  It then remains to show that $R_1^\prime$, $R_n$, and $R_{n+1}$ together admit a symmetrically embedded SCD. To this end,
  define two subposets of $R_{n+1}$ as follows:
  \begin{align*}
    A &= [\{\{1,n+1\},\{2,n\},\{3\},\ldots ,\{n-1\}\},\{\{1,n+1\},\{2,\ldots ,n\}\}] \\
    B &= [\{\{1,n+1\},\{2\}, \ldots ,\{n\}\},\{\{1,n+1\},\{2,\ldots ,n-1\},\{n\}\}]
  \end{align*}
  Then we can see that $A\cong R_1^\prime$, $B\cong R_n$, and each is isomorphic to $\NC_{n-2}$. Moreover, $A$ coverse
  $R_1^\prime$ and $R_n$ covers $B$ in such a way that $A\cup R_1^\prime$ and $B\cup R_n$ are each isomorphic to the
  direct product $B_2 \times \NC_{n-2}$. Thus, each has an SCD and ranges from height 1 to $n-1$, so they are symmetrically
  embedded.
  
  All that remains is $R_{n+1} - (A\cup B)$, which can be expressed as
  \[R_{n+1} - (A\cup B) = \{ \pi \mid \{1,n+1\} \in \pi, \{n\}\not\in\pi, 2\not\sim n \text{ in } \pi\}.\]
  Similar to how we began, we can partition this subposet into the intervals 
  \[D_j = \{\pi \in R_{n+1} - (A\cup B) \mid j = \min\{k \mid n\sim k\}\},\]
  each of which is isomorphic to a direct product of noncrossing partition lattices. Therefore, each has an SCD which ranges from
  height 2 to heignt $n-2$.
  
  Since we have accounted for all subposets in our decomposition, we can conclude that $\pos(\PF_{n,n})$ admits a symmetric
  chain decomposition.
\end{proof}


\section{Order Complexes}

The study of order complexes gives us a topological way to understand $\pos(\PF_{n,k})$, possibly leading to greater insight for 
this decomposition.

\begin{defin}
  Let $P$ be a poset and define the \emph{order complex} $\Delta(P)$ to be the simplicial complex constructed by associating a $k$-simplex to each finite chain $x_0 < x_1 < \cdots < x_k$ in $P$ in the natural way.
\end{defin}

Notice that maximal chains in $P$ correspond to top-dimensional simplices in $\Delta(P)$, which thus can be labeled by parking functions in the case when $P$ is $\NC_{n+1}$. In particular, our decomposition of $\PF_n$ produces a decomposition of the facets of $\Delta(\NC_{n+1})$ and we can view our order-reversing map $\rho$ as a symmetry of this topological space.

In general, when $P$ is bounded we refer to the edge in the order complex which corresponds to the chain $\hat{0} < \hat{1}$ by the \emph{diagonal}. Notice that then $\Delta(P)$ is contractible since each of $\hat{0}$ and $\hat{1}$ is a cone point, so we may deformation retract the complex to either one. In order to see the combinatorial data of $P$ topologically, we disregard $\hat{0}$ and $\hat{1}$ and examine $\Delta(\overline{P})$, where 
$\overline{P} = P - \{\hat{0},\hat{1}\}$.

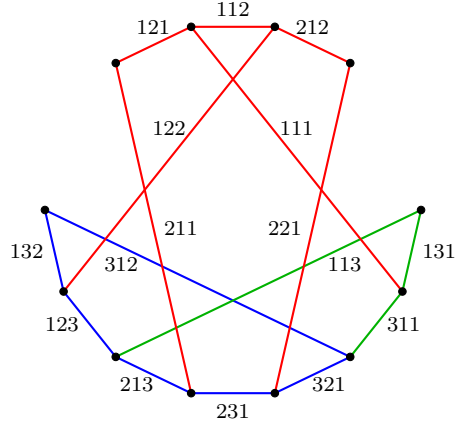
\begin{figure}
	\begin{tikzpicture}
		\foreach \n in {2,...,5} {
			\coordinate (a\n) at (180-\n*360/14:2.5cm) {};
		}
		\foreach \n in {0,...,7} {
			\coordinate (b\n) at (-\n*360/14:2.5cm) {};
		}
		
		\begin{scope}[midway]
			\draw[BlueLine] 
			(b2) -- node[near end, right=3pt]{\scriptsize 321}
			(b3) -- node[below]{\scriptsize 231}
			(b4) -- node[near start,left=3pt]{\scriptsize 213}
			(b5) -- node[left]{\scriptsize 123}
			(b6) -- node[left]{\scriptsize 132}
			(b7) -- node[near start,below]{\scriptsize 312} (b2);
				
			\draw[GreenLine] 
			(b5) -- node[near end, below]{\scriptsize 113}
			(b0) -- node[right]{\scriptsize 131}
			(b1) -- node[right]{\scriptsize 311} (b2);
			
			\draw[RedLine] 
			(b4) -- node[right]{\scriptsize 211} 
			(a2) -- node[above]{\scriptsize 121} 
			(a3) -- node[above]{\scriptsize 112}
			(a4) -- node[above]{\scriptsize 212}
			(a5) -- node[left]{\scriptsize 221} (b3);
			\draw[RedLine] (b6) -- node[above=5pt]{\scriptsize 122} (a4);
			\draw[RedLine] (b1) -- node[above=5pt]{\scriptsize 111} (a3);
		\end{scope}
		
		\foreach \n in {2,...,5} {
			\draw (a\n) node [dot] {};
		}
		\foreach \n in {0,...,7} {
			\draw (b\n) node [dot] {};
		}
	\end{tikzpicture}
	\caption{The link of $\NC_4$ decomposed into the links of $\pos(\sym_3)$, $\pos(\PF_{3,2})$, and $\pos(\PF_{3,3})$. The action of our unique order-reversing map $\rho$ on the maximal chains of $\NC_4$ can be realized geometrically as a reflection through the vertical axis.}
\end{figure}

It turns out that $\Delta(\overline{P})$ has another topological name - it is the link of the diagonal in $\Delta(P)$. As such, we refer to $\Delta(\overline{P})$ as the \emph{link complex} or simply \emph{link} of $P$. There is an intimate connection between the topology of this complex and the combinatorics of $P$.

\begin{thm}[Philip Hall's Theorem]
  Let $P$ be a bounded poset. Then
  \[\mu(P) = \widetilde{\chi}(\Delta(\overline{P})),\]
  where $\widetilde{\chi}$ is the reduced Euler characteristic.
\end{thm}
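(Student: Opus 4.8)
The plan is to deduce the identity from the classical \emph{chain formula} for the M\"obius function and then compare two alternating sums term by term. Assume $\hat{0} \neq \hat{1}$ (the one-point case being trivial), and for $k \geq 0$ write $c_k$ for the number of chains $\hat{0} = x_0 < x_1 < \cdots < x_k = \hat{1}$ of length $k$ in $P$, so $c_0 = 0$ and $c_1 = 1$. The first and main step is to prove that $\mu(P) = \sum_{k \geq 1}(-1)^k c_k$. The slickest argument runs through the incidence algebra of $P$: let $\delta$ be the Kronecker delta, $\zeta$ the zeta function, and $\eta = \zeta - \delta$ the strict-order function, and check by an easy induction that $\eta^k(\hat{0},\hat{1}) = c_k$. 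Since $P$ is finite, $\eta$ is locally nilpotent --- one has $\eta^k(x,y) = 0$ once $k$ exceeds the length of the longest chain in $[x,y]$ --- so the expansion $\zeta^{-1} = (\delta+\eta)^{-1} = \sum_{k \geq 0}(-1)^k \eta^k$ is a finite sum on each interval and hence legitimate in the incidence algebra; as $\mu = \zeta^{-1}$, evaluating at $(\hat{0},\hat{1})$ gives the formula. Alternatively, one can prove the chain formula directly by induction on $|[\hat{0},\hat{1}]|$ from the recursive definition of $\mu$ given above, using a short double-counting argument.

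With the chain formula in hand, the remainder is a translation between chains and simplices. For $k \geq 1$, deleting the endpoints of a chain $\hat{0} = x_0 < \cdots < x_k = \hat{1}$ leaves a chain $x_1 < \cdots < x_{k-1}$ of $k-1$ elements of $\overline{P} = P \setminus \{\hat{0},\hat{1}\}$, i.e.\ a $(k-2)$-dimensional face of $\Delta(\overline{P})$; this is a bijection between length-$k$ chains of $P$ and faces of $\Delta(\overline{P})$ of dimension $k-2$, with the chain $\hat{0} < \hat{1}$ itself corresponding to the empty face. So, writing $f_j$ for the number of $j$-dimensional faces and adopting the convention $f_{-1} = 1$, we have $c_k = f_{k-2}$ for every $k \geq 1$, and substituting with $j = k-2$ gives
\[\mu(P) = \sum_{k \geq 1}(-1)^k c_k = \sum_{k \geq 1}(-1)^k f_{k-2} = \sum_{j \geq -1}(-1)^j f_j = \widetilde{\chi}(\Delta(\overline{P})),\]
since $(-1)^{j+2} = (-1)^j$ and $\sum_{j \geq -1}(-1)^j f_j = -1 + \chi(\Delta(\overline{P}))$ is exactly the reduced Euler characteristic.

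The only real content lies in the chain formula; everything after it is bookkeeping. The single point needing care is the inversion $\zeta^{-1} = \sum(-1)^k \eta^k$ inside the incidence algebra, but local finiteness of $P$ makes each such sum finite, so there is no genuine obstacle. One should also be explicit about the convention that the empty chain of $\overline{P}$ counts as a face of $\Delta(\overline{P})$: this is precisely what pairs the term $c_1 = 1$ with $f_{-1} = 1$, and it is why the \emph{reduced} Euler characteristic, rather than the ordinary one, is the right invariant here.
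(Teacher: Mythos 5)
Your argument is correct, and it is the standard proof of this classical result (the paper states Hall's theorem without proof, as background): identify $\mu$ with $\zeta^{-1}$ in the incidence algebra, expand $(\delta+\eta)^{-1}=\sum_{k\ge 0}(-1)^k\eta^k$ using nilpotence of $\eta$ on finite intervals to get $\mu(P)=\sum_{k\ge 1}(-1)^k c_k$, and then reindex chains of $[\hat{0},\hat{1}]$ as faces of $\Delta(\overline{P})$, with the chain $\hat{0}<\hat{1}$ matching the empty face and forcing the \emph{reduced} Euler characteristic. Two small remarks: you should say ``finite bounded poset'' (finiteness is what you actually use, and is implicit in the paper's setting), and the one-element case is not ``trivial'' but degenerate --- there $\mu(P)=1$ while $\widetilde{\chi}$ of the empty order complex is $-1$ or $0$ depending on convention --- so it is better to simply exclude $\hat{0}=\hat{1}$ from the statement, as every application in the paper does.
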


Hence, by Theorem \ref{mobius}, we know that the link of $\pos(\PF_{n,k})$ has reduced Euler characteristic $0$. This, in combination with low-dimensional computations, gives compelling evidence for the following conjecture.

\begin{conj}
  The link of $\pos(\PF_{n,k})$ is contractible.
\end{conj}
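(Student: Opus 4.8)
The plan is to reduce to the diagonal case $n=k$ and then show that the link of $\pos(\PF_{n,n})$ is a wedge of spheres of a single dimension; combined with the vanishing of the Möbius function (hence of the reduced Euler characteristic) already established, this forces contractibility.

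\emph{Reduction to $\pos(\PF_{n,n})$.} By Theorem~\ref{product} we have $\pos(\PF_{n,k}) \cong \pos(\PF_{k,k}) \times B_{n-k}$, where $B_{n-k}$ is the $(n-k)$-fold product of the two-element chain $B_1$. I would invoke the standard homotopy identity for order complexes of products of bounded posets, $\Delta(\overline{P\times Q}) \simeq \Sigma\!\left(\Delta(\overline P) * \Delta(\overline Q)\right)$; taking $Q=B_1$, where $\overline{B_1}=\emptyset$ and $\Delta(\emptyset)=S^{-1}$ and the join with $S^{-1}$ is the identity, this specializes to $\Delta(\overline{P\times B_1})\simeq \Sigma\,\Delta(\overline P)$. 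Iterating $n-k$ times gives $\Delta(\overline{\pos(\PF_{n,k})})\simeq \Sigma^{\,n-k}\Delta(\overline{\pos(\PF_{n,n})})$, and since a suspension is contractible exactly when the underlying space is, the conjecture in general is equivalent to the assertion that $\Delta(\overline{\pos(\PF_{n,n})})$ is contractible for every $n\ge 2$.

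\emph{The diagonal case via shellability.} First observe that $\pos(\PF_{n,n})$ is graded of rank $n$: its Hasse diagram is by definition the union of maximal chains of $\NC_{n+1}$, so every cover relation joins consecutive ranks of $\NC_{n+1}$, and the $\NC_{n+1}$-rank therefore restricts to a rank function; consequently $\Delta(\overline{\pos(\PF_{n,n})})$ is pure of dimension $n-2$. If this complex is shellable, then it is homotopy equivalent to a wedge of $(n-2)$-spheres, and the number of spheres is $|\widetilde{\chi}(\Delta(\overline{\pos(\PF_{n,n})}))|=|\mu(\pos(\PF_{n,n}))|=0$ by Philip Hall's Theorem and the vanishing of the Möbius function. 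A wedge of no spheres is a point, so contractibility follows. To obtain shellability I would build a recursive atom ordering (equivalently a CL-labeling) by induction on $n$: by Theorem~\ref{intervals} every upper interval $[\sigma,\hat{1}]$ is a product $\prod_i\NC_{m_i}$ or $\pos(\PF_{r,r})\times\prod_i\NC_{m_i}$ with $r<n$, each factor of which is shellable — the $\NC_{m_i}$ classically, $\pos(\PF_{r,r})$ by induction — and shellability is inherited by products of bounded posets, so the task is to order the atoms of $\pos(\PF_{n,n})$ so that this interval data assembles into a valid recursive atom ordering; the order-reversing automorphism $\rho$ reduces the lower intervals to the upper ones.

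\emph{Main obstacle, and an alternative.} The genuine difficulty is the construction of the shelling (equivalently, of a discrete Morse matching with a single critical cell) for $\pos(\PF_{n,n})$ itself. Unlike $\NC_{n+1}$, this poset is neither a lattice nor an induced subposet of $\NC_{n+1}$, so the usual edge-labelings do not transfer, and the two mechanisms by which a maximal chain can fail to lie in $\pos(\PF_{n,n})$ — an $n$-label forced through the block $\{n,n+1\}$ (the set $L_1$) or through $1\sim n$ with $\{n+1\}$ a singleton (the set $L_2$) — must be reconciled within a single atom ordering, and checking the recursive-atom conditions across both regimes is where the real work sits. A route that avoids shellability is a direct Mayer–Vietoris (or nerve-lemma) induction: cover $\Delta(\overline{\pos(\PF_{n,n})})$ by the closed stars of the two distinguished vertices $E$ and $\rho(E)$ appearing in the proof that the only order-preserving automorphism is the identity, so that by Theorem~\ref{intervals} the link of each of these vertices is a join containing a $\pos(\PF_{r,r})$-factor whose link is contractible by induction; the sticking point there is controlling the part of the complex outside these stars, since passing to a subcomplex by deleting vertices breaks the boundedness the induction depends on. Either approach needs a structural input beyond what is proved here, which is why the statement is left as a conjecture.
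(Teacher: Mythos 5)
You should first note that the paper does not prove this statement at all: it is posed as a conjecture, supported only by the M\"obius computation (via Philip Hall's Theorem the link has reduced Euler characteristic zero) and by low-dimensional computations. So your proposal cannot be measured against a proof in the paper, and, as you yourself say in your last sentence, it is not a proof either. What you do establish correctly is the reduction: by Theorem \ref{product} and the standard homotopy identity $\Delta(\overline{P\times Q})\simeq \Sigma\bigl(\Delta(\overline{P})*\Delta(\overline{Q})\bigr)$ for bounded posets, the link of $\pos(\PF_{n,k})$ is an $(n-k)$-fold suspension of the link of $\pos(\PF_{k,k})$, so contractibility for the diagonal posets implies it for all of them. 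That is a genuine addition beyond what the paper records. One inaccuracy: your claim that ``a suspension is contractible exactly when the underlying space is'' is false in general (an acyclic complex with nontrivial perfect fundamental group has contractible suspension), so the two statements are not known to be equivalent; fortunately only the true direction is needed for your reduction. Your conditional argument is also fine as far as it goes: $\pos(\PF_{n,n})$ is graded of rank $n$ because every Hasse edge joins consecutive $\NC_{n+1}$-ranks and any bottom-to-top path in its Hasse diagram avoids $n$-labels, hence the link is pure of dimension $n-2$, and a shelling plus $\mu=0$ would force a wedge of zero spheres, i.e.\ a point.

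The genuine gap is exactly the step you flag: no shelling, recursive atom ordering, CL-labeling, discrete Morse matching, or nerve argument for $\pos(\PF_{n,n})$ is actually produced. Theorem \ref{intervals} gives you good upper and lower intervals, but shellability of a graded poset is not a consequence of shellability of its proper intervals; the compatibility conditions of a recursive atom ordering across the two excluded regimes $L_1$ and $L_2$ are precisely where the content lies, and nothing in the paper (nor in your sketch) supplies them. Likewise your Mayer--Vietoris alternative fails as stated because the closed stars of $E$ and $\rho(E)$ do not cover the link --- there are maximal chains through neither element --- which you acknowledge but do not repair. So the proposal should be read as a correct reduction to the case $k=n$ together with a plausible program, not as a proof; the statement rightly remains a conjecture.
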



\section{Other Settings}

While $\NC_{n+1}$ is an exceptionally fruitful setting for exploring our decomposition, there are other objects in bijection with $\PF_n$ with matching decompositions. Specifically, there are geometric ways to visualize $\PF_{n,k}$ with labeled Dyck paths and labeled rooted forests. If nothing else, these examples speak for the natural definition of $\PF_{n,k}$.

\begin{defin}
  A \emph{Dyck path} of length $2n$ is a lattice path in $\mathbb{Z}\oplus \mathbb{Z}$ from $(0,0)$ to $(n,n)$ which stays weakly above the diagonal $y=x$ and consists only of ``up" and ``right" steps. A \emph{labeled Dyck path} is a Dyck path for which the $n$ ``up" steps are distinctly labeled by the $\{1,\ldots ,n\}$.
\end{defin}

\begin{prop}
  \label{dyck}
  There is a one-to-one correspondence between labeled Dyck paths of length $2n$ and $\PF_n$.
\end{prop}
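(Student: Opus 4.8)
The plan is to construct an explicit bijection between labeled Dyck paths of length $2n$ and parking functions of length $n$, matching each labeled Dyck path to the parking function that records, for each label $i$, the height at which the $i$th up-step begins. More precisely, given a labeled Dyck path, read off the sequence $(a_1,\ldots,a_n)$ where $a_i$ is one more than the $x$-coordinate of the left endpoint of the up-step labeled $i$ (equivalently, the number of right-steps weakly to the left of that up-step, plus one). First I would check that this tuple is a parking function: the condition that the path stays weakly above $y=x$ says exactly that among the first $j$ up-steps encountered along the path (in path order, not label order), the $j$th one starts at height at most $j-1$, i.e. has $x$-coordinate at most $j-1$; rearranging the $a_i$ into the order the up-steps appear along the path gives a weakly increasing sequence whose $j$th term is at most $j$, which is the defining property of a parking function.

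Next I would construct the inverse map. Given a parking function $(a_1,\ldots,a_n)$, sort its entries into a weakly increasing sequence $b_1\le \cdots \le b_n$ with $b_j\le j$; this determines an (unlabeled) Dyck path by placing, for each height level, the appropriate number of up-steps — concretely the path has $b_j\le j$ guaranteeing it never crosses below the diagonal. Then assign labels: the up-steps at $x$-coordinate $c$ receive the labels $\{i : a_i = c+1\}$, and within a fixed $x$-coordinate we must fix a convention (say, assign labels in increasing order of $i$ from bottom to top) so that the map is well-defined and genuinely inverse to the forward map. I would then verify that these two constructions are mutually inverse, which is essentially bookkeeping once the conventions are pinned down.

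The main obstacle — really the only subtle point — is handling the ordering convention for up-steps that share the same $x$-coordinate, since the parking function only records the multiset of starting heights together with the label attached to each, and one must be careful that the forward and backward maps use compatible tie-breaking rules so that composing them yields the identity in both directions. Once that convention is fixed (increasing labels from bottom to top among up-steps at equal $x$-coordinate), the verification that each composite is the identity is a direct check. I would also remark that a cleaner way to see the bijection is to note that both sets have the same size, $(n+1)^{n-1}$ (for labeled Dyck paths this follows from the Catalan count of unlabeled paths together with a standard argument, or simply from the bijection itself once injectivity is established), so it suffices to prove the forward map is injective — but giving the explicit inverse is more satisfying and is the route I would take.
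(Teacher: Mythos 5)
Your proposal is correct and is essentially the paper's argument run in the opposite direction: the paper constructs the map $\PF_n \to$ labeled Dyck paths (put $b_i$ up-steps in column $i$, then label the bottom-most unlabeled up-step in column $a_j$ with $j$), which is exactly the inverse map you describe, and its ``bottom-most unlabeled'' rule enforces precisely your bottom-to-top increasing convention within each column. The one caveat---shared with the paper, whose stated definition of labeled Dyck path omits it---is that this increasing-within-column convention must be taken as part of the definition of a labeled Dyck path (not merely a tie-breaking rule for the inverse map), since otherwise the forward map forgets the vertical order of labels in a common column and no correspondence of this kind can be bijective ($n!\,C_n > (n+1)^{n-1}$).
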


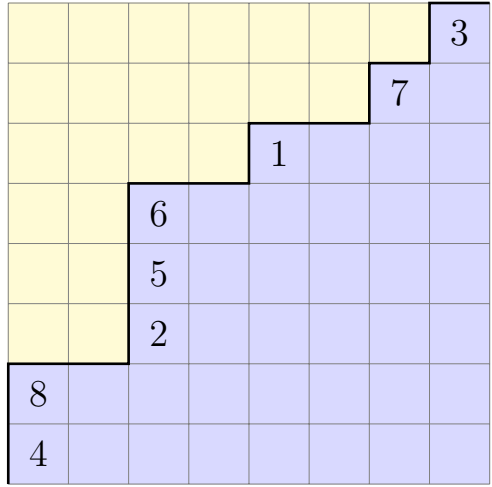
\begin{figure}
	\begin{tikzpicture}[scale=0.8]
		\draw [fill=yellow!20,yellow!20] (0,0) rectangle (8,8);
		\draw [fill=blue!15,blue!15] (0,0) rectangle (8,2);
		\draw [fill=blue!15,blue!15] (2,2) rectangle (8,5);
		\draw [fill=blue!15,blue!15] (4,5) rectangle (8,6);
		\draw [fill=blue!15,blue!15] (6,6) rectangle (8,7);
		\draw [fill=blue!15,blue!15] (7,7) rectangle (8,8);
		
		\draw [help lines] (0,0) grid (8,8);
		\draw[line width=1pt] 
		(0,0) -- (0,1) -- (0,2) -- (1,2) -- (2,2) -- (2,3) -- 
		(2,4) -- (2,5) -- (3,5) -- (4,5) -- (4,6) -- 
		(5,6) -- (6,6) -- (6,7) -- (7,7) -- (7,8) -- (8,8);
		\node () at (0.5,0.5) {\Large 4};
		\node () at (0.5,1.5) {\Large 8};
		\node () at (2.5,2.5) {\Large 2};
		\node () at (2.5,3.5) {\Large 5};
		\node () at (2.5,4.5) {\Large 6};
		\node () at (4.5,5.5) {\Large 1};
		\node () at (6.5,6.5) {\Large 7};
		\node () at (7.5,7.5) {\Large 3};
	
	\end{tikzpicture}
	\caption{The labeled Dyck path corresponding to the parking function $(5,3,8,1,3,3,7,1) \in \pos(\PF_{8,6})$}
\end{figure}

Given $(a_1,\ldots ,a_n) \in \PF_n$, let $b_i$ be the number of times $i$ appears in this parking function and
draw a path from $(0,0)$ to $(n,n)$ with $b_1$ steps up followed by one step to the right, then $b_2$ steps up followed by one step to the right, and so on. The restrictions on parking functions ensure that this path will stay weakly above the diagonal. Label this Dyck path as follows: label the bottom-most unlabeled ``up" step in column $a_1$ with 1. Label the same such step in column $a_2$ with a 2, and so on. This results in a labeled Dyck path, and this map is a bijection.

\begin{defin}
  Let $D_{n,k}$ be the set of labeled Dyck paths which correspond to the parking functions in $\PF_{n,k}$ via the bijection above,
  and let $D_{n,1}$ be those which correspond to parking functions identified with $\sym_n$.
\end{defin}

It follows immediately from the bijection that if $k$ is the largest number missing from a parking function $f$, then the corresponding labeled Dyck path has no up-step in the $k$th column, followed by $n-k$ columns each with exactly one up-step. In other words, we have characterized this decomposition with the following theorem.

\begin{thm}
  $D_{n,k}$ is the set of labeled Dyck paths of length $2n$ which end in a string of exactly $n-k$ ``up-right" pairs.
\end{thm}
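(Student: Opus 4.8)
The plan is to translate the combinatorial characterization of $\PF_{n,k}$ directly through the bijection of Proposition \ref{dyck}. Recall that a parking function $(a_1,\ldots,a_n)$ lies in $\PF_{n,k}$ precisely when $k \notin \{a_1,\ldots,a_n\}$ but $k+1,\ldots,n$ all appear. Under the bijection, the column structure of the Dyck path records the multiplicities $b_i = \#\{j : a_j = i\}$: the path takes $b_1$ up-steps, one right-step, $b_2$ up-steps, one right-step, and so on through $b_n$. So I would first observe that $i \notin \{a_1,\ldots,a_n\}$ is equivalent to $b_i = 0$, which is equivalent to the $i$th column of the Dyck path containing no up-step — i.e., the $i$th right-step immediately following the $(i-1)$st right-step with nothing between them.

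Next I would assemble these column-by-column facts into the claimed tail structure. If $k$ is the largest missing number, then for each $i \in \{k+1,\ldots,n\}$ we have $b_i \geq 1$. I would want to argue that in fact $b_i = 1$ for all such $i$: since the total number of up-steps is $n$ and column $k$ contributes $0$, the remaining $n$ up-steps are distributed among columns $1,\ldots,k-1,k+1,\ldots,n$; but the parking-function condition forces enough up-steps into the low columns that columns $k+1,\ldots,n$ can absorb no more than one each. Concretely, using the standard counting argument (the path stays weakly above the diagonal, so after the $j$th right-step the height is $\geq j$, meaning $b_1 + \cdots + b_j \geq j$), applied at $j = k$ we get $b_1 + \cdots + b_k \geq k$, and since $b_k = 0$ this is $b_1 + \cdots + b_{k-1} \geq k$; combined with $b_1 + \cdots + b_n = n$ and $b_{k+1},\ldots,b_n \geq 1$, we conclude $b_{k+1} = \cdots = b_n = 1$. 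This gives exactly $n-k$ trailing ``up-right'' pairs (one up-step, one right-step) preceded by a right-step with no up-step, for the empty $k$th column.

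For the converse, I would take a labeled Dyck path ending in exactly $n-k$ ``up-right'' pairs — meaning columns $k+1,\ldots,n$ each have one up-step, and column $k$ has none (the ``exactly'' rules out column $k$ also having a single up-step, which would extend the tail) — and read off the corresponding parking function. Column $k$ empty means $k$ is missing; columns $k+1,\ldots,n$ nonempty means $k+1,\ldots,n$ all appear; hence $k$ is the largest missing number and the parking function lies in $\PF_{n,k}$. I would also need to handle the boundary convention that $D_{n,1}$ corresponds to $\sym_n$: when no number is missing at all, every column has exactly one up-step and the whole path is a staircase of $n$ ``up-right'' pairs, consistent with $k=1$ under a suitable reading of ``exactly $n-1$'' tail pairs plus the understanding that column $1$ is never empty for a genuine parking function.

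The main obstacle — really the only subtle point — is pinning down the exact meaning of ``ends in a string of exactly $n-k$ up-right pairs'' so that it is equivalent to ``column $k$ is empty and columns $k+1,\ldots,n$ are singletons,'' and in particular making the word ``exactly'' do the work of forcing $b_k = 0$ rather than just $b_k \leq 1$. Once that parsing is fixed, the argument is a short bookkeeping check using the height-above-diagonal inequality; I do not anticipate any deeper difficulty, since both directions are immediate consequences of the explicit description of the bijection in Proposition \ref{dyck}.
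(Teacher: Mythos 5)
Your proposal is correct and takes essentially the same route as the paper, which derives the theorem directly from the bijection of Proposition \ref{dyck}: the absence of $k$ and presence of $k+1,\ldots,n$ force an empty $k$th column followed by $n-k$ columns with exactly one up-step each. Your write-up merely makes explicit the partial-sum counting (and the converse reading of ``exactly'') that the paper treats as immediate.
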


There is a similar way to see the $\PF_{n,k}$ decomposition in the setting of rooted forests.

\begin{defin}
  A \emph{rooted forest} is a collection of trees in which each tree has one vertex designated as the root. A \emph{labeled rooted forest} is one in which each vertex is labeled with a distinct element of $\{1,2,\dots, n\}$, where $n$ is the total number of vertices.  We denote the set of labeled rooted forests on $n$ vertices as 
  $\RF_n$.
\end{defin}

There is a bijection from $\RF_n$ to $\PF_n$ that extends our decomposition to this setting in a natural way.
\cite{Pak}

\begin{defin}
  Let $v$ and $w$ be vertices in a rooted forest $f$. 
  \begin{enumerate}
  \item $h(v)$ is the length of the shortest path between $v$ and a root.
  \item If $v$ and $w$ are adjacent and $h(w) = h(v)+1$, then $v$ is the \emph{parent} of $w$ and $w$ is a
  \emph{child} of $v$.
  \item Two vertices are \emph{siblings} if they either share the same parent or are both roots.
  \item $\eta(v) = i$ if $v$ has the $i^{th}$ smallest label among its siblings. 
  \end{enumerate}
\end{defin}

\begin{figure}
	\begin{tikzpicture}
		\begin{scope}[shift={(-3.5,0)}]
			\coordinate (2) at (-0.75,0) {};
			\coordinate (1) at (-0.75,-1) {};
			\coordinate (3) at (0.75,0) {};
			\coordinate (4) at (0.75,-1) {};
			\coordinate (5) at (0,-2) {};
			\coordinate (6) at (0.75,-2) {};
			\coordinate (7) at (1.5,-2) {};
			\foreach \n in {1,...,4} {
				\draw (\n) node [dot,label=left:\n] {};
			}
			\foreach \n in {5,...,7} {
				\draw (\n) node [dot,label=below:\n] {};
			}
			\draw (2) -- (1);
			\draw (3) -- (4);
			\draw (4) -- (5);
			\draw (4) -- (6);
			\draw (4) -- (7);
		\end{scope}
		
		\begin{scope}[shift={(0.5,0)},scale=0.75]
			\coordinate (2) at (0,0) {};
			\coordinate (3) at (-0.75,-1) {};
			\coordinate (4) at (0.75,-1) {};
			\coordinate (1) at (0,-2) {};
			\coordinate (5) at (1.5,-2) {};
			\coordinate (6) at (0.75,-3) {};
			\coordinate (7) at (0,-4) {};
			\foreach \n in {1,...,7} {
				\draw (\n) node [dot,label=left:\n] {};
			}
			\draw (2) -- (3);
			\draw (3) -- (1);
			\draw (2) -- (4);
			\draw (4) -- (5);
			\draw (5) -- (6);
			\draw (6) -- (7);
		\end{scope}
		
		\begin{scope}[shift={(3.5,-3)},scale=0.5]
			\draw [help lines] (0,0) grid (7,7);
			\draw[line width=1pt] 
			(0,0) -- (0,1) -- (0,2) -- (1,2) -- (1,3) --
			(2,3) -- (3,3) -- (3,4) -- (4,4) -- (4,5) -- 
			(4,6) -- (4,7) -- (5,7) -- (6,7) -- (7,7);
			\node () at (0.5,0.5) {2};
			\node () at (0.5,1.5) {3};
			\node () at (1.5,2.5) {1};
			\node () at (3.5,3.5) {4};
			\node () at (4.5,4.5) {5};
			\node () at (4.5,5.5) {6};
			\node () at (4.5,6.5) {7};
		\end{scope}
	\end{tikzpicture}
	\caption{A labeled rooted forest with its corresponding binary rooted tree and Dyck path}
\end{figure}
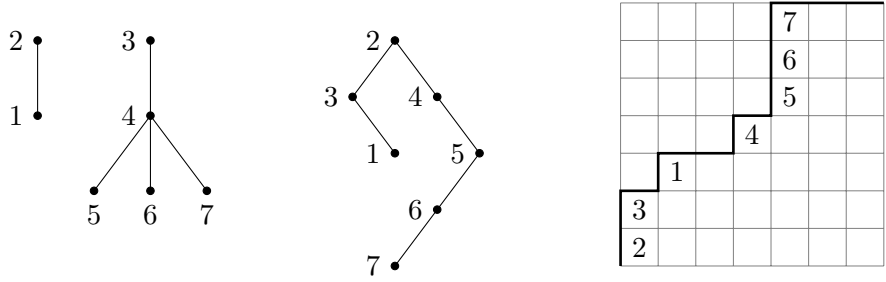

\begin{defin}
  \label{forestbijection}
  Let $f$ be a labeled rooted forest.  Define $\phi : \RF_n \rightarrow \PF_n$ as follows:
  \begin{enumerate}
    \item First, we construct a (labeled) binary rooted tree $b$ on $n$ vertices using the information from $f$. For 
    each vertex $v$ in $f$, we will define a corresponding vertex $v^\prime$ in $b$ with the same label.
    \begin{enumerate}
      \item If $v$ is the (unique) vertex in $f$ such that $h(v)= 0$ and $\eta(v) = 1$, then define 
      $v^\prime$ to be the root of $b$. 
      \item If $v$ has a sibling $w$ such that $\eta(w) = \eta(v)+1$, then $v^\prime$ will have a left-edge connecting
      $v^\prime$ to $w^\prime$ such that $w^\prime$ is a child of $v^\prime$. 
      \item If $v$ has any children, let $x$ be the unique child such that such that $\eta(x) = 1$. Then, $v^\prime$ 
      has a right-edge connecting $v^\prime$ to $x^\prime$ such that $x^\prime$ is a child of $v^\prime$.
    \end{enumerate}
  \item We now use $b$ to construct a labeled Dyck path.  Start at the root of $b$ and proceed counterclockwise around the outside of the tree, visiting each vertex twice. Each time we encounter a vertex that we have not already seen, we construct an up-step in our lattice path with the same label.  If we first encounter a vertex immediately before we encounter a left-edge, we construct  a right-step after encountering the vertex for the second time.  Otherwise, we construct a right-step immediately after recording the up-step corresponding to the vertex. 
  \item We can now use the bijection described in Proposition \ref{dyck} to find the corresponding parking function
  $\phi(f)$.
  \end{enumerate}
\end{defin}
We now define a way of partitioning these rooted forests that behaves nicely with respect to $\phi$.  

\begin{defin}
  Let $f_1$, $f_2$ be labeled rooted forests and define an equivalence relation by $f_1 \sim f_2$ if and only if there exists a graph isomorphism $\psi:f_1 \rightarrow f_2$ such that  for all vertices $v$ in $f_1$, $h(v) = h(\psi(v))$ and $\eta(v) = \eta(\psi(v))$. 
\end{defin}

\begin{prop}
  $\phi$ induces a one-to-one correspondence between equivalence classes of $\RF_n$ and equivalence classes of $\PF_n$ under permutation.
\end{prop}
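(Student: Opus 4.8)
The plan is to reduce everything to the single equivalence

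\begin{center}
($\star$)\qquad for $f_1,f_2\in\RF_n$, one has $f_1\sim f_2$ \emph{if and only if} $\phi(f_1)$ and $\phi(f_2)$ differ by a permutation of their entries.
\end{center}

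Once ($\star$) is established, the induced map $\overline\phi$ on equivalence classes is automatically well defined (the ``only if'' direction) and injective (the ``if'' direction), and it is surjective for free: since $\phi\colon\RF_n\to\PF_n$ is already a bijection, every permutation class in $\PF_n$ contains some $\phi(f)$, and that class is exactly $\overline\phi$ of the $\sim$-class of $f$. So the whole content of the proposition is ($\star$), which is a statement about how the three stages of Definition \ref{forestbijection} interact with the two equivalence relations.

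To prove ($\star$) I would peel the construction of $\phi$ back one stage at a time, keeping track only of ``shapes.'' Two parking functions are permutation-equivalent exactly when they have the same multiset of entries, and by Proposition \ref{dyck} this multiset is recorded precisely by the underlying (label-forgotten) \emph{shape} of the associated Dyck path: the value $i$ occurs $b_i$ times iff the path takes $b_i$ up-steps before its $i$th right-step. Next, step (2) of Definition \ref{forestbijection} is, after discarding the labels, nothing but the classical counterclockwise-traversal bijection between binary rooted trees on $n$ vertices and Dyck paths of length $2n$; the labels only decide which up-step carries which label and do not affect the shape. Hence the Dyck-path shape is determined by, and determines, the unlabeled shape of the binary tree $b$. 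Finally, step (1) of Definition \ref{forestbijection} builds $b$ from $f$ using \emph{only} the functions $h$ and $\eta$ --- it is a left-child/right-sibling-type correspondence in which $\eta$ supplies the plane ordering of each set of siblings --- so the unlabeled shape of $b$ is a function of the $\sim$-class of $f$ alone; and because this correspondence is invertible, sending binary-tree shapes bijectively onto isomorphism classes of rooted forests equipped with the sibling-orderings encoded by $\eta$, the shape of $b$ in turn recovers the $\sim$-class of $f$. Composing these three identifications gives $\phi(f_1)\approx\phi(f_2)\iff$ (same entry multiset) $\iff$ (same Dyck-path shape) $\iff$ (same binary-tree shape) $\iff f_1\sim f_2$, which is ($\star$).

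The step I expect to be the main obstacle is the careful matching of steps (1) and (2) of Definition \ref{forestbijection} with the standard forest--binary-tree and binary-tree--Dyck-path correspondences, and in particular verifying that passing to a $\sim$-class of $\RF_n$ is \emph{exactly} the same as remembering the underlying plane forest --- i.e.\ that graph isomorphism preserves $h$ automatically, so that $\sim$ amounts to a graph isomorphism respecting $\eta$, and that $\eta$ carries precisely the data that the left-child/right-sibling map consumes. Once that bookkeeping is pinned down, the ``only if'' direction of ($\star$) follows by observing that a $\sim$-isomorphism commutes with each stage of $\phi$ (producing the same Dyck-path shape, hence the same sorted parking function), and the ``if'' direction follows by running the same three correspondences in reverse starting from the Dyck-path shape. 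A useful consistency check throughout: there are $C_n$ permutation classes of parking functions (the primitive ones), $C_n$ unlabeled Dyck-path shapes, $C_n$ unlabeled binary trees, and $C_n$ plane forests on $n$ vertices, so all four sets have matching cardinality, exactly as ($\star$) predicts.
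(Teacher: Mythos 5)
Your proposal is correct and follows essentially the same route as the paper: the $\sim$-class data ($h$ and $\eta$) determines the binary tree up to relabeling, hence the unlabeled Dyck-path shape, hence the multiset of entries, i.e.\ the permutation class. The paper's printed proof records only this forward (well-definedness) direction, so your explicit handling of injectivity (each stage is invertible on shapes) and surjectivity (from bijectivity of $\phi$) simply completes the same argument; one small correction: a graph isomorphism does not preserve $h$ ``automatically''---preserving $h$ is precisely the extra requirement that roots map to roots---but this does not affect your identification of $\sim$-classes with plane forests.
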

\begin{proof}
  Let $f_1$ and $f_2$ be labeled rooted forests such that $f_1 \sim f_2$ and let $\psi : f_1 \rightarrow f_2$ be the associated graph isomorphism. Since $h$ and $\eta$ are preserved by the isomorphism, the corresponding binary trees $b_1$ and $b_2$ are identical up to relabeling. Hence, the corresponding labeled Dyck paths have the same shape and thus correspond to parking functions
  which are rearragements of one another.
\end{proof}

We can now realize our decomposition of parking functions in the setting of labeled rooted forests.  
\begin{defin}
  Let $\RF_{n,k}$ be the set of labeled rooted forests which correspond to the parking functions in $\PF_{n,k}$ via
  the bijection above, and let $\RF_{n,1}$
 be those which correspond to parking functions identified with $\sym_n$.
\end{defin}
First, we notice that the recurrence relation between $\PF_{n,k}$ and $\PF_{n+1,k}$ has an analogue in this setting.

\begin{defin}
  Each labeled rooted forest has a unique path from the root to a leaf such that every vertex $v_i$ on this path satisfies $\eta(v_i) = 1$. We denote this path as $P_f$ and its leaf as $v_f$.
\end{defin}

\begin{prop}
  \label{forestrecursion}
  The rooted forest equivalence classes in $\RF_{n+1,k}$ can be obtained from the equivalence classes in $\RF_{n,k}$ by adding a single edge and leaf to $v_f$ for each $f \in \RF_{n,k}$.  
\end{prop}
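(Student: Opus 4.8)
The plan is to transport the statement through the bijection $\phi$ of Definition~\ref{forestbijection} and reduce it to the parking‑function recursion of the Remark following Definition~\ref{parking}: at the level of equivalence classes, that recursion says exactly that appending an $n+1$ to a primitive parking function is a bijection from the primitive elements of $\PF_{n,k}$ onto those of $\PF_{n+1,k}$. So, writing $f^+$ for the labeled rooted forest obtained from $f$ by joining a new leaf labeled $n+1$ to $v_f$ by an edge, the whole content of the proposition will be the single identity
\[\phi(f^+)=(\phi(f),\,n+1),\]
together with the easy facts that $f\mapsto f^+$ respects the equivalence relation and carries $\RF_{n,k}$ into $\RF_{n+1,k}$.

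First I would record how $f^+$ alters the data feeding into $\phi$. Since $v_f$ is the leaf terminating $P_f$, it has no children in $f$, so in the binary tree $b$ of $f$ the vertex $v_f'$ has no right‑edge; moreover $v_f'$ is precisely the bottom of the rightmost branch of $b$, because the root of $b$ is the $h=0,\eta=1$ vertex and the right‑edges of $b$ are exactly the moves ``pass from a vertex to its $\eta=1$ child'' --- that is, the rightmost branch of $b$ is the image of $P_f$. Passing to $f^+$ gives $v_f$ the single child $n+1$ with $\eta(n+1)=1$, and leaves the height, the $\eta$‑value, the parent, the children, and the sibling set of every other vertex unchanged. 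Hence the binary tree $b^+$ of $f^+$ is obtained from $b$ by attaching one new leaf $(n+1)'$ as the right‑child of $v_f'$, and making no other change.

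Next I would track the effect on the labeled Dyck path. In the counterclockwise walk of Definition~\ref{forestbijection}, the up‑step and the right‑step recorded for a vertex $w'$ enclose the (fully traversed) left subtree of $w'$ and are immediately followed by the right subtree of $w'$; iterating this down the rightmost branch shows that the walk finishes with the left subtree of $v_f'$ and then the right‑step of $v_f'$, so that right‑step is the very last step of the Dyck path of $b$. Attaching $(n+1)'$ as the right‑child of $v_f'$ therefore inserts the two steps ``up‑step labeled $n+1$, then right‑step'' immediately after that final step --- i.e. it appends a single new last column containing exactly one up‑step, labeled $n+1$, while leaving every other labeled up‑step in its column. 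By Proposition~\ref{dyck} this is precisely $\phi(f^+)=(\phi(f),n+1)$, so the permutation class of $\phi(f^+)$ is the class of $\phi(f)$ with $n+1$ appended to the primitive representative.

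It remains to assemble the conclusion. A height‑ and $\eta$‑preserving isomorphism $f_1\to f_2$ extends to one $f_1^+\to f_2^+$, since the new leaves have $h=h(v_{f_i})+1$ and $\eta=1$ on both sides, so $f\mapsto f^+$ is well defined on equivalence classes; and $f\in\RF_{n,k}$ forces $f^+\in\RF_{n+1,k}$ because $(\phi(f),n+1)$ still omits $k$ and contains $k+1,\dots,n+1$. Invoking the proposition that $\phi$ induces a bijection between equivalence classes of $\RF$ and permutation classes of $\PF$, the map $f\mapsto f^+$ corresponds under it to the append‑$(n+1)$ map on primitive parking functions, which the Remark identifies as a bijection from the primitive elements of $\PF_{n,k}$ onto those of $\PF_{n+1,k}$; hence $f\mapsto f^+$ is a bijection from the equivalence classes of $\RF_{n,k}$ onto those of $\RF_{n+1,k}$, as claimed. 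I expect the only real obstacle to be the bookkeeping in the third paragraph: one must be certain that the right‑step belonging to the bottom vertex of the rightmost branch is placed last, so that attaching a right‑child there genuinely appends a ``$UR$'' at the very end of the path rather than splicing it into the interior (which would renumber all later columns and alter $\phi$ substantially); once this is pinned down the rest is formal.
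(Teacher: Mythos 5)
Your proposal is correct and follows essentially the same route as the paper: both arguments pass through the binary tree and labeled Dyck path, identify $P_f$ with the maximal right-edge branch from the root, and observe that modifying the forest at $v_f$ changes the Dyck path only by a final ``up-right'' pair. The only difference is direction and packaging --- the paper deletes the vertex labeled $n+1$ from a representative of a class in $\RF_{n+1,k}$, while you attach the new leaf to $f\in\RF_{n,k}$, prove $\phi(f^+)=(\phi(f),n+1)$, and get surjectivity from the class-level bijection with parking functions; your version is, if anything, slightly more careful about the case where $v_f'$ has a left subtree.
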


\begin{proof}
  Fix an equivalence class in $\RF_{n+1,k}$ and select a representative $f$ so that the vertex $v_f$ is labeled
  $n+1$. Notice that in the binary tree $b$ corresponding to $f$, $P_f$ becomes the maximal path of right-edges 
  from the root. Removing $v_f$ produces a labeled rooted forest $f^\prime \in \RF_n$ with the property
  that the associated binary tree $b^\prime$ differs only from $b$ in that its path of right-edges from the root has 
  one fewer
  vertex. In other words, the Dyck paths associated to $f$ and $f^\prime$ are the identical except for an additional
  ``up-right" pair at the end of the former, so by the bijection in Definition \ref{forestbijection}, we can see that
  $f^\prime \in \RF_{n,k}$.  Rearranging the labels for $f$ would change nothing except the labels on $f^\prime$,
  so any equivalence class in $\RF_{n+1,k}$ can be obtained from one in $\RF_{n,k}$ by the procedure above.
\end{proof}

We can use similar reasoning to determine which equivalence classes of rooted forests are contained in each $\RF_{n,k}$.

\begin{figure}
	\begin{tikzpicture}
		\begin{scope}[shift={(0,0)}]
			\node () at (-1.5,-0.5) {\Large $\RF_{3,1}$:};
			\node[draw,plate,minimum size=2.2cm] () at (0.75,-0.5) {};
			\coordinate (1) at (0.75,0.25) {};
			\coordinate (2) at (0.75,-0.5) {};
			\coordinate (3) at (0.75,-1.25) {};
			\draw[red] (1) -- (2);
			\draw[red] (2) -- (3);
			\foreach \n in {1,2,3} {
				\draw (\n) node [dot, label=left:\n] {};
			}
		\end{scope}
		\begin{scope}[shift={(0.375,-2.5)}]
			\node () at (-1.875,-0.5) {\Large $\RF_{3,2}$:};
			\node[draw,plate,minimum size=2.2cm] () at (0.375,-0.5) {};
			\coordinate (1) at (0,0) {};
			\coordinate (2) at (0.75,0) {};
			\coordinate (3) at (0,-1) {};
			\draw[red] (1) -- (3);
			\foreach \n in {1,2} {
				\draw (\n) node [dot, label=above:\n] {};
			}
			\draw (3) node [dot,label=left:3] {};
		\end{scope}
		\begin{scope}[shift={(0,-5)}]
			\node () at (-1.5,-0.5) {\Large $\RF_{3,3}$:};
			\node[draw,plate,minimum size=2.2cm] () at (0.75,-0.5) {};
			\coordinate (1) at (0,0) {};
			\coordinate (2) at (0.75,0) {};
			\coordinate (3) at (1.5,0) {};
			\foreach \n in {1,2,3} {
				\draw (\n) node [dot, label=above:\n] {};
			}
		\end{scope}
		\begin{scope}[shift={(3,-5)}]
			\node[draw,plate,minimum size=2.2cm] () at (0.375,-0.5) {};
			\coordinate (1) at (0,0) {};
			\coordinate (2) at (0.75,0) {};
			\coordinate (3) at (0.75,-1) {};
			\draw (2) -- (3);
			\foreach \n in {1,2} {
				\draw (\n) node [dot, label=above:\n] {};
			}
			\draw (3) node [dot,label=right:3] {};
		\end{scope}
		\begin{scope}[shift={(5,-5)}]
			\node[draw,plate,minimum size=2.2cm] () at (1,-0.5) {};
			\coordinate (1) at (1,0) {};
			\coordinate (2) at (0.5,-1) {};
			\coordinate (3) at (1.5,-1) {};
			\draw (1) -- (2);
			\draw (1) -- (3);
			\draw (1) node [dot,label=above:1] {};
			\draw (2) node [dot,label=left:2] {};
			\draw (3) node [dot,label=right:3] {};
		\end{scope}
	\end{tikzpicture}
	\caption{The equivalence classes of $\RF_3$ split according to the decomposition}
\end{figure}
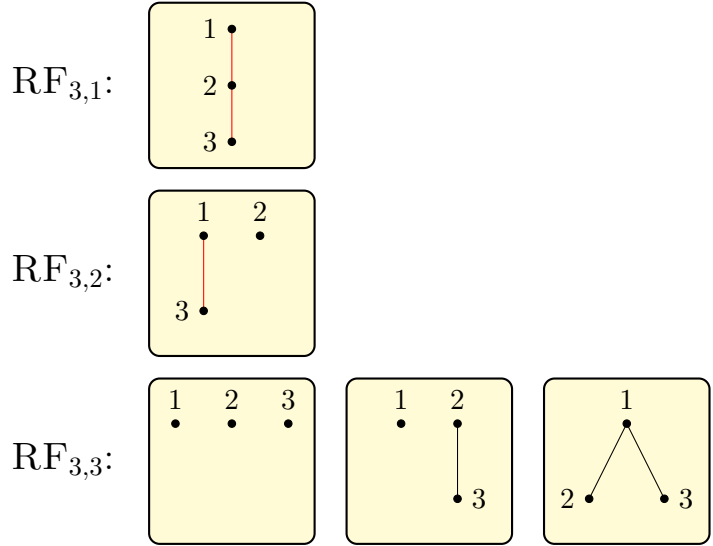

\begin{thm}
  For each $f \in \RF_n$, consider the longest sub-path of $P_f$ containing $v_f$ and not containing a root vertex such that none of the vertices on the path have siblings. Then $f \in \RF_{n,k}$ if and only if the number of vertices
  on this sub-path is $n-k$. 
\end{thm}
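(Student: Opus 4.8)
The plan is to transport the statement through the bijection $\phi$ into the binary tree $b$ of $f$, and then to induct on $n$ using the recursion of Proposition~\ref{forestrecursion}.

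First I would translate the quantity in the statement. By parts (a) and (c) of Definition~\ref{forestbijection}, the maximal path of right-edges out of the root of $b$ is $p_0', p_1', \dots, p_m'$, where $(p_0, \dots, p_m) = P_f$ and $p_m = v_f$; by part (b), the vertex $p_i'$ has a left child in $b$ exactly when $p_i$ has a sibling in $f$. Hence the number $s(f)$ of vertices on the sub-path in the theorem is the largest $s \le m$ for which $p_m, p_{m-1}, \dots, p_{m-s+1}$ are all sibling-free. Two reductions then simplify matters: $s(f)$ depends only on the $\sim$-class of $f$ (it is built from $h$, $\eta$, and adjacency), and since $\phi$ restricts to a bijection $\RF_{n,k} \to \PF_{n,k}$ and the sets $\RF_{n,1}, \dots, \RF_{n,n}$ partition $\RF_n$, it is enough to prove the single implication $f \in \RF_{n,k} \Rightarrow s(f) = n-k$. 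I would then induct on $n$, the case $n=1$ being immediate.

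For the inductive step, fix $f \in \RF_{n,k}$. If $k < n$, apply Proposition~\ref{forestrecursion} in reverse to the class of $f$: write $f = f' \cup \{\text{a new leaf attached to } v_{f'}\}$ with $f' \in \RF_{n-1,k}$. The new leaf is $v_f$; being the unique child of $v_{f'}$ it is sibling-free, and $P_f$ is $P_{f'}$ followed by $v_f$. Since attaching this leaf alters no vertex's sibling structure, a short case analysis---according to whether $v_{f'}$ is itself sibling-free, has a sibling, or is a root in $f'$---shows $s(f) = 1 + s(f')$, and the inductive hypothesis gives $s(f') = (n-1)-k$, hence $s(f) = n-k$. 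If instead $k = n$, then $\phi(f)$ omits $n$, so by Proposition~\ref{dyck} the associated Dyck path has no up-step in its last column; recalling that the up-steps of that path are recorded along the counterclockwise walk around $b$ and that a vertex's right-step follows its up-step immediately precisely when the vertex has no left child, one verifies that the last column is empty if and only if $v_f' = p_m'$ has a left child, i.e.\ if and only if $v_f$ has a sibling. So $v_f$ has a sibling, the sub-path of the theorem is empty, and $s(f) = 0 = n-k$.

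The $k=n$ case is the main obstacle: everything else is bookkeeping, but there one must track the contour walk on $b$ carefully to locate the final column. I expect the cleanest route is to prove the sharper statement that, unless $b$ is a single path of right-edges (equivalently $\phi(f) \in \sym_n$, the case $k=1$), the Dyck path of $\phi(f)$ ends in exactly $s(f)$ consecutive ``up-right'' pairs: scanning the right-spine $p_m', p_{m-1}', \dots$ backwards, each vertex without a left child contributes one more trailing ``up-right'' pair, while the first one with a left child produces two consecutive right-steps that terminate the run. That sharper statement, together with the description of $D_{n,k}$, yields the theorem directly and would make the induction unnecessary, though the inductive route is shorter to write down.
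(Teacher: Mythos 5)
Your proposal is correct and takes essentially the same route as the paper: reduce to the case $k=n$ using Proposition~\ref{forestrecursion} (you simply make the induction and the identity $s(f)=1+s(f')$ explicit where the paper leaves them implicit), and then settle $k=n$ by passing to the binary tree and noting that the last column of the Dyck path is empty exactly when $v_f$ has a sibling, which is the same trailing-right-steps analysis the paper carries out in the opposite direction. Your sharper observation that the Dyck path ends in exactly $s(f)$ ``up-right'' pairs is a nice strengthening that ties the result to the $D_{n,k}$ characterization, but it is a refinement of, not a departure from, the paper's argument.
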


\begin{proof}
  By Proposition \ref{forestrecursion}, it suffices to show this for $\RF_{n,n}$. To this end, suppose that $f\in \RF_n$
  and that $v_f$ has at least one sibling. Then, under the bijection given in Definition \ref{forestbijection}, the 
  vertex associated to $v_f$ in the corresponding binary tree has a child via a left-edge, but no right-edge. 
  It follows that the labeled Dyck path for this tree ends with (at least) two right-steps, and thus the parking function
  $\phi(f)$ does not contain $n$, hence $f\in \RF_{n,n}$. By reversing our steps through the bijection we can
  see that $v_f$ has a sibling if and only if $f\in \RF_{n,n}$ and our claim is proven.
\end{proof}

It seems that these are simply a few of many fruitful ways of studying our decomposition of $\PF_n$. We expect that there are several other settings or generalizations for these ideas which could give an interesting perspective for the structure of parking functions.

\section*{Acknowledgements} We would like to thank the National Science Foundation for their support via grant no. DMS-1358884 and the University of California, Santa Barbara for their support through the Research Experience for Undergraduates program. We would also like to thank Maribel Bueno Cachadina, Padraic Bartlett, and Jon McCammond for their guidance and helpful feedback. The second author would like to thank Andy Wilson for his talk at the 2015 AMS Spring Sectional Meeting \cite{andytalk}, which helped inspire the development of this REU project. More details for his use of parking functions with undesired spaces can be found in \cite{andythesis}.

\bibliographystyle{alpha}
\bibliography{paper_final}

\end{document}